\documentclass[11pt]{article}

\textheight 8.0in
\textwidth 5.0in

\title{Quantitative embedded contact homology}
\author{Michael Hutchings} \date{}

\usepackage{amssymb}
\usepackage{latexsym}
\usepackage{amsmath}
\usepackage{amsthm}
\usepackage{amscd}

\newcommand{\mc}[1]{{\mathcal #1}}

\numberwithin{equation}{section}

\newtheorem{theorem}{Theorem}[section]
\newtheorem{proposition}[theorem]{Proposition}
\newtheorem{corollary}[theorem]{Corollary}
\newtheorem{lemma}[theorem]{Lemma}
\newtheorem{lemma-definition}[theorem]{Lemma-Definition}

\newtheorem{conjecture}[theorem]{Conjecture}

\theoremstyle{definition}
\newtheorem{definition}[theorem]{Definition}
\newtheorem{remark}[theorem]{Remark}

\newtheorem{example}[theorem]{Example}

\newtheorem{update}[theorem]{Update}

\newcommand{\eqdef}{\;{:=}\;}

\renewcommand{\frak}{\mathfrak}

\newcommand{\C}{{\mathbb C}}

\newcommand{\R}{{\mathbb R}}
\newcommand{\N}{{\mathbb N}}
\newcommand{\Z}{{\mathbb Z}}
\newcommand{\op}{\operatorname}

\newcommand{\Hom}{\op{Hom}}
\newcommand{\Ker}{\op{Ker}}

\newcommand{\tensor}{\otimes}

\newcommand{\bpm}{\begin{pmatrix}}
\newcommand{\epm}{\end{pmatrix}}

\renewcommand{\epsilon}{\varepsilon}

\begin{document}

\setcounter{tocdepth}{2}

\maketitle

\begin{abstract}
  Define a ``Liouville domain'' to be a compact exact symplectic
  manifold with contact-type boundary.  We use embedded contact
  homology to assign to each four-dimensional Liouville domain (or
  subset thereof) a sequence of real numbers, which we call ``ECH
  capacities''.  The ECH capacities of a Liouville domain are defined
  in terms of the ``ECH spectrum'' of its boundary, which measures the
  amount of symplectic action needed to represent certain classes in
  embedded contact homology.  Using cobordism maps on embedded contact
  homology (defined in joint work with Taubes), we show that the ECH
  capacities are monotone with respect to symplectic embeddings.  We
  compute the ECH capacities of ellipsoids, polydisks, certain subsets
  of the cotangent bundle of $T^2$, and disjoint unions of examples
  for which the ECH capacities are known.  The resulting symplectic
  embedding obstructions are sharp in some interesting cases, for
  example for the problem of embedding an ellipsoid into a ball (as
  shown by McDuff-Schlenk) or embedding a disjoint union of balls into
  a ball.  We also state and present evidence for a conjecture under
  which the asymptotics of the ECH capacities of a Liouville domain
  recover its symplectic volume.
\end{abstract}

\section{Introduction}
\label{sec:intro}

Define a {\em Liouville domain\/} to be a compact symplectic manifold
$(X,\omega)$ such that $\omega$ is exact, and there exists a contact
form $\lambda$ on $\partial X$ with $d\lambda=\omega|_{\partial X}$.
In this paper we introduce a new obstruction to symplectically
embedding one four-dimensional Liouville domain into another, which
turns out to be sharp in some interesting cases.  For background on
symplectic embedding questions more generally we refer the reader to
\cite{chls} for an extensive discussion.

\subsection{The main theorem}

If $(X,\omega)$ is a four-dimensional Liouville domain, we use
embedded contact homology to define a sequence of real numbers
\[
0 = c_0(X,\omega) < c_1(X,\omega) \le c_2(X,\omega) \le \cdots \le \infty
\]
which we call the (distinguished) {\em ECH capacities\/} of
$(X,\omega)$.  The precise definition of these numbers is given in
\S\ref{sec:dbp}.  Our main result is:

\begin{theorem}
\label{thm:main}
Let $(X_0,\omega_0)$ and $(X_1,\omega_1)$ be four-dimensional
Liouville domains.  Suppose there is a symplectic embedding of
$(X_0,\omega_0)$ into the interior of $(X_1,\omega_1)$.  Then
\[
c_k(X_0,\omega_0) \le c_k(X_1,\omega_1)
\]
for each positive integer $k$, and the inequality is strict when
$c_k(X_0,\omega_0)<\infty$.
\end{theorem}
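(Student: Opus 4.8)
The plan is to deduce Theorem~\ref{thm:main} from the cobordism maps on action-filtered embedded contact homology constructed in the joint work with Taubes. Recall from \S\ref{sec:dbp} that, up to the limiting procedure over nondegenerate perturbations of the contact form described there, $c_k(X,\omega)$ is the infimum of those $L>0$ for which the empty orbit set $[\emptyset]$ lies in the image of the composition
\[
ECH^L(\partial X,\lambda)\xrightarrow{U^k}ECH^L(\partial X,\lambda)\xrightarrow{\iota_L}ECH(\partial X,\lambda),
\]
where $\iota_L$ is the map from filtered to unfiltered ECH. So it suffices to produce, from the given symplectic embedding and for each $L>0$, a homomorphism $\Phi_L\colon ECH^L(\partial X_1,\lambda_1)\to ECH^L(\partial X_0,\lambda_0)$ that commutes with the maps $\iota_L$ and with the $U$ maps, is compatible with the inclusion maps between different filtration levels, and sends $[\emptyset]$ to $[\emptyset]$. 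Granting such maps, if $[\emptyset]=\iota_L U^k\sigma$ with $\sigma\in ECH^L(\partial X_1,\lambda_1)$, then $[\emptyset]=\Phi([\emptyset])=\iota_L U^k\Phi_L(\sigma)$, so $c_k(X_0,\omega_0)\le L$; taking the infimum over such $L$, and then passing to the perturbation limit, gives $c_k(X_0,\omega_0)\le c_k(X_1,\omega_1)$.

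To construct the $\Phi_L$, I would view the complement $W\eqdef X_1\setminus\phi(\op{int}X_0)$ as a compact symplectic cobordism whose convex end is $\partial X_1$ and whose concave end is $\phi(\partial X_0)$: the Liouville vector field of $X_1$ points outward along $\partial X_1$, while the pushforward of the Liouville vector field of $X_0$ points out of $\phi(X_0)$, hence into $W$, along $\phi(\partial X_0)$, so $\phi(\partial X_0)$ is of contact type in $X_1$. Since $\omega_1$ is exact, $\omega_1|_W$ has a primitive; after a standard modification of this primitive supported in collar neighborhoods of the two ends (interpolating, via the two Liouville flows, to the Liouville contact form $\lambda_1$ near $\partial X_1$ and to a contact form for $\xi_0$ near $\phi(\partial X_0)$), $W$ meets the hypotheses of the Hutchings--Taubes cobordism map construction, and its completion carries the desired maps $\Phi_L$ on filtered ECH with exactly the properties invoked above: the energy bound on the holomorphic curves counted forces compatibility with $\iota_L$ and the filtration inclusions, and $U$-equivariance together with the preservation of $[\emptyset]$ are among the established features of these maps. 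When $\phi$ is not an exact embedding, the contact form obtained on $\phi(\partial X_0)$ differs from $\phi_*\lambda_0$ by the restriction of a closed one-form; this is handled by composing $\Phi_L$ with the corresponding deformation/continuation isomorphism on ECH, and the degenerate case of $\lambda_0$ and $\lambda_1$ is absorbed into the nondegenerate-perturbation limit through which $c_k$ is defined.

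For the strict inequality when $c_k(X_0,\omega_0)<\infty$, I would argue by rescaling. The set $\phi(X_0)$ is a compact subset of the open manifold $\op{int}X_1$, so flowing $\partial X_1$ inward under the Liouville vector field of $(X_1,\omega_1)$ for a sufficiently small time $t>0$ yields a Liouville subdomain $X_1^t\subset\op{int}X_1$ with $\phi(X_0)\subset\op{int}X_1^t$ and $(X_1^t,\omega_1)$ symplectomorphic to $(X_1,e^{-t}\omega_1)$; by the scaling behavior of the ECH spectrum, $c_k(X_1^t,\omega_1)=e^{-t}c_k(X_1,\omega_1)$. Applying the non-strict inequality already obtained to the embedding of $(X_0,\omega_0)$ into $\op{int}X_1^t$ gives $c_k(X_0,\omega_0)\le e^{-t}c_k(X_1,\omega_1)$, which is $<c_k(X_1,\omega_1)$ when $c_k(X_1,\omega_1)<\infty$ and which otherwise reads $c_k(X_0,\omega_0)<\infty=c_k(X_1,\omega_1)$; in either case the inequality is strict.

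The substantial difficulty is already packaged into the cited input: the construction of the filtered ECH cobordism maps and the verification of their naturality, $U$-equivariance, energy estimates, and behavior on the empty orbit set constitute the hard analytic content of the joint work with Taubes. Within the present argument the delicate points are the reduction of an arbitrary (not necessarily exact) symplectic embedding to the exact-cobordism framework---choosing the primitive on $W$ near the concave end and, if needed, invoking a deformation isomorphism to return to the Liouville contact forms $\lambda_0$, $\lambda_1$---and checking that all of these constructions pass to the nondegenerate-perturbation limit through which $c_k$ is defined; by comparison, the algebra with the $U$ map and the rescaling argument for strictness are routine.
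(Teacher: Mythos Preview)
Your argument follows the same route as the paper: view the complement as a symplectic cobordism, invoke the filtered ECH cobordism maps (with their compatibility with $\iota_L$, their $U$-equivariance, and $\Phi[\emptyset]=[\emptyset]$), and deduce the inequality; then obtain strictness by a rescaling. Two small points where the paper's treatment is cleaner and where your wording needs adjustment:

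\emph{The cobordism.} You work to modify a primitive on $W$ so that the cobordism becomes exact near both ends, and then invoke a continuation isomorphism to handle non-exact embeddings. The paper avoids all of this by introducing the notion of \emph{weakly exact} symplectic cobordism (Definition~\ref{def:wesc}): one only needs $\omega_1|_W$ exact and $\omega_1|_{\partial W}=d\lambda_\pm$, which is automatic here since $\omega_1$ is exact on $X_1$ and both boundary pieces already carry the given contact forms. Theorem~\ref{thm:cob} supplies the cobordism maps in this generality directly, and Lemma~\ref{lem:dlambda} shows that the choice of contact form with $d\lambda=\omega|_{\partial X}$ does not affect $c_k$; no interpolation or deformation isomorphism is needed. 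Note also that the $U$-equivariance in Theorem~\ref{thm:cob}(d) is only asserted for the direct limit $\Phi$, not for each $\Phi^L$; your computation goes through with that weaker input (push $U^k$ past $\iota_L$ first), so you should phrase it that way. Finally, in the disconnected case one must use all products $U_{i_1}\cdots U_{i_k}$ and the fact that every component of $W$ has at least one convex boundary component, as in Proposition~\ref{prop:cobtilde}.

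\emph{Strictness.} The paper enlarges $X_0$ by attaching a thin piece $[0,\epsilon]\times Y_0$ of its symplectization completion, whose boundary carries $e^\epsilon\lambda_0$; scaling of the ECH spectrum then gives $e^\epsilon c_k(X_0,\omega_0)\le c_k(X_1,\omega_1)$. Your dual version (shrinking $X_1$) is fine, but the claim that $(X_1^t,\omega_1)$ is symplectomorphic to $(X_1,e^{-t}\omega_1)$ presumes a global Liouville vector field on $X_1$, which the paper's definition of Liouville domain does not provide. You do not need that symplectomorphism: for small $t$ the collar near $\partial X_1$ suffices to define $X_1^t$, its boundary is $(Y_1,e^{-t}\lambda_1)$, and hence $c_k(X_1^t,\omega_1)=e^{-t}c_k(X_1,\omega_1)$ by the scaling property~\eqref{eqn:capacityScaling}.
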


Note that in Theorem~\ref{thm:main}, the four-manifolds $X_0$ and
$X_1$ and their boundaries are not assumed to be connected.  The proof
of Theorem~\ref{thm:main} uses cobordism maps on embedded contact
homology induced by ``weakly exact symplectic cobordisms'', which are
defined using Seiberg-Witten theory by the construction in
\cite{cc1,cc2}.

\subsection{Examples of ECH capacities}
\label{sec:eechc}

To see what Theorem~\ref{thm:main} tells us, we now present some
computations of ECH capacities.  Given positive real numbers $a,b$,
define the ellipsoid
\begin{equation}
\label{eqn:ellipsoid}
E(a,b) \eqdef \left\{(z_1,z_2)\in\C^2 \;\bigg|\; \frac{\pi|z_1|^2}{a} +
  \frac{\pi|z_2|^2}{b}\le 1\right\}.
\end{equation}
In particular, define the ball
\[
B(a) \eqdef E(a,a).
\]
Also define the polydisk
\begin{equation}
\label{eqn:polydisk}
P(a,b)\eqdef \left\{(z_1,z_2)\in\C^2 \;\big|\; \pi|z_1|^2 \le a, \;\;
  \pi|z_2|^2 \le b\right\}.
\end{equation}
All of these examples are given the standard symplectic form
$\omega=\sum_{i=1}^2 dx_i\,dy_i$ on $\R^4=\C^2$.  The first two are
Liouville domains, because the $1$-form
\begin{equation}
\label{eqn:Liouville}
\lambda=\frac{1}{2}\sum_{i=1}^2(x_i\,dy_i-y_i\,dx_i)
\end{equation}
restricts to a contact form on the boundary of any smooth star-shaped
domain.  The polydisk is not quite a Liouville domain because its
boundary is only piecewise smooth.  However, as explained in
\S\ref{sec:mgd}, the definition of ECH capacities and
Theorem~\ref{thm:main} extend to arbitrary  subsets of
symplectic four-manifolds.  (One expects to still get decent
symplectic embedding obstructions for examples such as polydisks that
can be approximated by Liouville domains.)

To describe the ECH capacities of the ellipsoid, let $(a,b)_k$ denote
the $k^{th}$ smallest entry in the matrix of real numbers
$(am+bn)_{m,n\in\N}$.  We then have:

\begin{proposition}
\label{prop:ellipsoid}
The ECH capacities of an ellipsoid are given by
\[
c_k(E(a,b)) = (a,b)_{k+1}.
\]
\end{proposition}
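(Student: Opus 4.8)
The plan is to compute the embedded contact homology of $\partial E(a,b)$ together with its symplectic action filtration, and then to read off the ECH capacities from the definition in \S\ref{sec:dbp}. I would first handle the case in which $a/b$ is irrational. There the restriction of the Liouville form \eqref{eqn:Liouville} to $\partial E(a,b)$ is a nondegenerate contact form on $S^3$ whose Reeb vector field has exactly two simple closed orbits: $\gamma_1$, the circle $\{z_2=0\}\cap\partial E(a,b)$ of symplectic action $a$, and $\gamma_2$, the circle $\{z_1=0\}\cap\partial E(a,b)$ of symplectic action $b$; both are elliptic and nondegenerate. Hence the ECH chain complex of $\partial E(a,b)$ has exactly one generator for each pair $(m,n)\in\N^2$, namely the orbit set $\gamma_1^m\gamma_2^n$, whose symplectic action is $ma+nb$, with $(0,0)$ giving the empty orbit set.

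The heart of the matter, and the step I expect to be the main obstacle, is the claim that the ECH index $I(\gamma_1^m\gamma_2^n)$ is even for every $(m,n)$, that distinct pairs give distinct indices, and that these indices realize each nonnegative even integer exactly once, ordered the same way as the action: $I(\gamma_1^m\gamma_2^n)<I(\gamma_1^{m'}\gamma_2^{n'})$ exactly when $ma+nb<m'a+n'b$. Evenness is automatic because $\gamma_1$ and $\gamma_2$ are elliptic; for the rest I would use the combinatorial formula for the ECH index of an orbit set on the boundary of an ellipsoid, computing the Conley-Zehnder indices of the iterates $\gamma_1^j$ and $\gamma_2^j$ together with the relative intersection terms, which reduces the assertion to a count of lattice points in triangles with two vertices on the coordinate axes. (The ordering part also follows more softly: Taubes's isomorphism with Seiberg-Witten Floer cohomology gives that $ECH_*(S^3)$ has rank one in each grading $0,2,4,\dots$ and vanishes otherwise, so with evenness in hand the differential must vanish and there is exactly one generator in each even grading; and since the $U$ map realizing the isomorphism $ECH_{2k}\to ECH_{2k-2}$ is computed on the chain level by counting holomorphic curves through a point, each of which strictly decreases symplectic action, the generator in grading $2k$ has strictly larger action than the one in grading $2k-2$, forcing grading and action to induce the same order.)

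Granting this, $ECH_*(\partial E(a,b))$ has rank one in each even grading and vanishing differential, so the only cycles in grading $2k$ are the integer multiples of a single generator $\gamma_1^m\gamma_2^n$ for which $ma+nb$ is the $(k+1)$-st smallest element of $\{\,m'a+n'b : (m',n')\in\N^2\,\}$, that is, $ma+nb=(a,b)_{k+1}$. Since the differential vanishes this generator is the unique cycle representing a generator of $ECH_{2k}$, so the least action filtration level at which a generator of $ECH_{2k}$ appears is $(a,b)_{k+1}$, and the definition of the ECH capacities in \S\ref{sec:dbp} gives $c_k(E(a,b))=(a,b)_{k+1}$ whenever $a/b$ is irrational.

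Finally I would remove the irrationality hypothesis by monotonicity and continuity. If $a<a'$ and $b<b'$ then $E(a,b)\subset\op{int}E(a',b')$, so Theorem~\ref{thm:main} gives $c_k(E(a,b))\le c_k(E(a',b'))$; hence $c_k(E(a,b))$ is nondecreasing in $a$ and in $b$. For arbitrary $a,b>0$, choose pairs $(a_i,b_i)$ and $(a_i',b_i')$ with irrational ratios such that $(a_i,b_i)\to(a,b)$ with $a_i<a$, $b_i<b$, and $(a_i',b_i')\to(a,b)$ with $a_i'>a$, $b_i'>b$. Then
\[
(a_i,b_i)_{k+1}=c_k(E(a_i,b_i))\le c_k(E(a,b))\le c_k(E(a_i',b_i'))=(a_i',b_i')_{k+1},
\]
and since $(a,b)\mapsto(a,b)_{k+1}$ is continuous---locally it is an order statistic of finitely many affine functions of $(a,b)$---both ends converge to $(a,b)_{k+1}$, and squeezing finishes the proof of Proposition~\ref{prop:ellipsoid}.
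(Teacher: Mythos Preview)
Your proposal is correct and uses the same ingredients as the paper, though organized differently. The paper first computes the \emph{full} ECH spectrum $\widetilde{c}_k(E(a,b))=(a,b)_k$ (Proposition~\ref{prop:ellipsoidCapacity}), using only that all generators have even grading so the differential vanishes; it then invokes the general Proposition~\ref{prop:dss3} (whose proof packages the Seiberg--Witten computation of $ECH(S^3)$, the fact that $U$ strictly decreases action, and the action-minimizing basis Lemma~\ref{lem:amb}) to convert $\widetilde{c}_{k+1}$ into $c_k$. You instead argue directly that grading order equals action order and read off $c_k$; your ``soft'' argument is essentially the content of the proof of Proposition~\ref{prop:dss3} specialized to this example. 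One step you should make explicit: the definition of $c_k$ in \S\ref{sec:dbp} minimizes $c_\sigma$ over \emph{all} classes $\sigma$ with $U^k\sigma=[\emptyset]$, not just the grading-$2k$ generator. Since $U$ is an isomorphism in positive even gradings, any such $\sigma$ must be that generator plus terms of strictly lower grading, and by Remark~\ref{rem:careful} the value $c_\sigma$ of a sum of classes in distinct gradings equals the maximum of the summands' values, so the minimum is indeed achieved at the pure grading-$2k$ class. The rational case via squeezing is handled the same way as in the paper (which uses Proposition~\ref{prop:embeddingObstruction} on the full capacities rather than Theorem~\ref{thm:main}, but either suffices).
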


Note that in the definition of ``$k^{th}$ smallest'' we count with
repetitions.  For example:

\begin{corollary}
\label{cor:ball}
The ECH capacities of a ball are given by
\[
c_k(B(a)) = da,
\]
where $d$ is the unique nonnegative integer such that
\[
\frac{d^2+d}{2} \le k \le \frac{d^2+3d}{2}.
\]
\end{corollary}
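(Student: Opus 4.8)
The plan is to deduce this purely arithmetically from Proposition~\ref{prop:ellipsoid}. Setting $a=b$ in that proposition gives $c_k(B(a)) = (a,a)_{k+1}$, the $(k+1)^{st}$ smallest entry (counted with multiplicity) of the matrix $(a(m+n))_{m,n\in\N}$. The first step is a multiplicity count: since $\N$ includes $0$ here (this is forced by $c_0(E(a,b))=(a,b)_1=0$), for each nonnegative integer $\ell$ there are exactly $\ell+1$ pairs $(m,n)\in\N^2$ with $m+n=\ell$. Hence the value $\ell a$ occurs in the matrix with multiplicity $\ell+1$, and the total number of entries of the form $ja$ with $0\le j\le\ell$ is
\[
\sum_{j=0}^{\ell}(j+1) = \frac{(\ell+1)(\ell+2)}{2}.
\]

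Second, I would translate this count into a statement about ranks. The $(k+1)^{st}$ smallest entry equals $\ell a$ exactly when at least $k+1$ entries are $\le \ell a$ while fewer than $k+1$ entries are $\le(\ell-1)a$, i.e. when
\[
\frac{\ell(\ell+1)}{2} < k+1 \le \frac{(\ell+1)(\ell+2)}{2}.
\]
Because $k$ is an integer, this is equivalent to $\tfrac{\ell^2+\ell}{2}\le k\le\tfrac{\ell^2+3\ell}{2}$, and writing $d=\ell$ yields both the formula $c_k(B(a))=da$ and the inequality in the statement.

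Third, I would verify that $d$ is genuinely \emph{unique}, i.e. that the intervals $\big[\tfrac{d^2+d}{2},\tfrac{d^2+3d}{2}\big]$, $d=0,1,2,\dots$, partition the nonnegative integers. This is immediate from the endpoint arithmetic: the right endpoint of the $d^{th}$ interval is $\tfrac{d^2+3d}{2}$, and the left endpoint of the $(d+1)^{st}$ is $\tfrac{(d+1)^2+(d+1)}{2}=\tfrac{d^2+3d}{2}+1$, so consecutive intervals abut with neither overlap nor gap, and the $d=0$ interval begins at $0$.

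I do not expect a real obstacle here; the only point requiring care is the indexing conventions flagged in the paper just before the corollary — that $\N$ contains $0$ and that ``$k^{th}$ smallest'' counts with repetitions — since the multiplicity bookkeeping in the first two steps depends entirely on these. Once they are pinned down, the corollary is a direct consequence of Proposition~\ref{prop:ellipsoid}.
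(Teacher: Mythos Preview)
Your proposal is correct and matches the paper's approach: the paper presents this corollary as an immediate arithmetic specialization of Proposition~\ref{prop:ellipsoid} without further argument, and your multiplicity count for $(a,a)_{k+1}$ is exactly the computation implicit in that presentation.
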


Next we have:

\begin{theorem}
\label{thm:polydisk}
The ECH capacities of a polydisk are given by
\[
c_k(P(a,b)) = \min\left\{am+bn\;\big|\; (m,n)\in\N^2, \;
  (m+1)(n+1)\ge k+1\right\}.
\]
\end{theorem}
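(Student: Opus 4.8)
The plan is to compute $c_k(P(a,b))$ by combining the monotonicity of ECH capacities under symplectic embeddings (Theorem~\ref{thm:main}) with the already-established formula for ellipsoids (Proposition~\ref{prop:ellipsoid}), using the fact that polydisks are sandwiched between ellipsoids in a controlled way. The key geometric input is that $E(a,b) \subset P(a,b)$ always, and, more importantly, that $P(a,b)$ admits symplectic embeddings into ellipsoids $E(c,d)$ for a precise family of parameters — this is where one invokes the folding constructions of Schlenk and related work (the standard symplectic embedding $P(a,b)\hookrightarrow E(c,d)$ exists whenever, roughly, $c\ge a+b$ and the volume/shape constraints are met; the relevant sharp statement is that $P(a,b)$ embeds into $E(\lambda a,\lambda b)$ only when $\lambda$ is large, but we need only the "easy" direction). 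From these embeddings and Theorem~\ref{thm:main} one gets $c_k(P(a,b)) \le c_k(E(c,d))$ and $c_k(E(a,b))\le c_k(P(a,b))$, and the hope is that these bounds pinch to the claimed value in the limit.

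First I would fix notation: write $N(a,b)_k$ for the quantity $\min\{am+bn : (m,n)\in\N^2,\ (m+1)(n+1)\ge k+1\}$ claimed to equal $c_k(P(a,b))$. I would first handle the \emph{lower bound} $c_k(P(a,b))\ge N(a,b)_k$. Since $P(a,b)$ is not smooth, one works with the extension of ECH capacities to arbitrary subsets from \S\ref{sec:mgd}: approximate $P(a,b)$ from inside by smooth star-shaped Liouville domains $X_\eps\nearrow P(a,b)$, or directly approximate the corner by a smoothing whose boundary Reeb dynamics is understood. The ECH generators of (a perturbation of) the boundary of a polydisk are indexed by pairs $(m,n)$ coming from the two "edge" circles and their iterates together with the torus of closed orbits on the corner; the grading and the action are computed so that the class of ECH grading $2k$ is represented with minimal action exactly $N(a,b)_k$. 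Concretely I expect the filtered ECH of $\partial P(a,b)$ to be computable by a direct Morse–Bott analysis — perturb the corner torus, list the orbit sets, compute the ECH index via the formula combining Conley–Zehnder indices, relative first Chern number, and writhe/self-linking, and read off that $c_k$ is the smallest action at which the degree-$2k$ generator appears. This gives the $\ge$ direction, and dually, embedding a slightly-smaller smooth Liouville domain into $P(a,b)$ and using monotonicity propagates the bound correctly.

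For the \emph{upper bound} $c_k(P(a,b))\le N(a,b)_k$, I would use symplectic embeddings $P(a,b)\hookrightarrow$ (interior of) an ellipsoid and Proposition~\ref{prop:ellipsoid}, optimizing over the family of ellipsoids into which $P(a,b)$ embeds. Alternatively — and this is probably cleaner — one can run the same Morse–Bott ECH computation of $\partial P(a,b)$ directly and exhibit, for each $k$, an actual ECH cycle in grading $2k$ of action $N(a,b)_k$, so that the lower and upper bounds are the same computation read in two directions; the embedding-plus-ellipsoid route then serves only as a consistency check. Either way, the arithmetic identity to verify is that the minimum of $am+bn$ over the "staircase region" $(m+1)(n+1)\ge k+1$ equals the $(2k)$-graded minimal action, which is an elementary but slightly fiddly lattice-point count (note the shift by $1$ in both $m,n$ and in $k$, matching $c_0=0$).

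The main obstacle I expect is the corner: since $\partial P(a,b)$ is only piecewise smooth, the ECH of its boundary is not literally defined, and one must make the Morse–Bott/limit argument of \S\ref{sec:mgd} fully rigorous — carefully perturbing the corner torus to a nondegenerate contact form, controlling \emph{all} ECH generators and their actions uniformly as the perturbation parameter shrinks, and showing no "extra" low-action generators in grading $2k$ appear that would push $c_k$ below $N(a,b)_k$, nor that the expected generator disappears. Controlling the index and action bookkeeping through this degeneration, and knowing the differential well enough to be sure the relevant class is nonzero in filtered ECH, is the technical heart of the argument; everything else reduces to the monotonicity theorem and the ellipsoid formula already in hand.
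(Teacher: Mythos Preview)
Your ellipsoid-sandwich strategy has a real gap. The inclusion $E(a,b)\subset P(a,b)$ gives only $c_k(E(a,b))\le c_k(P(a,b))$, and this lower bound is \emph{not} sharp: already for $a=b=1$ one has $c_k(E(1,1))\sim\sqrt{2k}$ while the claimed $N(1,1)_k\sim 2\sqrt{k}$, reflecting $\op{vol}(P(1,1))=2\op{vol}(E(1,1))$. The upper bound via embeddings into ellipsoids fails for the same reason, and in fact fails even before asymptotics: the paper itself notes (Remark after Proposition~\ref{prop:pb}) that $P(1,1)$ and $E(1,2)$ have identical ECH capacities, yet Ekeland--Hofer capacities obstruct any symplectic embedding $P(1,1)\hookrightarrow E(a,2a)$ for $a<3/2$. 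So there is no family of ellipsoids whose capacities ``pinch'' to $N(a,b)_k$ through embeddings of $P(a,b)$. Your fallback --- computing the filtered ECH of a smoothed $\partial P(a,b)$ directly --- is a valid route (the paper says as much at the start of \S\ref{sec:pd}), but you have only named it, not carried it out; the index formula, the differential, and the $U$ map all need to be nailed down, which is a substantial computation in the style of \cite{t3}.

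The paper avoids this computation entirely by a different sandwich. Instead of ellipsoids, it squeezes $P(a,b)$ between the $T^*T^2$-domains $T(a\pm\epsilon,b\pm\epsilon)=T_{\|\cdot\|^*}$ for the $\ell^1$-type norm $\|(q_1,q_2)\|=\tfrac{a}{2}|q_1|+\tfrac{b}{2}|q_2|$: there are explicit area-preserving embeddings in both directions between a disk and an annulus that give $T(a-\epsilon,b-\epsilon)\hookrightarrow P(a,b)\hookrightarrow T(a+\epsilon,b+\epsilon)$. Monotonicity then gives $c_k(P(a,b))=c_k(T(a,b))$, and Theorem~\ref{thm:dual} (already proved from the ECH of $T^3$) expresses the latter as a minimum over convex lattice polygons. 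The remaining step is a short combinatorial identity showing that minimum equals $N(a,b)_k$. The point you are missing is this $T^*T^2$ model: it has the same volume as $P(a,b)$ and the same ECH capacities, so the sandwich is tight, whereas no ellipsoid does.
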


Finally, to compute the ECH capacities of a disjoint union of examples
whose ECH capacities are known, one can use:

\begin{proposition}
\label{prop:du}
Let $(X_i,\omega_i)$ be four-dimensional Liouville domains for
$i=1,\ldots,n$.  Then
\[
c_k\left(\coprod_{i=1}^n(X_i,\omega_i)\right) = \max
\left\{\sum_{i=1}^nc_{k_i}\left(X_i,\omega_i\right) \;\bigg|\;
\sum_{i=1}^n k_i=k\right\}.
\]
\end{proposition}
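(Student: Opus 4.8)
The plan is to reduce to the case $n=2$ and then use the fact that the embedded contact homology of a disjoint union of contact three-manifolds is the tensor product of their embedded contact homologies, compatibly with the symplectic action filtration. For the reduction, note that $\coprod_{i=1}^n(X_i,\omega_i)=\left(\coprod_{i=1}^{n-1}(X_i,\omega_i)\right)\sqcup(X_n,\omega_n)$, while
\[
\max\left\{\sum_{i=1}^n c_{k_i}(X_i,\omega_i)\;\Big|\;\sum_{i=1}^n k_i=k\right\}=\max_{k_n}\left(\max\left\{\sum_{i=1}^{n-1}c_{k_i}(X_i,\omega_i)\;\Big|\;\sum_{i<n}k_i=k-k_n\right\}+c_{k_n}(X_n,\omega_n)\right);
\]
so by induction it is enough to treat $n=2$. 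Write $Y_i=\partial X_i$, with contact form $\lambda_i$. Perturbing the $\lambda_i$ to be nondegenerate, and using that the ECH capacities of a Liouville domain are determined by such perturbations (see \S\ref{sec:dbp}), I may assume $\lambda_1$ and $\lambda_2$ are nondegenerate, so that the disjoint union $\lambda_1\sqcup\lambda_2$ is a nondegenerate contact form on $\partial(X_1\sqcup X_2)$.

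The first substantive step is a Künneth formula at the chain level. A generator of $ECC_*(Y_1\sqcup Y_2,\lambda_1\sqcup\lambda_2)$ is a pair $(\alpha_1,\alpha_2)$ of ECH generators for $(Y_1,\lambda_1)$ and $(Y_2,\lambda_2)$; its symplectic action is $\mc{A}(\alpha_1)+\mc{A}(\alpha_2)$, and its ECH index is the sum of the two ECH indices. For a generic almost complex structure of split form $J_1\sqcup J_2$, a holomorphic current in $\R\times(Y_1\sqcup Y_2)$ decomposes into its parts in $\R\times Y_1$ and in $\R\times Y_2$, and by the basic properties of the ECH index these parts have nonnegative ECH index with sum equal to that of the whole current; hence among the currents of ECH index one, one part is counted by the differential of the corresponding factor and the other is a union of $\R$-invariant cylinders. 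So, using $\Z/2$ coefficients to avoid signs and $\op{Tor}$ terms (which does not change the ECH capacities),
\[
ECC_*(Y_1\sqcup Y_2,\lambda_1\sqcup\lambda_2)\;=\;ECC_*(Y_1,\lambda_1)\otimes ECC_*(Y_2,\lambda_2),
\]
compatibly with the ECH index grading and the homology-class grading, and so that the action filtration on the left is the total-action filtration on the right. Applying the structure theorem for filtered complexes over a field that are finite-dimensional in each degree --- each is a direct sum of pieces of two kinds, a single generator with vanishing differential (an ``infinite bar'', contributing a homology class whose spectral invariant equals its action) and a pair $a,b$ with $\partial a=b$ (a ``finite bar'') --- one finds that the infinite bars of a tensor product are exactly the tensor products of infinite bars, with actions added. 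With $c_\sigma\eqdef\inf\{L\mid\sigma\in\op{im}(ECH^L_*\to ECH_*)\}$ denoting the spectral invariant, this yields: (i) $c_{\sigma_1\otimes\sigma_2}=c_{\sigma_1}+c_{\sigma_2}$ for $\sigma_i\in ECH_*(Y_i)$; and (ii) if $\sigma=\sum_j\tau_j$ with the $\tau_j$ supported on disjoint collections of infinite bars, then $c_\sigma=\max_j c_{\tau_j}$.

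Next I would identify the distinguished class. Let $\sigma_k(X)\in ECH_{2k}(\partial X)$ be the class of \S\ref{sec:dbp} whose spectral invariant is $c_k(X,\omega)$, characterized recursively by $\sigma_0=[\emptyset]$ and $U\sigma_k=\sigma_{k-1}$. I claim that, under the isomorphism above,
\[
\sigma_k(X_1\sqcup X_2)\;=\;\sum_{k_1+k_2=k}\sigma_{k_1}(X_1)\otimes\sigma_{k_2}(X_2).
\]
For $k=0$ this reads $[\emptyset]=[\emptyset]\otimes[\emptyset]$. For the inductive step, a holomorphic current through a generic point of $Y_1$ has trivial part in $\R\times Y_2$, so the $U$ map on $ECH_*(Y_1\sqcup Y_2)$ defined using such a point is $U_{Y_1}\otimes 1$; applying it to the right-hand side, and using $U_{Y_1}\sigma_{k_1}(X_1)=\sigma_{k_1-1}(X_1)$ together with $U_{Y_1}\sigma_0(X_1)=0$ (as $ECH_*$ of a Liouville filling vanishes in negative degrees), gives $\sum_{k_1+k_2=k-1}\sigma_{k_1}(X_1)\otimes\sigma_{k_2}(X_2)$, so the claim follows by induction from the characterization of the $\sigma_k$. (Using a generic point of $Y_2$ instead gives $1\otimes U_{Y_2}$; that the two operators agree on these classes is exactly why the ECH capacities make sense for a disconnected boundary.) Finally, the summands $\sigma_{k_1}(X_1)\otimes\sigma_{k_2}(X_2)$ with $k_1+k_2=k$ lie in the distinct bidegree pieces $ECH_{2k_1}(Y_1)\otimes ECH_{2k_2}(Y_2)$ of $ECH_{2k}(Y_1\sqcup Y_2)$ and so are supported on disjoint collections of infinite bars; combining the displayed identity with (ii) and then (i),
\[
c_k(X_1\sqcup X_2)=c_{\sigma_k(X_1\sqcup X_2)}=\max_{k_1+k_2=k}c_{\sigma_{k_1}(X_1)\otimes\sigma_{k_2}(X_2)}=\max_{k_1+k_2=k}\left(c_{k_1}(X_1,\omega_1)+c_{k_2}(X_2,\omega_2)\right),
\]
which is the case $n=2$; the case in which some $c_{k_i}$ equals $\infty$ is the assertion that both sides are infinite, proved the same way.

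I expect the main obstacle to be the passage from the chain-level identity to the precise statements (i) and (ii) about filtered homology --- especially the inequality $c_{\sigma_1\otimes\sigma_2}\ge c_{\sigma_1}+c_{\sigma_2}$, which genuinely needs field coefficients and the barcode normal form --- together with verifying that the tensor-product isomorphism respects the grading and homology-class conventions used to define the $\sigma_k$ in \S\ref{sec:dbp}. The remaining ingredients --- the reduction to $n=2$, the nondegeneracy perturbations and their limits, and the handling of infinite values --- are routine.
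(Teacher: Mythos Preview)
Your argument rests on a misreading of \S\ref{sec:dbp}. The capacity $c_k(Y,\lambda)$ is not the spectral invariant of a single distinguished class $\sigma_k$; it is defined as the \emph{minimum} of $c_\sigma(Y,\lambda)$ over all $\sigma\in ECH(Y,\lambda,0)$ with $U^k\sigma=[\emptyset]$ (and, for disconnected $Y$ with components $Y_1,\ldots,Y_n$, over all $\sigma$ with $U_{i_1}\cdots U_{i_k}\sigma=[\emptyset]$ for \emph{every} sequence $i_1,\ldots,i_k\in\{1,\ldots,n\}$). No recursion ``$\sigma_0=[\emptyset]$, $U\sigma_k=\sigma_{k-1}$'' characterizes a unique class: $ECH(Y,\lambda,0)$ in general carries only a relative $\Z/d$-grading, and even when an absolute $\Z$-grading exists it need not be one-dimensional in each degree --- for $Y=T^3$ it has rank~$3$ in every nonnegative degree (see \S\ref{sec:dest3}). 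Consequently your bidegree argument, which is what you use to promote the easy inequality $c_\sigma\le\max$ to an equality, has no force in general. (The reduction to $n=2$ is also shaky as written, since the definition of $c_k$ for the disjoint union already refers to all $n$ component $U$-maps, not to a single $U$ on each ``half''; but this is secondary.)

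What survives of your construction is the inequality $c_k\big(\coprod_i X_i\big)\le\max\sum_i c_{k_i}(X_i)$: the class $\sigma=\sum_{j_1+\cdots+j_n=k}\sigma_{1,j_1}\otimes\cdots\otimes\sigma_{n,j_n}$, with each $\sigma_{i,j}$ chosen to realize the minimum defining $c_j(Y_i,\lambda_i)$, does satisfy all the required $U$-conditions, and $c_\sigma$ is at most the right-hand side. This is exactly the paper's Step~1. The reverse inequality --- that \emph{every} $\sigma$ satisfying $U_1^{k_1}\cdots U_n^{k_n}\sigma=[\emptyset]$ for all $k_1+\cdots+k_n=k$ has $c_\sigma\ge\sum_i c_{k_i}(Y_i,\lambda_i)$ for some such decomposition --- is the real content, and your argument does not address it, because you have assumed there is only one candidate $\sigma$. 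The paper proves it (Steps~2 and~3 of the proof of Proposition~\ref{prop:dsdu}) by first establishing the filtered K\"unneth identity
\[
H^L\Big(\coprod_i Y_i\Big)=\op{span}\Big\{\bigotimes_i H^{L_i}(Y_i)\;\Big|\;\textstyle\sum_i L_i\le L\Big\}
\]
via action-minimizing bases (Lemma~\ref{lem:amb}) and a dual-cocycle trick (Lemma~\ref{lem:simple}), and then, for each decomposition $(k_1,\ldots,k_n)$, pairing an arbitrary $\sigma\in H^L$ with $L=\sum_i c_{k_i}(Y_i,\lambda_i)$ against a product cocycle $\zeta_1\otimes\cdots\otimes\zeta_n$ that sends $[\emptyset]\mapsto 1$ but kills $U_1^{k_1}\cdots U_n^{k_n}\sigma$. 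Your barcode heuristic (i)--(ii) is morally the same device, but it needs to be deployed against \emph{all} admissible $\sigma$, not just the one you wrote down.
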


\subsection{Examples of symplectic embedding obstructions}

One can now plug the above numbers into Theorem~\ref{thm:main} to get
explicit (but subtle, number-theoretic) obstructions to symplectic
embeddings.

\subsubsection{An ellipsoid into a ball (or ellipsoid)}
\label{sec:eb}

For example, consider the problem of symplectically embedding an
ellipsoid into a ball.  By scaling, we can encode this problem into a single
function as follows: Given $a>0$, define $f(a)$ to be the infimum over
$c\in\R$ such that the ellipsoid $E(a,1)$ symplectically embeds into
the ball $B(c)$.  By Theorem~\ref{thm:main},
Proposition~\ref{prop:ellipsoid}, and Corollary~\ref{cor:ball}, we
have
\begin{equation}
\label{eqn:eb}
f(a)  \ge \sup_{k=2,3,\ldots}\frac{(a,1)_k}{(1,1)_k} =
\sup_{d=1,2,\ldots}\frac{1}{d}(a,1)_{(d^2+3d+2)/2}.
\end{equation}
On the other hand, McDuff-Schlenk \cite{ms} computed the function $f$
explicitly, obtaining a beautiful and complicated answer involving
Fibonacci numbers.  Using their result, they confirmed that the
reverse inequality in \eqref{eqn:eb} holds.  Thus the ECH capacities
give a sharp embedding obstruction in this case.

\begin{update}
  More recently, McDuff \cite{mh} has shown that
  the ECH obstruction to symplectically embedding one ellipsoid into
  another is sharp: $\op{int}(E(a,b))$ symplectically embeds into
  $E(c,d)$ if and only if $(a,b)_k\le (c,d)_k$ for all $k$.
\end{update}

\subsubsection{A polydisk into a ball}
\label{sec:pb}

Next let us consider the problem of symplectically embedding a
polydisk into a ball.  Given $a>0$, define $g(a)$ to be the infimum
over $c\in\R$ such that the polydisk $P(a,1)$ symplectically embeds
into the ball $B(c)$.  By Theorems~\ref{thm:main} and
\ref{thm:polydisk} and Corollary~\ref{cor:ball}, we have
\begin{equation}
\label{eqn:po}
g(a) \ge \sup_{d=1,2,\ldots}
\min\left\{\frac{am+n}{d} \;\bigg|\; (m,n)\in\N^2,\;\;
  (m+1)(n+1)\ge \frac{(d+1)(d+2)}{2}\right\}.
\end{equation}
Simple calculations in \S\ref{sec:simplecalculations} then deduce:

\begin{proposition}
\label{prop:pb}
The obstruction to symplectically embedding a polydisk into a ball
satisfies
\begin{equation}
\label{eqn:ga}
g(a) \ge \left\{\begin{array}{cl}
2, & 1\le a\le 2,\\
1+\frac{a}{2}, & 2\le a \le 3,\\
\frac{3}{2} + \frac{a}{3}, & 3\le a \le 4.
\end{array}\right.
\end{equation}
\end{proposition}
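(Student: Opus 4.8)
The plan is to specialize the lower bound \eqref{eqn:po} by choosing, for each range of $a$, a convenient value of $d$ and then solving the resulting discrete optimization problem over $(m,n)\in\N^2$. Concretely, \eqref{eqn:po} says $g(a)\ge\sup_d G_d(a)$ where $G_d(a)\eqdef\min\{(am+n)/d\mid (m+1)(n+1)\ge (d+1)(d+2)/2\}$, so it suffices to exhibit, for each of the three intervals, a single $d$ for which $G_d(a)$ equals the claimed right-hand side of \eqref{eqn:ga}; the supremum over all $d$ can only be larger. I expect $d=1$ (constraint $(m+1)(n+1)\ge 3$) to handle $1\le a\le 2$, $d=2$ (constraint $(m+1)(n+1)\ge 6$) to handle $2\le a\le 3$, and $d=3$ (constraint $(m+1)(n+1)\ge 10$) to handle $3\le a\le 4$.

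First I would treat $d=1$. The lattice points with $(m+1)(n+1)\ge 3$ that are candidates for the minimum of $am+n$ (with $a\ge1$) are $(0,2)$ and $(1,1)$, giving values $2$ and $a+1$; for $1\le a\le 2$ the minimum is $2$, so $G_1(a)=2$ on this range. Next, for $d=2$ with constraint $(m+1)(n+1)\ge 6$: the relevant minimal lattice points are $(0,5)$, $(1,2)$, $(2,1)$, $(5,0)$, with values $5$, $a+2$, $2a+1$, $5a$; for $2\le a\le 3$ one checks $a+2\le 5$ and $a+2\le 2a+1$ (the latter since $a\ge1$) and $a+2\le 5a$, so the minimum is $a+2$ and thus $G_2(a)=(a+2)/2=1+a/2$. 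Finally, for $d=3$ with constraint $(m+1)(n+1)\ge 10$: the candidate lattice points on the "staircase" are $(0,9)$, $(1,4)$, $(3,1)$ (and $(2,2)$ with $(m+1)(n+1)=9<10$ is excluded, while $(4,1)$ and $(9,0)$ are dominated), with values $9$, $a+4$, $3a+1$; for $3\le a\le 4$ one verifies $a+4$ is the minimum, giving $G_3(a)=(a+4)/3=3/2+a/3$.

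The main obstacle — really the only nonroutine part — is making sure the enumeration of candidate lattice points is complete: one must argue that points not on the relevant portion of the hyperbola $(m+1)(n+1)=k+1$ (for $k=(d+1)(d+2)/2$) cannot give a smaller value of $am+n$. This is handled by the observation that if $(m,n)$ satisfies the constraint and $(m,n')$ with $n'<n$ also satisfies it, then $(m,n')$ is at least as good, so one only needs the "lower-left staircase" of minimal solutions; and for fixed small $d$ this staircase is a short explicit list. I would also note the elementary monotonicity in $a$: each $G_d$ is piecewise linear and continuous, so the three pieces in \eqref{eqn:ga} glue correctly at $a=2,3$ (indeed $G_1(2)=2=G_2(2)$ and $G_2(3)=5/2=G_3(3)$), which is a good consistency check but not logically needed since we only claim a lower bound. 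Everything reduces to the finitely many inequalities among linear functions of $a$ listed above, which are the "simple calculations" referred to in the statement.
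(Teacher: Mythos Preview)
Your approach is the same as the paper's: specialize \eqref{eqn:po} to particular values of $d$ and solve the resulting finite optimization. The paper also takes $d=1$ for $1\le a\le 2$, but then uses the single value $d=6$ (staircase vertices $(3,6)$ and $(2,9)$) to obtain both $1+a/2$ on $[2,3]$ and $3/2+a/3$ on $[3,4]$; your choices $d=2,3$ work equally well and involve shorter enumerations.

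There is one slip in your $d=3$ case that you should fix. The point $(3,1)$ has $(3+1)(1+1)=8<10$, so it is \emph{not} feasible; and you have omitted the genuine staircase points $(2,3)$, $(3,2)$, $(4,1)$ (with $(m+1)(n+1)=12,12,10$ respectively). Fortunately this does not affect the conclusion: for $a\ge 1$ one has $a+4\le 2a+3\le 3a+2\le 4a+1$, so $(1,4)$ remains the minimizer on $[3,4]$ and $G_3(a)=(a+4)/3$ as you claim. Just correct the list of candidates.
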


Note that when $a\neq 2$ this is better than the lower bound $g(a)\ge
\sqrt{2a}$ obtained by considering volumes.  For $a$ slightly larger
than $4$, a more complicated calculation which we omit shows that the
best bound that can be obtained from \eqref{eqn:po} is
\[
g(a) \ge \frac{19}{12} + \frac{5a}{16},
\]
which comes from taking $d=48$ in \eqref{eqn:po}.  We do not know much
about the right hand side of \eqref{eqn:po} for larger $a$, although
we do know that it is always at least $\sqrt{2a}$, see
\S\ref{sec:volumeintro} below.  By analogy with \cite{ms} one might
guess that $g(a)=\sqrt{2a}$ when $a$ is sufficiently large.

\begin{remark}
  We do not know to what extent the bound \eqref{eqn:po} is sharp.  In
  general, the obstruction from Theorem~\ref{thm:main} to
  embedding a polydisk into an ellipsoid is not always sharp.  For
  example, Proposition~\ref{prop:ellipsoid} and
  Theorem~\ref{thm:polydisk} imply that $P(1,1)$ and $E(1,2)$ have the
  same ECH capacities, namely
\[
0,1,2,2,3,3,4,4,4,5,5,5,\ldots.
\]
  Thus the ECH capacities
  give no obstruction to symplectically embedding $P(1,1)$ into
  $E(a,2a)$ for any $a>1$, and in particular tell us nothing more than
  volume comparison.  However the Ekeland-Hofer capacities give an
  obstruction to symplectically embedding $P(1,1)$ into $E(a,2a)$
  whenever $a<3/2$.  (The Ekeland-Hofer capacities of $P(1,1)$ are
  $1,2,3,\ldots$, while those of $E(a,2a)$ are
  $a,2a,2a,3a,4a,4a,\ldots$, see \cite{chls}.)  Note that $P(1,1)$
  does symplectically embed into $E(a,2a)$ whenever $a\ge 3/2$.  Indeed,
  with the conventions of \eqref{eqn:ellipsoid} and
  \eqref{eqn:polydisk}, $P(1,1)$ is a subset of $E(3/2,3)$.
\end{remark}

\subsubsection{A disjoint union of balls into a ball}

The ECH capacities give the following obstruction to
symplectically embedding a disjoint union of balls into a ball:

\begin{proposition}
\label{prop:packing}
Suppose there is a symplectic embedding of $\coprod_{i=1}^n B(a_i)$
into the interior of $B(1)$.  Then
\begin{equation}
\label{eqn:packing}
\sum_{i=1}^nd_ia_i < d
\end{equation}
whenever $(d_1,\ldots,d_n,d)$ are nonnegative integers (not all zero)
satisfying
\[
\sum_{i=1}^n(d_i^2+d_i) \le d^2+3d.
\]
\end{proposition}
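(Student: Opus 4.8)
The plan is to run the disjoint union through Theorem~\ref{thm:main} and then evaluate the two sides with Proposition~\ref{prop:du} and Corollary~\ref{cor:ball}. Fix nonnegative integers $(d_1,\ldots,d_n,d)$, not all zero, with $\sum_{i=1}^n(d_i^2+d_i)\le d^2+3d$. If every $d_i$ is zero then $d\ge 1$ and $\sum_i d_ia_i=0<d$ holds trivially, so I may assume some $d_i>0$. For each $i$ I would take $k_i\eqdef(d_i^2+d_i)/2$, which by Corollary~\ref{cor:ball} is the smallest nonnegative integer with $c_{k_i}(B(a_i))=d_ia_i$, and set $k\eqdef\sum_{i=1}^nk_i$, a positive integer satisfying $k\le (d^2+3d)/2$ by the hypothesis.

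Since each $B(a_i)$ is bounded, $\coprod_i B(a_i)$ has all of its ECH capacities finite, so the given symplectic embedding into $\op{int}(B(1))$ together with Theorem~\ref{thm:main} yields the \emph{strict} inequality $c_k\!\left(\coprod_i B(a_i)\right)<c_k(B(1))$. I would bound the left side from below by applying Proposition~\ref{prop:du} to the partition $k=\sum_i k_i$, which gives $c_k\!\left(\coprod_i B(a_i)\right)\ge\sum_i c_{k_i}(B(a_i))=\sum_i d_ia_i$. For the right side, writing $c_k(B(1))=d'$ with $d'$ the integer determined by $(d'^2+d')/2\le k\le (d'^2+3d')/2$ as in Corollary~\ref{cor:ball}, the bound $(d'^2+d')/2\le k\le (d^2+3d)/2$ forces $d'^2+d'\le d^2+3d$, and since $(d+1)^2+(d+1)=d^2+3d+2$ this gives $d'\le d$. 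Chaining the three estimates then produces $\sum_i d_ia_i<d$, as desired.

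The content is essentially bookkeeping once Theorem~\ref{thm:main}, Proposition~\ref{prop:du}, and Corollary~\ref{cor:ball} are in hand, so I do not anticipate a genuine obstacle; the one point that must be handled correctly is the matching of indices. The capacity $c_k(B(a))$ is constant equal to $da$ on the integer interval $k\in[(d^2+d)/2,\,(d^2+3d)/2]$, and the trick is to place each $k_i$ at the \emph{left} endpoint of its interval, so that the sum $k$ is as small as possible, while recognizing that $c_k(B(1))$ still equals $d$ all the way out to the \emph{right} endpoint $(d^2+3d)/2$ of its interval. The hypothesis $\sum_i(d_i^2+d_i)\le d^2+3d$ is exactly the condition guaranteeing that this placement is consistent, and it is the only place that hypothesis is used.
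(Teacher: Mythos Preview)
Your proof is correct and follows essentially the same route as the paper: set $k_i=(d_i^2+d_i)/2$, $k=\sum k_i$, and chain Proposition~\ref{prop:du}, the strict inequality from Theorem~\ref{thm:main}, and Corollary~\ref{cor:ball}. The only cosmetic difference is that the paper bounds $c_k(B(1))$ by passing through $c_{k'}(B(1))$ with $k'=(d^2+3d)/2$ via monotonicity, whereas you compute $c_k(B(1))=d'$ directly and argue $d'\le d$; you are also slightly more careful in isolating the trivial case $d_1=\cdots=d_n=0$, where $k=0$ and Theorem~\ref{thm:main} does not apply.
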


\begin{proof}
  Let $k_i\eqdef (d_i^2+d_i)/2$ for $i=1,\ldots,n$, let $k\eqdef
  \sum_{i=1}^nk_i$, and let $k'\eqdef (d^2+3d)/2$.  By
  Corollary~\ref{cor:ball} we have $c_{k_i}(B(a_i))=d_ia_i$ and
  $c_{k'}(B(1))=d$.  Then
\[
\sum_{i=1}^nd_ia_i = 
\sum_{i=1}^n c_{k_i}(B(a_i))
\le c_k\left(\coprod_{i=1}^nB(a_i)\right) <
c_k(B(1)) \le c_{k'}(B(1))=d.
\]
Here the first inequality holds by Proposition~\ref{prop:du}, the
second inequality by Theorem~\ref{thm:main}, and the third inequality
by our assumption that $k\le k'$.
\end{proof}

\begin{remark}
  Proposition~\ref{prop:packing} is not new and, as explained to me by
  Dusa McDuff, can also be deduced by applying Taubes's
  ``Seiberg-Witten = Gromov'' theorem \cite{swgr} to a symplectic
  blowup of ${\mathbb CP}^2$.  The interesting point is that
  Proposition~\ref{prop:packing}, and thus ECH, gives a sharp
  obstruction.  Indeed, it follows from work of Biran \cite[Thm.\
  3.2]{biran} that there exists a symplectic embedding of
  $\coprod_{i=1}^nB(a_i)$ into $B(1+\epsilon)$ for all $\epsilon>0$
  if:
\begin{description}
\item{(i)} $\sum_{i=1}^na_i^2\le 1$, i.e.\ the volume of
  $\coprod_iB(a_i)$ is less than or equal to that of $B(1)$, and
\item{(ii)} the inequality $\sum_{i=1}^nd_ia_i\le d$ holds for all
  tuples of nonnegative integers $(d_1,\ldots,d_n,d)$ satisfying
  $\sum_{i=1}^nd_i=3d-1$ and $\sum_{i=1}^nd_i^2=d^2+1$.
\end{description}
(As explained in \cite[\S1.2]{ms}, results of \cite{ll,mp} imply that
one can replace the inequalities (ii) above by a certain subset
thereof.)  But Proposition~\ref{prop:packing} implies that conditions
(i) and (ii) are also necessary for the existence of a symplectic
embedding.  Note here that by Proposition~\ref{prop:somewhere} below,
the inequalities \eqref{eqn:packing} imply the volume constraint (i).
\end{remark}

\subsection{More examples of ECH capacities}
\label{sec:dualintro}

We can also compute the ECH capacities of certain subsets of the
cotangent bundle of $T^2=\R^2/\Z^2$, such as the unit disk bundle,
using results from \cite{t3}.  Let $\|\cdot\|$ be a norm on $\R^2$,
regarded as a translation-invariant norm on $TT^2$.  Let $\|\cdot\|^*$
denote the dual norm on $(\R^2)^*$, which we regard as a
translation-invariant norm on $T^*T^2$.  That is, if $\zeta\in
T_q^*T^2$, then
\[
\|\zeta\|^* = \max\left\{\langle\zeta,v\rangle \;\big|\; v\in T_qT^2,
  \; \|v\|\le 1\right\}.
\]
Define
\[
T_{\|\cdot\|^*} \eqdef \left\{\zeta\in T^*T^2 \;\big|\; \|\zeta\|^*\le
    1\right\},
\]
with symplectic form obtained by restricting the standard symplectic
form $\omega=\sum_{i=1}^2dp_i\,dq_i$ on $T^*T^2$.  Here $q_1,q_2$
denote the standard coordinates on $T^2$, and $p_1,p_2$ denote the
corresponding coordinates on the cotangent fibers.

If $\|\cdot\|$ is smooth, then the unit ball in the dual norm
$\|\cdot\|^*$ on $\R^2$ is smooth, and $T_{\|\cdot\|^*}$ is a
Liouville domain, because $\lambda=\sum_{i=1}^2p_idq_i$ restricts to a
contact form on the boundary.  For example, if $\|\cdot\|$ is the
Euclidean norm, then $T_{\|\cdot\|^*}$ is the unit disk bundle in the
cotangent bundle of $T^2$ with the standard flat metric.

\begin{theorem}
\label{thm:dual}
If $\|\cdot\|$ is a norm on $\R^2$, then
\begin{equation}
\label{eqn:dual}
c_k\left(T_{\|\cdot\|^*}\right) =
\min\left\{\ell_{\|\cdot\|}(\Lambda) \,\big|\, |P_\Lambda\cap\Z^2|=k+1
\right\}.
\end{equation}
Here the minimum is over convex polygons $\Lambda$ in $\R^2$ with
vertices in $\Z^2$, and $P_\Lambda$ denotes the closed region bounded
by $\Lambda$.  Also $\ell_{\|\cdot\|}(\Lambda)$ denotes the length of
$\Lambda$ in the norm $\|\cdot\|$.
\end{theorem}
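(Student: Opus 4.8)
The plan is to compute the embedded contact homology of the contact boundary $Y\eqdef\partial T_{\|\cdot\|^*}$, together with its symplectic action filtration and $U$-map, by appealing to the computation of $ECH(T^3)$ in \cite{t3}, and then to read \eqref{eqn:dual} off the resulting ``$U$-tower''. I would first assume that $\|\cdot\|$ is smooth and strictly convex, so that $\|\cdot\|^*$ is likewise smooth and strictly convex, $T_{\|\cdot\|^*}$ is a genuine Liouville domain, and $Y$ is the unit cotangent bundle of the Finsler $2$-torus $(T^2,\|\cdot\|)$. In this case $Y$ is diffeomorphic to $T^3$, the contact form $\lambda=\sum_ip_i\,dq_i|_Y$ is of the standard type whose ECH is computed in \cite{t3}, and the Reeb vector field is tangent to the fibers of $Y\to T^2$, with its closed orbits organized into Morse--Bott tori $T_v$, one for each primitive $v\in\Z^2$, consisting of the lifts of the closed $\|\cdot\|$-geodesics on $T^2$ that translate by $v$. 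I would then check the basic computation that the symplectic action of the $m$-fold cover of an orbit in $T_v$ is $m\|v\|$; equivalently, the action of the orbit set attached to a closed integer lattice path is the $\|\cdot\|$-length of that path.

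After a small Morse--Bott perturbation making $\lambda$ nondegenerate, each $T_v$ contributes an elliptic orbit and a hyperbolic orbit of action close to $\|v\|$, and \cite{t3} computes the resulting ECH together with its action filtration and $U$-map. The upshot I would extract is the following: the classes $\sigma_0,\sigma_1,\ldots$ in the $U$-tower based at the contact element (so $U\sigma_k=\sigma_{k-1}$, with $\sigma_0$ the empty orbit set) are represented by orbit sets indexed by convex polygons $\Lambda$ with vertices in $\Z^2$ -- an edge equal to $m$ times a primitive vector $v$ corresponding to the $m$-fold cover of the orbit in $T_v$ -- where $\Lambda$ represents $\sigma_k$ exactly when $P_\Lambda$ contains $k+1$ lattice points (a count of the sort that enters via Pick's formula), with $\sigma_0$ the one-point ``polygon''; and the symplectic action of the orbit set for $\Lambda$ equals $\sum_{\text{edges }v}\|v\|=\ell_{\|\cdot\|}(\Lambda)$ by the action computation of the previous step. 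Granting this, and granting that the differential cannot let a lower-action cycle represent $\sigma_k$ (which is part of the structure in \cite{t3}, where $U$ acts by ``rounding corners'' and strictly decreases the enclosed lattice-point count), the infimum defining $c_k(T_{\|\cdot\|^*})$ is attained by such a polygon generator and equals the right side of \eqref{eqn:dual}. One also has to check that the Morse--Bott perturbation can be taken small enough that this equality survives the limit defining the ECH spectrum of the degenerate form $\lambda$.

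For a general norm $\|\cdot\|$, $T_{\|\cdot\|^*}$ need not be a Liouville domain, but it is a compact subset of $(T^*T^2,\omega)$, so $c_k$ is defined via the extension in \S\ref{sec:mgd}, and I would finish by approximation. Choose smooth strictly convex norms $\|\cdot\|_-\le\|\cdot\|\le\|\cdot\|_+$ converging to $\|\cdot\|$; passing to dual norms reverses the inclusions of unit balls, giving $T_{\|\cdot\|_-^*}\subseteq T_{\|\cdot\|^*}\subseteq T_{\|\cdot\|_+^*}$, whence $c_k(T_{\|\cdot\|_-^*})\le c_k(T_{\|\cdot\|^*})\le c_k(T_{\|\cdot\|_+^*})$ by monotonicity (Theorem~\ref{thm:main}, in its extended form). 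The two outer quantities equal the right side of \eqref{eqn:dual} for $\|\cdot\|_\pm$ by the case already treated. Finally, the right side of \eqref{eqn:dual} is a minimum of the quantities $\sum_{\text{edges}}\|v\|$, which are linear in $\|\cdot\|$, over the fixed family of convex lattice polygons with $k+1$ lattice points; the minimum is attained because a minimizer may be taken inside a ball whose radius depends only on $k$ and on bounds for $\|\cdot\|$, so this minimum is monotone and continuous in the norm, and hence both bounds converge to the right side of \eqref{eqn:dual} for $\|\cdot\|$. Squeezing gives the general case.

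The main obstacle is the input from \cite{t3} used in the second paragraph: extracting from that computation not just the ECH groups but the full package -- the symplectic action filtration, the $U$-tower based at the contact element, the polygon description of its generators, and the fact that the differential never produces a representative of $\sigma_k$ of strictly smaller action than the best polygon -- so that the spectral invariant $c_k$ is genuinely the combinatorial minimum in \eqref{eqn:dual}. A secondary technical point is the Morse--Bott limit, since $\lambda$ is degenerate and $c_k(T_{\|\cdot\|^*})$ must be computed as a limit over nondegenerate perturbations; one must check this limit exists and equals the polygon minimum, again using the analysis of \cite{t3}. Once these are in place, the Reeb-dynamics computation of the first paragraph and the approximation argument of the third are routine.
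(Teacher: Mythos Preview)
Your proposal is correct and follows the same approach as the paper: reduce to smooth norms by approximation and monotonicity (Proposition~\ref{prop:extend}), compute the Reeb dynamics on $\partial T_{\|\cdot\|^*}$ to find Morse--Bott circles indexed by primitive $(m,n)\in\Z^2$ with action $\|(m,n)\|$, and then carry over the proof of Proposition~\ref{prop:standardt3} verbatim with Euclidean length replaced by $\ell_{\|\cdot\|}$. The only place where the paper is more explicit than your sketch is the lower bound: rather than arguing directly that no lower-action cycle can represent the $U$-tower class, the paper uses that $U$ is an isomorphism on $ECH_*(T^3,\lambda,0)$ for $*\ge 2$ to write any $\sigma$ with $U^k\sigma=[\emptyset]$ as $[\alpha_0]+\sigma'$ with $\sigma'$ of grading $<2k$, and then invokes the grading formula \eqref{eqn:T3grading} to identify $\ell_{\|\cdot\|}(\Lambda_0)$ as the minimum action among \emph{all} generators of grading exactly $2k$.
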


It is an interesting problem to understand the ECH capacities of the
unit disk bundle in the cotangent bundle of more general surfaces than
flat $T^2$.

\subsection{Volume conjecture}
\label{sec:volumeintro}

In all of the examples considered above, it turns out that the
asymptotic behavior of the symplectic embedding obstruction given by
Theorem~\ref{thm:main} as $k\to\infty$ simply recovers the necessary
condition that the volume of $(X_0,\omega_0)$ be less than or equal to
that of $(X_1,\omega_1)$.  Here the volume of a four-dimensional Liouville domain
$(X,\omega)$ is defined by
\[
\op{vol}(X,\omega) = \frac{1}{2}\int_X \omega\wedge\omega.
\]
The conjectural more general phenomenon is that the asymptotics of the
ECH capacities are related to volume as follows:

\begin{conjecture}
\label{conj:Liouville}
Let $(X,\omega)$ be a four-dimensional Liouville domain such that
$c_k(X,\omega)<\infty$ for all $k$.  Then
\[
\lim_{k\to\infty}\frac{c_k(X,\omega)^2}{k} = 4\op{vol}(X,\omega).
\]
\end{conjecture}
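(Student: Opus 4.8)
The plan is to reduce Conjecture~\ref{conj:Liouville} to a ``Weyl law'' for the ECH spectrum of the contact boundary, to verify that law in the cases where the ECH capacities have been computed above, and then to attack the general case by splitting it into a lower bound from symplectic packing and an upper bound from Seiberg--Witten theory. Write $(Y,\lambda)$ for the contact boundary $\partial X$; choosing a primitive $\mu$ of $\omega$ on $X$ with $\mu|_Y=\lambda$, Stokes's theorem gives $\int_Y\lambda\wedge d\lambda = \int_X d\mu\wedge d\mu = 2\op{vol}(X,\omega)$. Since $c_k(X,\omega)$ is by definition (see \S\ref{sec:dbp}) the ECH spectral number of $(Y,\lambda)$ attached to a distinguished sequence of classes in prescribed gradings, the conjecture is equivalent to the asymptotic statement $c_k(X,\omega)\sim 2\sqrt{k\,\op{vol}(X,\omega)}$, i.e.\ a Weyl-type law $c_k^2/k\to 2\int_Y\lambda\wedge d\lambda$ for these spectral invariants. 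When $Y$ is disconnected one first reduces to the connected case using Proposition~\ref{prop:du}: since the ECH capacities are nondecreasing, the maximum of $\sum_i c_{k_i}(X_i,\omega_i)$ over $\sum_i k_i=k$ is, up to a lower-order error from the integrality of the $k_i$, the maximum of $\sum_i 2\sqrt{k_i\,\op{vol}(X_i,\omega_i)}$ subject to $\sum_i k_i=k$, which by Cauchy--Schwarz equals $2\sqrt{k\sum_i\op{vol}(X_i,\omega_i)} = 2\sqrt{k\,\op{vol}(\coprod_i X_i)}$.

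\step{1} \emph{The computable examples.} I would first check the asymptotic formula in each case where the ECH capacities are known, both as evidence and to locate the source of the volume term. For the ellipsoid $E(a,b)$, Proposition~\ref{prop:ellipsoid} reduces it to the lattice-point estimate $\#\{(m,n)\in\N^2 : am+bn\le L\} = \frac{L^2}{2ab} + O(L)$ (the area of the triangle $\{x,y\ge 0,\ ax+by\le L\}$ up to a boundary error), which on inversion gives $c_k(E(a,b))\sim\sqrt{2abk} = 2\sqrt{k\,\op{vol}(E(a,b))}$; Corollary~\ref{cor:ball} is the special case $d\sim\sqrt{2k}$. For the polydisk $P(a,b)$, Theorem~\ref{thm:polydisk} reduces it to maximizing $(m+1)(n+1)$ subject to $am+bn\le L$, which by the arithmetic--geometric mean inequality is $\sim L^2/(4ab)$, giving $c_k(P(a,b))\sim 2\sqrt{abk} = 2\sqrt{k\,\op{vol}(P(a,b))}$. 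For the cotangent domains $T_{\|\cdot\|^*}$ of Theorem~\ref{thm:dual}, one uses Pick's formula to replace the constraint $|P_\Lambda\cap\Z^2| = k+1$ by $\op{area}(P_\Lambda) = k - O(\ell_{\|\cdot\|}(\Lambda))$ (bounding the number of boundary lattice points of a convex lattice polygon by its length), and then the Minkowski mixed-area (isoperimetric) inequality in the normed plane, $\ell_{\|\cdot\|}(\Lambda)^2\ge 4\,\op{area}(P_\Lambda)\,\op{area}(\widehat B)$, where the isoperimetrix $\widehat B$ is the polar of the unit ball of $\|\cdot\|$ rotated by a right angle, so that $\op{area}(\widehat B) = \op{area}(\{\|\cdot\|^*\le 1\}) = \op{vol}(T_{\|\cdot\|^*})$; equality is attained in the limit by lattice polygons approximating large dilates of $\widehat B$, giving $c_k(T_{\|\cdot\|^*})\sim 2\sqrt{k\,\op{vol}(T_{\|\cdot\|^*})}$.

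\step{2} \emph{The lower bound in general.} For an arbitrary four-dimensional Liouville domain with all $c_k$ finite, I would obtain $\liminf_k c_k(X,\omega)^2/k\ge 4\op{vol}(X,\omega)$ from monotonicity together with symplectic ball packing: by the packing results of McDuff--Polterovich and Biran \cite{mp,biran}, for every $\epsilon>0$ there is a symplectic embedding of a disjoint union of balls $\coprod_i B(a_i)$ into $\op{int}(X)$ with $\sum_i\op{vol}(B(a_i)) > \op{vol}(X,\omega) - \epsilon$. Then Theorem~\ref{thm:main}, Corollary~\ref{cor:ball}, Proposition~\ref{prop:du}, and Step~1 give $c_k(X,\omega)\ge c_k(\coprod_i B(a_i))\sim 2\sqrt{k(\op{vol}(X,\omega)-\epsilon)}$; letting $\epsilon\to 0$ yields the claim.

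\step{3} \emph{The upper bound, and the main obstacle.} The reverse inequality $\limsup_k c_k(X,\omega)^2/k\le 4\op{vol}(X,\omega)$ is the crux of the matter. It cannot be extracted from embedding $X$ into larger domains --- these only make the volume worse, and the relevant covering problems are already open for the polydisk (cf.\ the remark in \S\ref{sec:pb}) --- so one must use the definition of the ECH spectrum directly. Under Taubes's isomorphism between embedded contact homology and Seiberg--Witten Floer cohomology --- the circle of ideas behind the cobordism maps of \cite{cc1,cc2} --- the number $c_k(X,\omega)$ is realized as a min-max critical value of a perturbed Chern--Simons--Dirac functional on $Y$, the perturbation involving a large parameter $r$ times $\lambda$. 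As $r\to\infty$ the relevant Seiberg--Witten solutions concentrate along the Reeb orbits, and a priori estimates of the type used in Taubes's proof of the Weinstein conjecture express the growth of the relevant gradings against a volume-type quantity built from $\int_Y\lambda\wedge d\lambda = 2\op{vol}(X,\omega)$. Turning this into the stated inequality --- in particular controlling all the correction terms so that they are $o(k)$ --- is the principal difficulty, and is where analytic input going beyond the present paper will be required.
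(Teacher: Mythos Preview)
Your plan matches the paper's own treatment almost exactly: this statement is a \emph{conjecture} in the paper, and the paper does not prove it. What the paper does prove is precisely your Steps~1 and~2 (the examples and the lower bound), and it leaves your Step~3 open, remarking only that Taubes has suggested the spectral-flow estimates from the proof of \eqref{eqn:echswf} as a possible route to the general Weyl law (this is the content of Conjecture~\ref{conj:general} and the sentence following it). So your assessment of where the real difficulty lies is correct, and your Step~3 is honestly labeled as incomplete.

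Two small points where your write-up diverges from the paper. First, for the lower bound (your Step~2) you invoke the packing theorems of McDuff--Polterovich and Biran. Those results are about closed symplectic $4$-manifolds with special properties (blow-ups of rational or ruled surfaces, rational symplectic class) and do not apply to an arbitrary Liouville domain as stated; in any case they are vast overkill. The paper's argument is more elementary and more robust: cover $X$ up to volume $\epsilon$ by finitely many Darboux charts, and inside each chart pack by small polydisks (products of smoothed squares). Since the conjecture is already verified for polydisks and is preserved under disjoint union (your Cauchy--Schwarz reduction, which is the paper's Proposition~\ref{prop:somewhere}), Theorem~\ref{thm:main} gives the $\liminf$ bound. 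Your ball version would work just as well once you replace the citation by ``many small balls in Darboux charts,'' which needs nothing beyond the Darboux theorem.

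Second, the paper also records a conditional upper bound that you omit: if $(X_0,\omega_0)$ symplectically embeds into some $(X_1,\omega_1)$ for which the conjecture is already known, then by filling the complement with polydisks and applying Theorem~\ref{thm:main} again one gets the matching $\limsup$ bound for $X_0$. This does not settle the general case, but it does enlarge the class of examples for which the conjecture is verified beyond those in your Step~1.
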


It is not hard to check this for an ellipsoid, cf.\
Remark~\ref{rem:ellipsoidVolume}.  It is also easy to check this for a
polydisk (even though the conjecture is not applicable here since a
polydisk is not quite a Liouville domain).  In \S\ref{sec:volume} we
further confirm that this conjecture holds for the examples in
Theorem~\ref{thm:dual}, as well as for any disjoint union or subset of
examples for which the conjecture holds.  Note that the hypothesis
that $c_k(X,\omega)<\infty$ for all $k$ holds only if the first Chern
class (not the ECH capacity) $c_1(X,\omega)\in H^2(X;\Z)$ restricts to
a torsion class in $H^2(\partial X;\Z)$, see Remark~\ref{rem:torsion}.

Conjecture~\ref{conj:Liouville} is related to the question of whether
the Weinstein conjecture in three dimensions \cite{taubes:wc} can be
refined to show that a closed contact 3-manifold has a
Reeb orbit with an explicit upper bound on the length, see
Remark~\ref{rem:qw}.

\subsection{Contents of the paper}

There are in fact two basic ways to define ECH capacities of a
four-dimensional Liouville domain $(X,\omega)$: in addition to the
``distinguished'' ECH capacities $c_k(X,\omega)$ discussed above,
there is also a more rudimentary notion which we call the ``full ECH
capacities'' and which we denote by $\widetilde{c}_k(X,\omega)$.  The
full ECH capacities satisfy an analogue of Theorem~\ref{thm:main}, but
only under the additional assumption that if $\varphi$ denotes the
symplectic embedding in question, then
$X_1\setminus\varphi(\op{int}(X_0))$ is diffeomorphic to a product
$[0,1]\times Y^3$.  The numbers $c_k(X,\omega)$ are a certain
carefully selected subset of the numbers $\widetilde{c}_k(X,\omega)$
for which the more general statement of Theorem~\ref{thm:main} is
true.

Both the full and distinguished ECH capacities of a four-dimensional
Liouville domain $(X,\omega)$ with boundary $Y$ are defined in terms
of the embedded contact homology of $(Y,\lambda)$, where $\lambda$ is
a contact form on $Y$ with $d\lambda=\omega|_Y$.  In \S\ref{sec:ECH}
we recall the necessary material about embedded contact homology.

In \S\ref{sec:full} we associate to a closed contact 3-manifold
$(Y,\lambda)$ a sequence of numbers $\widetilde{c}_k(Y,\lambda)$,
which we call its ``full ECH spectrum''; these numbers measure the
amount of symplectic action needed to represent certain classes in the
embedded contact homology of $(Y,\lambda)$.  The full ECH capacities
of a four-dimensional Liouville domain are then defined to be the full
ECH spectrum of its boundary.  Proposition~\ref{prop:ellipsoid} above
regarding the ECH capacities of ellipsoids is equivalent to
Proposition~\ref{prop:ellipsoidCapacity} which is proved in this
section.

In \S\ref{sec:distinguished} we give the crucial definition of the
``distinguished ECH spectrum'' of a closed contact 3-manifold
$(Y,\lambda)$ with nonvanishing ECH contact invariant (e.g.\ the
boundary of a Liouville domain).  The distinguished ECH capacities of
a four-dimensional Liouville domain are then defined to be the
distinguished ECH spectrum of its boundary.  This section also gives
the proof of Theorem~\ref{thm:main}; once the correct definitions are
in place, this is a simple application of the machinery of ECH
cobordism maps from \cite{cc2}.  Finally, this section explains how to
extend the definition of (distinguished) ECH capacities and
Theorem~\ref{thm:main} to arbitrary subsets of symplectic
four-manifolds.

In \S\ref{sec:interlude} we compute the (distinguished) ECH spectrum
of a disjoint union of contact 3-manifolds, which implies
Proposition~\ref{prop:du} above on the ECH capacities of a disjoint
union of Liouville domains.  In \S\ref{sec:t3} we prove
Theorem~\ref{thm:dual} regarding the ECH capacities of certain subsets
of $T^*T^2$.  In \S\ref{sec:pd} we prove Theorem~\ref{thm:polydisk} on
the ECH capacities of a polydisk.  Proposition~\ref{prop:pb} above on
the obstruction to symplectically embedding a polydisk into a ball is
also proved in this section.  Finally, in \S\ref{sec:volume} we
discuss the volume conjecture \ref{conj:Liouville} and several
variants, and present some evidence for them.

\paragraph{Acknowledgments.}
I thank Dusa McDuff for introducing me to this problem and for helpful
discussions, Felix Schlenk and Cliff Taubes for additional helpful
discussions, and MSRI for its hospitality.  This work was partially
supported by NSF grant DMS-0806037.

\section{ECH preliminaries}
\label{sec:ECH}

We now review the necessary background on embedded contact homology.

\subsection{Definition of ECH}

Let $Y$ be a closed oriented 3-manifold.  A {\em contact form\/} on
$Y$ is a $1$-form $\lambda$ on $Y$ with $\lambda\wedge d\lambda>0$
everywhere.  This determines a {\em contact structure\/}, namely the
oriented $2$-plane field $\xi=\Ker(\lambda)$.  We call the pair
$(Y,\lambda)$ a ``contact 3-manifold'', although it is perhaps more
usual to refer to the pair $(Y,\xi)$ this way.

The contact form $\lambda$ determines the {\em Reeb vector field\/}
$R$ characterized by $d\lambda(R,\cdot)=0$ and $\lambda(R)=1$.  A {\em
  Reeb orbit\/} is a closed orbit of the Reeb vector field $R$, i.e.\
a map $\gamma:\R/T\Z\to Y$ for some $T>0$ with
$\gamma'(t)=R(\gamma(t))$, modulo reparametrization.  A Reeb orbit is
{\em nondegenerate\/} if its linearized return map, regarded as an
endomorphism of the $2$-dimensional symplectic vector space
$(\xi_{\gamma(0)},d\lambda)$, does not have $1$ as an eigenvalue.  A
nondegenerate Reeb orbit is called {\em hyperbolic\/} if its
linearized return map has real eigenvalues; otherwise it is called
{\em elliptic\/}.  We say that the contact form $\lambda$ is
nondegenerate if all Reeb orbits are nondegenerate.

If $Y$ is a closed oriented $3$-manifold with a nondegenerate contact
form $\lambda$, and if $\Gamma\in H_1(Y)$, then the {\em embedded
  contact homology\/} with $\Z/2$-coefficients, which we denote by
$ECH(Y,\lambda,\Gamma)$, is defined.  (ECH can also be defined over
$\Z$, see \cite[\S9]{obg2}, but $\Z/2$ coefficients are sufficient for
the applications in this paper.)  This is the homology of a chain
complex which is generated over $\Z/2$ by finite sets of pairs
$\alpha=\{(\alpha_i,m_i)\}$ where the $\alpha_i$'s are distinct
embedded Reeb orbits, the $m_i$'s are positive integers, $m_i=1$
whenever $\alpha_i$ is hyperbolic, and
\[
\sum_im_i[\alpha_i]=\Gamma\in H_1(Y).
\]
We call such an $\alpha$ an {\em ECH generator\/}.
We often use the multiplicative
notation $\alpha=\prod_i\alpha_i^{m_i}$, even though the grading and
differential on the chain complex do not behave simply with respect to
this sort of multiplication.

To define the chain complex differential $\partial$ one chooses a
generic almost complex structure $J$ on $\R\times Y$ which is
``admissible'', meaning that $J$ is $\R$-invariant, $J(\partial_s)=R$
where $s$ denotes the $\R$ coordinate, and $J$ sends $\xi$ to itself,
rotating positively with respect to the orientation $d\lambda$ on
$\xi$.  The coefficient $\langle\partial\alpha,\beta\rangle$ of the
differential is then a count of $J$-holomorphic curves in $\R\times Y$
which have ECH index $1$ and which as currents are asymptotic to
$\R\times\alpha$ as $s\to\infty$ and asymptotic to $\R\times\beta$ as
$s\to-\infty$.  The detailed definition of the differential is given
for example in \cite[\S7]{obg1}, using the ECH index defined in
\cite{pfh2,ir}.  We denote this chain complex by
$ECC(Y,\lambda,\Gamma,J)$, and its homology by
$ECH(Y,\lambda,\Gamma)$.

The $\Z/2$-module $ECH(Y,\lambda,\Gamma)$ has a relative
$\Z/d$-grading, where $d$ denotes the divisibility of
$c_1(\xi)+2\op{PD}(\Gamma)$ in $H^2(Y;\Z)/\op{Torsion}$.  The detailed
definition of the grading will not be needed here and can be found in
\cite{pfh2,ir}.

Although the differential on the chain complex
$ECC(Y,\lambda,\Gamma,J)$ depends on $J$, the homology
$ECH(Y,\lambda,\Gamma)$ does not.  This follows from a much stronger
theorem of Taubes
\cite{taubes:echswf1,taubes:echswf2,taubes:echswf3,taubes:echswf4}
asserting that there is a canonical isomorphism between embedded
contact homology and a version of Seiberg-Witten Floer cohomology as
defined by Kronheimer-Mrowka \cite{km}. Namely, if $Y$ is connected
then there is a canonical isomorphism of relatively graded
$\Z/2$-modules
\begin{equation}
\label{eqn:echswf}
ECH_*(Y,\lambda,\Gamma) \stackrel{\simeq}{\longrightarrow}
\widehat{HM}^{-*}(Y,\frak{s}_\xi+\op{PD}(\Gamma)),
\end{equation}
where the right hand side denotes Seiberg-Witten Floer cohomology with
$\Z/2$-coefficients, and $\frak{s}_\xi$ is a spin-c structure
determined by the contact structure.  (This is also true with $\Z$
coefficients.)  As shown in \cite{cc2}, it follows from Taubes's proof
of \eqref{eqn:echswf} and the invariance properties of
$\widehat{HM}$ that the versions of $ECH(Y,\lambda,\Gamma)$ defined
using different almost complex structures $J$ are canonically
isomorphic to each other. 

In this paper we are almost exclusively concerned with the case $\Gamma=0$.

\subsection{Some additional structure on ECH}

There is a canonical element
\[
c(\xi) \eqdef [\emptyset]\in ECH(Y,\lambda,0),
\]
called the {\em ECH contact invariant\/}, represented by the ECH
generator consisting of the empty set of Reeb orbits.  This is a cycle
in the ECH chain complex because any holomorphic curve counted by the
differential must have at least one positive end, c.f.\
\S\ref{sec:filtered} below.  The homology class $[\emptyset]$ depends
only on the contact structure $\xi$ (although not just on $Y$), and
agrees with an analogous contact invariant in Seiberg-Witten Floer
cohomology \cite{echswf5}.

If $Y$ is connected, then there is a degree $-2$ map
\begin{equation}
\label{eqn:U}
U:ECH(Y,\lambda,\Gamma) \longrightarrow ECH(Y,\lambda,\Gamma).
\end{equation}
This is induced by a chain map which is defined similarly to the
differential, but instead of counting holomorphic curves in $\R\times
Y$ with ECH index one modulo translation, it counts holomorphic curves
in $\R\times Y$ with ECH index two that pass through a chosen generic
point $z\in\R\times Y$, see \cite[\S2.5]{wh}.  Under the isomorphism
\eqref{eqn:echswf}, the $U$ map \eqref{eqn:U} agrees with an analogous
map on Seiberg-Witten Floer cohomology \cite{echswf5}.

If $(Y,\lambda)$ has connected components $(Y_i,\lambda_i)$ for
$i=1,\ldots,n$, then there are $n$ different $U$ maps
$U_1,\ldots,U_n$, where $U_i$ is defined by taking $z\in \R\times Y_i$.  The
different maps $U_i$ commute.  Note also that in this case one has a
canonical isomorphism of chain complexes
\begin{equation}
\label{eqn:tensorchain}
ECC(Y_1,\lambda_1,\Gamma_1,J_1)\tensor\cdots\tensor
ECC(Y_n,\lambda_n,\Gamma_n,J_n)
\stackrel{\simeq}{\longrightarrow} ECC(Y,\lambda,\Gamma,J),
\end{equation}
which sends a tensor product of ECH generators on the left hand side
to their union on the right, where $\Gamma=\sum_{i=1}^n\Gamma_i$ and
$J$ restricts to $J_i$ on $\R\times Y_i$.  Since we are working with
field coefficients, this gives a canonical isomorphism on homology
\begin{equation}
\label{eqn:tensor}
ECH(Y,\lambda,\Gamma) = ECH(Y_1,\lambda_1,\Gamma_1) \tensor \cdots \tensor
ECH(Y_n,\lambda_n,\Gamma_n).
\end{equation}
Under this identification, $U_i$ is the tensor product of the $U$ map
for $(Y_i,\lambda_i)$ with the identity maps on the other factors.

\subsection{Filtered ECH}
\label{sec:filtered}

If $\alpha=\{(\alpha_i,m_i)\}$ is a generator of the
ECH chain complex, its {\em symplectic action\/} is defined by
\[
\mc{A}(\alpha) \eqdef \sum_im_i\int_{\alpha_i}\lambda.
\]
The ECH differential (for any generic admissible $J$) decreases the
action, i.e. if $\langle\partial\alpha,\beta\rangle\neq 0$ then
$\mc{A}(\alpha)\ge\mc{A}(\beta)$.  This is because if $C$ is a
$J$-holomorphic curve counted by $\langle\partial\alpha,\beta\rangle$,
then $d\lambda|_C\ge 0$ everywhere.  (In fact if
$\langle\partial\alpha,\beta\rangle\neq 0$ then the strict inequality
$\mc{A}(\alpha)>\mc{A}(\beta)$ holds, because $d\lambda$ vanishes
identically on $C$ if and only if the image of $C$ is $\R$-invariant,
in which case $C$ has ECH index zero and so does not contribute to the
differential.)  Thus for any real number $L$, it makes sense to define
the {\em filtered ECH\/}
\[
ECH^{L}(Y,\lambda,\Gamma)
\]
to be the homology of the subcomplex $ECC^L(Y,\lambda,\Gamma,J)$ of
the ECH chain complex spanned by generators with action (strictly)
less than $L$.  It is shown in \cite{cc2} that
$ECH^{L}(Y,\lambda,\Gamma)$ does not depend on the choice of generic
admissible $J$ (although unlike the usual ECH it can change when one
deforms the contact form $\lambda$).  For $L<L'$ the inclusion of
chain complexes (for a given $J$) induces a map
\[
\imath_*:ECH^{L}(Y,\lambda,\Gamma) \longrightarrow
ECH^{L'}(Y,\lambda,\Gamma).
\]
It is shown in \cite{cc2} that this map does not depend on the choice
of $J$.  The usual ECH is recovered as the direct limit
\[
ECH(Y,\lambda,\Gamma) = \lim_{\to}ECH^{L}(Y,\lambda,\Gamma).
\]
Also, if $c$ is a positive constant, then there is a canonical
``scaling'' isomorphism
\begin{equation}
\label{eqn:scaling}
s:ECH^{L}(Y,\lambda,\Gamma) \stackrel{\simeq}{\longrightarrow}
 ECH^{cL}(Y,c\lambda,\Gamma).
\end{equation}
The reason is that an admissible almost complex structure $J$ for
$\lambda$ determines an admissible almost complex structure for
$c\lambda$, such that the obvious identification of Reeb orbits gives
an isomorphism of chain complexes.  Again, it is shown in \cite{cc2}
that the resulting map \eqref{eqn:scaling} does not depend on the
choice of $J$.

\subsection{Weakly exact symplectic cobordisms}
\label{sec:esc}

Let $(Y_+,\lambda_+)$ and $(Y_-,\lambda_-)$ be closed contact
3-manifolds.

\begin{definition}
\label{def:exact}
An {\em exact symplectic cobordism\/} from $(Y_+,\lambda_+)$ to
$(Y_-,\lambda_-)$ is a compact symplectic $4$-manifold $(X,\omega)$
with $\partial X = Y_+-Y_-$, such that there exists a $1$-form
$\lambda$ on $X$ with $d\lambda=\omega$ and
$\lambda|_{Y_\pm}=\lambda_\pm$.
\end{definition}

It is shown in \cite{cc2} that if the contact forms $\lambda_\pm$ are
nondegenerate, then an exact symplectic cobordism as above induces
maps of ungraded $\Z/2$-modules
\begin{equation}
\label{eqn:cc2}
\bigoplus_{\Gamma_+\in H_1(Y_+)}ECH^L(Y_+,\lambda_+,\Gamma_+) \longrightarrow
\bigoplus_{\Gamma_-\in H_1(Y_-)}ECH^L(Y_-,\lambda_-,\Gamma_-)
\end{equation}
satisfying various axioms.  The idea of the construction is as
follows.  Consider the ``symplectization completion'' of $X$ defined
by
\begin{equation}
\label{eqn:completion}
\overline{X} \eqdef ((-\infty,0]\times Y_-)\cup_{Y_-} X \cup_{Y_+}
([0,\infty)\times Y_+).
\end{equation}
As reviewed after Definition~\ref{def:wesc} below, the symplectic form
$\omega$ on $X$ naturally extends over $\overline{X}$ as
$d(e^s\lambda_-)$ on $(-\infty,0]\times Y_-$, where $s$ denotes the
$(-\infty,0]$ coordinate, and as $d(e^s\lambda_+)$ on
$[0,\infty)\times Y_+$.  A suitable almost complex structure $J$ on
$\overline{X}$ determines, via $\omega$, a metric on $\overline{X}$.
One then modifies $\omega$ and the metric on the ends to obtain a 2-form
$\hat{\omega}$ and a metric which are $\R$-invariant on the ends.  The map
\eqref{eqn:cc2} is now induced by a chain map which is defined by
counting solutions to the Seiberg-Witten equations on $\overline{X}$
perturbed using a large multiple of the $2$-form $\hat{\omega}$. In
the limit as the perturbation gets large, the relevant Seiberg-Witten
solutions give rise to (possibly broken) $J$-holomorphic curves in
$\overline{X}$.  The restriction of $\omega$ to any such
$J$-holomorphic curve is pointwise nonnegative.  The key fact needed
to get a well-defined map on filtered ECH is then that if $\alpha_\pm$
are smooth $1$-chains in $Y_\pm$, and if $Z$ is a smooth $2$-chain in
$X$ with $\partial Z = \alpha_+ - \alpha_-$, then
\begin{equation}
\label{eqn:keyfact}
\int_Z\omega = \int_{\alpha_+}\lambda_+ - \int_{\alpha_-}\lambda_-.
\end{equation}
Of course this holds by the exactness assumption and Stokes's theorem.

We now show that the $\Gamma_\pm=0$ component of the map
\eqref{eqn:cc2} can still be defined under a slightly weaker
assumption, in which we take $d$ of the last equation in
Definition~\ref{def:exact}:

\begin{definition}
\label{def:wesc}
A {\em weakly exact symplectic cobordism\/}
from $(Y_+,\lambda_+)$ to $(Y_-,\lambda_-)$ is a compact symplectic
$4$-manifold $(X,\omega)$ with $\partial X = Y_+ - Y_-$, such
that $\omega$ is exact and $\omega|_{Y_\pm} = d\lambda_\pm$.
\end{definition}

For example, a four-dimensional Liouville domain as we have defined it
is a weakly exact symplectic cobordism from a contact 3-manifold to
the empty set. Note that for any weakly exact symplectic cobordism $X$
as above, by a standard lemma there is an identification of a
neighborhood of $Y_+$ in $X$ with $(-\varepsilon,0]\times Y_+$ such
that on this neighborhood we have $\omega=d(e^s\lambda_+)$, where $s$
denotes the $(-\varepsilon,0]$ coordinate.  Likewise a neighborhood of
$Y_-$ in $X$ can be identified with $[0,\varepsilon)\times Y_-$ so
that on this neighborhood $\lambda=d(e^s\lambda_-)$.  Thus one can
still define the symplectization completion $\overline{X}$ as in
\eqref{eqn:completion}.

\begin{theorem}
\label{thm:cob}
Let $(X,\omega)$ be a weakly exact symplectic cobordism from
$(Y_+,\lambda_+)$ to $(Y_-,\lambda_-)$, where $Y_+$ and $Y_-$ are
closed and the contact forms $\lambda_\pm$ are nondegenerate.  Then
there exist maps
\begin{equation}
\label{eqn:philgo}
\Phi^L(X,\omega): ECH^L(Y_+,\lambda_+,0) \longrightarrow
ECH^L(Y_-,\lambda_-,0)
\end{equation}
of ungraded $\Z/2$-modules, for each $L\in\R$, with the following properties:
\begin{description}
\item{(a)}
If $L<L'$ then the following diagram commutes:
\[
\begin{CD}
ECH^{L}(Y_+,\lambda_+,0) @>{\Phi^L(X,\omega)}>>
ECH^{L}(Y_-,\lambda_-,0) \\
@V{\imath_*}VV @VV{\imath_*}V \\
ECH^{L'}(Y_+,\lambda_+,0) @>{\Phi^{L'}(X,\omega)}>>
ECH^{L'}(Y_-,\lambda_-,0).
\end{CD}
\]
\end{description}
In particular, it makes sense to define the direct limit
\begin{equation}
\label{eqn:Phi}
\Phi(X,\omega) \eqdef \lim_{\to}\Phi^L(X,\omega):
ECH(Y_+,\lambda_+,0) \longrightarrow ECH(Y_-,\lambda_-,0)
\end{equation}
\begin{description}
\item{(b)}
$\Phi(X,\omega)[\emptyset]=[\emptyset]$.
\item{(c)} If $X$ is diffeomorphic to a product $[0,1]\times Y$, then
  $\Phi(X,\omega)$ is an isomorphism.
\item{(d)}
The diagram
\[
\begin{CD}
ECH(Y_+,\lambda_+,0) @>{\Phi(X,\omega)}>> ECH(Y_-,\lambda_-,0)\\
@VV{U_+}V @VV{U_-}V\\
ECH(Y_+,\lambda_+,0) @>{\Phi(X,\omega)}>> ECH(Y_-,\lambda_-,0)
\end{CD}
\]
commutes, where $U_\pm$ is the $U$ map for any of the connected
components of $Y_\pm$, as long as $U_+$ and $U_-$ correspond to the
same component of $X$.
\end{description}
\end{theorem}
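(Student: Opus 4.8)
The plan is to obtain Theorem~\ref{thm:cob} by reducing the weakly exact case to the exact case already treated in \cite{cc2}, via a deformation argument on the symplectic form. The starting observation is that a weakly exact cobordism $(X,\omega)$ admits, by hypothesis, a primitive $\lambda$ with $d\lambda=\omega$; the only failure relative to Definition~\ref{def:exact} is that $\lambda|_{Y_\pm}$ need not equal $\lambda_\pm$, only that $d(\lambda|_{Y_\pm})=d\lambda_\pm$. Thus $\mu_\pm\eqdef \lambda|_{Y_\pm}-\lambda_\pm$ is a closed $1$-form on $Y_\pm$. The key point, and what makes the $\Gamma=0$ component survive, is that the estimate \eqref{eqn:keyfact} is exactly what the Seiberg-Witten construction of \cite{cc2} uses to control action, and for the empty-set class and, more generally, for $1$-chains that are null-homologous the correction terms $\int\mu_\pm$ vanish. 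So I would first record (a ``key fact'' lemma, the analogue of \eqref{eqn:keyfact}): if $\alpha_\pm$ are smooth $1$-chains in $Y_\pm$ representing $0$ in $H_1$ and $Z$ is a $2$-chain in $X$ with $\partial Z=\alpha_+-\alpha_-$, then $\int_Z\omega=\int_{\alpha_+}\lambda_+-\int_{\alpha_-}\lambda_-$, because $\int_Z\omega=\int_{\alpha_+}\lambda-\int_{\alpha_-}\lambda$ by Stokes and the extra terms $\int_{\alpha_\pm}\mu_\pm$ are zero since $\mu_\pm$ is closed and $\alpha_\pm$ is a boundary. This is the reason we are forced to restrict to $\Gamma_\pm=0$.

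With that in hand, the second step is to rerun the Seiberg-Witten construction from \cite{cc2} verbatim, but only on the $\Gamma_\pm=0$ summand. Concretely: form the completion $\overline X$ as in \eqref{eqn:completion} (legitimate here by the neighborhood normal-form remark following Definition~\ref{def:wesc}), choose a generic admissible $J$ agreeing with admissible almost complex structures $J_\pm$ on the ends, build the $\R$-invariant-on-the-ends $2$-form $\hat\omega$ and metric, and count solutions of the Seiberg-Witten equations perturbed by a large multiple of $\hat\omega$ in the spin-c structure $\frak s_\xi$ (i.e.\ $\op{PD}(\Gamma_\pm)=0$). Taubes's compactness produces broken $J$-holomorphic curves in $\overline X$ on which $\omega$ is pointwise nonnegative, and the ``key fact'' above controls their $\omega$-energy in terms of the actions at the ends — this is precisely what is needed for the chain map to respect the action filtration and descend to $\Phi^L$ on $ECH^L(\cdot,\cdot,0)$. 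Property~(a) is the tautological compatibility of these maps with enlarging $L$ (inclusion of subcomplexes), and hence $\Phi(X,\omega)$ is well-defined as a direct limit, giving \eqref{eqn:Phi}.

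The remaining properties are inherited from \cite{cc2} with essentially no change. For~(b), the empty set of Reeb orbits corresponds under \eqref{eqn:echswf} to the Seiberg-Witten contact/reducible class, and the cobordism map on Seiberg-Witten Floer cohomology sends it to the corresponding class, so $\Phi(X,\omega)[\emptyset]=[\emptyset]$. For~(c), when $X\cong[0,1]\times Y$ one can arrange a one-parameter family of weakly exact cobordism structures interpolating to the product/symplectization, and the composition/invariance axioms of the construction force $\Phi(X,\omega)$ to be an isomorphism (on $ECH(\cdot,\cdot,0)$). For~(d), the $U$ map is defined by the point constraint in $\R\times Y_\pm$, and the cobordism map commutes with it by the same broken-curve-gluing argument as in \cite{cc2}, once one notes that moving the marked point through $\overline X$ between the two ends exhibits $U_+$ and $U_-$ (for corresponding components of $X$) as the same geometric operation up to chain homotopy; this is where one uses that $U_\pm$ must correspond to the same component of $X$. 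I expect the only genuinely substantive step to be the \emph{first} one — isolating exactly where $\Gamma=0$ is used and verifying that with this restriction every estimate in the \cite{cc2} machinery (energy bounds, filtration, invariance of $ECH^L$) goes through unchanged; everything after that is a citation of, and minor bookkeeping on top of, the already-established exact case.
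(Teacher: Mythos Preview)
Your proposal is correct and follows essentially the same route as the paper: isolate the Stokes-theorem computation showing that \eqref{eqn:keyfact} holds when $\alpha_\pm$ is nullhomologous in $Y_\pm$ (exactly your ``key fact'' lemma with $\mu_\pm=\lambda|_{Y_\pm}-\lambda_\pm$ closed), and then rerun the Seiberg--Witten construction of \cite{cc2} on the $\Gamma_\pm=0$ summand, so that the action-filtration estimates go through and give $\Phi^L$ with property~(a).

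The one place your sketch diverges slightly is in~(c) and, to a lesser extent,~(d). The paper does not argue~(c) by interpolating the cobordism to a symplectization and invoking composition/homotopy-invariance axioms; instead it observes that $\Phi(X,\omega)$ agrees (via Taubes's isomorphism \eqref{eqn:echswf}) with the corresponding Seiberg--Witten Floer cobordism map, and then cites \cite{km} for the fact that the latter is an isomorphism for a product cobordism. Your interpolation argument is plausible but would require stating and using additional axioms from \cite{cc2} not listed in the theorem. For~(d), your moving-the-marked-point chain homotopy is exactly what the paper does in the disconnected case, though the paper phrases it in Seiberg--Witten terms (counting index~$1$ solutions on $\overline{X}$ with a codimension-$2$ constraint along a path, using the description of the $U$ map from \cite{echswf5}); in the connected case the paper again just defers to \cite{km}.
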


\begin{proof}
  Suppose first that $Y_+$ and $Y_-$ are connected and that
  $(X,\omega)$ is exact as in Definition~\ref{def:exact}.  In this
  case we define $\Phi^L(X,\omega)$ from the map \eqref{eqn:cc2} by
  restricting to the $\Gamma_+=0$ component and projecting to the
  $\Gamma_-=0$ component.  It follows from the main theorem in
  \cite{cc2} that $\Phi^L(X,\omega)$ satisfies properties (a) and (b),
  and $\Phi(X,\omega)$ agrees with the $\Gamma_\pm=0$ component of
  the induced map on Seiberg-Witten Floer cohomology via the
  isomorphisms \eqref{eqn:echswf} on both sides.  Items (c) and (d)
  then follow from analogous results in Seiberg-Witten Floer theory
  \cite{km}.

  If $(X,\omega)$ is only weakly exact, then one can no longer define
  a map \eqref{eqn:cc2}, but one can still define a map on
  $\Gamma_\pm=0$ components as in \eqref{eqn:philgo}, again by
  perturbing the Seiberg-Witten equations on the symplectization completion
  $\overline{X}$ using a large multiple of $\hat{\omega}$.  One just needs to
  check that \eqref{eqn:keyfact} holds when $\alpha_\pm$ is
  nullhomologous in $Y_\pm$.  To do so, let $\lambda$ be a $1$-form on
  $X$ with $d\lambda=\omega$.  Then by Stokes's theorem we have
  $\int_Z\omega = \int_{\alpha_+}\lambda - \int_{\alpha_-}\lambda$.
  On the other hand, since $\lambda|_{Y_\pm}-\lambda_\pm$ is a closed $1$-form
  on $Y_\pm$ and $\alpha_\pm$ is nullhomologous in $Y_\pm$, by
  Stokes's theorem again we have
  $\int_{\alpha_\pm}(\lambda-\lambda_\pm)=0$.  Properties (a)--(d)
  hold as before.

  When $Y_+$ and $Y_-$ are not required to be connected, one can still
  construct the maps $\Phi^L(X,\omega)$ and prove properties (a) and
  (b) by deforming the Seiberg-Witten equations on $\overline{X}$
  using a large multiple of $\hat{\omega}$ as above (and we already
  know property (c) in this case).  One can then prove property (d) by
  using the interpretation of the Seiberg-Witten $U$ map in
  \cite{echswf5} (which counts index $2$ Seiberg-Witten solutions in
  $\R\times Y$ satisfying a codimension $2$ constraint at a chosen
  point) to construct a chain homotopy between the chain maps defining
  $U_+\circ\Phi(X,\omega)$ and $\Phi(X,\omega)\circ U_-$ (by counting
  index $1$ Seiberg-Witten solutions in the completed cobordism
  satisfying a codimension $2$ constraint at any point along a
  suitable path).
\end{proof}

\section{Full ECH spectrum and capacities}
\label{sec:full}

We now introduce the full ECH spectrum and capacities, as a
warmup for the distinguished ECH spectrum and capacities to be defined
in \S\ref{sec:distinguished}.

\subsection{The full ECH spectrum}
\label{sec:ECHspectrum}

Let $Y$ be a closed oriented 3-manifold with a nondegenerate contact
form $\lambda$. 

\begin{definition}
\label{def:fullspectrum}
  For each positive integer $k$, define $\widetilde{c}_k(Y,\lambda)$
  to be the infimum over all $L\in\R$ such that the image of
  $ECH^L(Y,\lambda,0)$ in $ECH(Y,\lambda,0)$ has dimension at least $k$.
  The sequence $\{\widetilde{c}_k(Y,\lambda)\}_{k=1,2,\ldots}$ is
  called the {\em full ECH spectrum\/} of $(Y,\lambda)$.
\end{definition}

\begin{remark}
\label{rem:spec}
\begin{description}
\item{(a)}
It follows from the definition that
\[
0 \le \widetilde{c}_1(Y,\lambda)\le \widetilde{c}_2(Y,\lambda) \le
\cdots \le \infty.
\]
Note that if $Y$ is connected, then
$\widetilde{c}_k(Y,\lambda)<\infty$ for all $k$ if and only if
$c_1(\xi)\in H^2(Y;\Z)$ is torsion. This is because Taubes's
isomorphism \eqref{eqn:echswf}, together with results of
Kronheimer-Mrowka \cite{km}, imply that if $Y$ is connected, then
$ECH(Y,\lambda,\Gamma)$ is infinitely generated if and only if
$c_1(\xi)+2\op{PD}(\Gamma)\in H^2(Y;\Z)$ is torsion.
\item{(b)}
It follows immediately from the
definition that
\[
\widetilde{c}_1(Y,\lambda) >0 \Longleftrightarrow c(\xi)=0\in
ECH(Y,\lambda,0).
\]
\item{(c)} If $c$ is a positive constant, then $\widetilde{c}_k$
  satisfies the scaling property
\begin{equation}
\label{eqn:capacityScaling}
\widetilde{c}_k(Y,c\lambda) = c \cdot \widetilde{c}_k(Y,\lambda).
\end{equation}
This follows from the commutative diagram
\[
\begin{CD}
ECH^L(Y,\lambda,0) @>>> ECH(Y,\lambda,0)\\
@V{s}V{\simeq}V @V{s}V{\simeq}V \\
ECH^{cL}(Y,c\lambda,0) @>>> ECH(Y,c\lambda,0),
\end{CD}
\]
where $s$ is the scaling isomorphism \eqref{eqn:scaling}.  And
commutativity of the above diagram is immediate from the definitions.
\item{(d)} One can also define analogues of the full ECH spectrum
  using $ECH(Y,\lambda,\Gamma)$ for $\Gamma\neq 0$.  However
  restricting to $\Gamma$ torsion is necessary to obtain well-defined
  capacities, see Lemma~\ref{lem:dlambda} below.
\end{description}
\end{remark}

\begin{lemma}
\label{lem:monotone1}
  Let $(X,\omega)$ be a weakly exact symplectic cobordism from
  $(Y_+,\lambda_+)$ to $(Y_-,\lambda_-)$.  Assume that the contact
  forms $\lambda_\pm$ are nondegenerate and that $X$ is diffeomorphic
  to a product $[0,1]\times Y$.  Then for every positive integer
  $k$ we have
\[
\widetilde{c}_k(Y_-,\lambda_-) \le \widetilde{c}_k(Y_+,\lambda_+).
\]
\end{lemma}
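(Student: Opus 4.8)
The plan is to exploit the cobordism maps $\Phi^L(X,\omega)$ provided by Theorem~\ref{thm:cob}, together with property~(c) of that theorem, which tells us that $\Phi(X,\omega)$ is an isomorphism on $ECH$ when $X$ is a product. The core observation is that the maps $\Phi^L$ are compatible with the inclusion-induced maps $\imath_*$ on filtered $ECH$ (property~(a)), so they fit into a ladder of commuting squares relating $ECH^L(Y_+,\lambda_+,0)\to ECH(Y_+,\lambda_+,0)$ to $ECH^L(Y_-,\lambda_-,0)\to ECH(Y_-,\lambda_-,0)$. Once one has this ladder, a dimension count on the images of the filtered groups inside the limit will yield the desired inequality on the spectral numbers.

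Concretely, first I would fix $L\in\R$ and write $V_+^L$ for the image of $ECH^L(Y_+,\lambda_+,0)$ in $ECH(Y_+,\lambda_+,0)$, and similarly $V_-^L$ on the negative end. Taking the direct limit of the diagram in property~(a) over increasing $L$ gives a commuting square
\[
\begin{CD}
ECH^L(Y_+,\lambda_+,0) @>{\Phi^L}>> ECH^L(Y_-,\lambda_-,0)\\
@VVV @VVV\\
ECH(Y_+,\lambda_+,0) @>{\Phi(X,\omega)}>> ECH(Y_-,\lambda_-,0),
\end{CD}
\]
where the vertical maps are the canonical maps to the direct limit. Chasing an element of $V_-^L$ around this square: it is the image of some $x\in ECH^L(Y_-,\lambda_-,0)$; since $\Phi(X,\omega)$ is an isomorphism by property~(c), every element of $ECH(Y_-,\lambda_-,0)$ in particular $\imath_*x$ is hit, so $\Phi(X,\omega)$ restricts to an injection $V_+^L\hookrightarrow ECH(Y_-,\lambda_-,0)$ whose image \emph{contains} $V_-^L$ — wait, the direction needs care, so more precisely I would argue that $\Phi(X,\omega)$ maps $V_+^L$ \emph{into} $V_-^L$ (since $\Phi^L$ lands in $ECH^L(Y_-,\lambda_-,0)$ by construction and the square commutes) and, being injective, this gives $\dim V_+^L\le \dim V_-^L$.

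The inequality $\dim V_+^L\le \dim V_-^L$ for every $L$ is exactly what is needed: if $L>\widetilde c_k(Y_+,\lambda_+)$ then $\dim V_+^L\ge k$ by Definition~\ref{def:fullspectrum}, hence $\dim V_-^L\ge k$, hence $L\ge \widetilde c_k(Y_-,\lambda_-)$; taking the infimum over such $L$ gives $\widetilde c_k(Y_-,\lambda_-)\le \widetilde c_k(Y_+,\lambda_+)$. A minor point to handle is that Definition~\ref{def:fullspectrum} uses strict action inequalities in defining $ECH^L$, so one should be slightly careful with the infimum, but this is routine.

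The main obstacle is pinning down the correct direction of the comparison $\Phi(X,\omega)(V_+^L)\subseteq V_-^L$ and confirming that $\Phi(X,\omega)$ being an isomorphism forces this to be an \emph{injection} on the finite-dimensional subspace $V_+^L$ — in other words, that no cancellation can occur among images of low-action classes. This follows because $\Phi(X,\omega)$ is globally injective, so its restriction to any subspace is injective; the only real content is the commuting-square bookkeeping that places the image inside $V_-^L$. Everything else is formal.
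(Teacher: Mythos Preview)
Your proposal is correct and follows essentially the same route as the paper: fix $L$, let $V_\pm^L$ be the images of $ECH^L(Y_\pm,\lambda_\pm,0)$ in $ECH(Y_\pm,\lambda_\pm,0)$, use the commuting square from Theorem~\ref{thm:cob}(a) to get $\Phi(X,\omega)(V_+^L)\subset V_-^L$, and then use that $\Phi(X,\omega)$ is an isomorphism (Theorem~\ref{thm:cob}(c)) to conclude $\dim V_+^L\le\dim V_-^L$. The paper's write-up simply omits your momentary detour about ``the direction needs care'' and states the containment $\Phi(X,\omega)(I_+)\subset I_-$ directly.
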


\begin{proof}
  Fix $L\in\R$ and let $I_\pm$ denote the image of
  $ECH^L(Y_\pm,\lambda_\pm,0)$ in $ECH(Y_\pm,\lambda_\pm,0)$.  We need to
  show that $\dim(I_-)\ge\dim(I_+)$.  By Theorem~\ref{thm:cob}(a) we have
  a commutative diagram
\begin{equation}
\label{eqn:bd}
\begin{CD}
ECH^L(Y_+,\lambda_+,0) @>>> ECH(Y_+,\lambda_+,0) \\
@VV{\Phi^L(X,\omega)}V @VV{\Phi(X,\omega)}V \\
ECH^L(Y_-,\lambda_-,0) @>>> ECH(Y_-,\lambda_-,0).
\end{CD}
\end{equation}
It follows from this diagram that $\Phi(X,\omega)(I_+)\subset I_-$.
By Theorem~\ref{thm:cob}(c) the map $\Phi(X,\omega)$ is an
isomorphism, so $\dim(I_+)\le\dim(I_-)$ as desired.
\end{proof}

We now extend the definition of the full ECH spectrum to arbitrary
(possibly degenerate) contact forms $\lambda$ on $Y$.

\begin{definition}
\label{def:supinf}
Let $(Y,\lambda)$ be any closed contact 3-manifold.  Define
\begin{equation}
\label{eqn:supinf}
\widetilde{c}_k(Y,\lambda) \eqdef
\sup\{\widetilde{c}_k(Y,f_-\lambda)\} =
\inf\{\widetilde{c}_k(Y,f_+\lambda)\},
\end{equation}
where the supremum is over smooth functions $f_-:Y\to(0,1]$ such that
the contact form $f_-\lambda$ is nondegenerate, and the infimum is
over smooth functions $f_+:Y\to[1,\infty)$ such that $f_+\lambda$ is
nondegenerate.
\end{definition}

To confirm that this definition makes sense, we have:

\begin{lemma}
\label{lem:ckpm}
The supremum and infimum in \eqref{eqn:supinf} are equal.
\end{lemma}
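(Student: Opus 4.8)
The plan is to prove Lemma~\ref{lem:ckpm} by exhibiting, for contact forms ordered pointwise, a monotonicity of the full ECH spectrum, and then using density of nondegenerate perturbations to squeeze the sup and inf together. First I would establish the following monotonicity statement: if $\lambda$ and $\lambda'$ are nondegenerate contact forms on $Y$ with the same kernel and with $\lambda \le \lambda'$ pointwise (meaning $\lambda' = f\lambda$ for a smooth $f : Y \to [1,\infty)$), then $\widetilde{c}_k(Y,\lambda) \le \widetilde{c}_k(Y,\lambda')$ for every $k$. The key point is that under these hypotheses there is a weakly exact symplectic cobordism from $(Y,\lambda')$ to $(Y,\lambda)$ which is diffeomorphic to a product $[0,1]\times Y$; indeed one takes $X = \{(s,y) : y \in Y,\ e^s \le f(y)\} \subset \R \times Y$ — or more carefully a smoothing thereof — equipped with the symplectic form $d(e^s\lambda)$, which is exact with the right boundary restrictions. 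Once this cobordism is in place, Lemma~\ref{lem:monotone1} applies directly and gives $\widetilde{c}_k(Y,\lambda) \le \widetilde{c}_k(Y,\lambda')$.

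Granting this monotonicity, the equality of the supremum and infimum in \eqref{eqn:supinf} follows quickly. Write $S_k = \sup\{\widetilde{c}_k(Y,f_-\lambda)\}$ and $I_k = \inf\{\widetilde{c}_k(Y,f_+\lambda)\}$ over $f_-:Y\to(0,1]$ and $f_+:Y\to[1,\infty)$ with the respective products nondegenerate. For any admissible $f_-$ and $f_+$ we have $f_-\lambda \le f_+\lambda$ pointwise, so monotonicity gives $\widetilde{c}_k(Y,f_-\lambda) \le \widetilde{c}_k(Y,f_+\lambda)$; taking the sup over $f_-$ and the inf over $f_+$ yields $S_k \le I_k$. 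For the reverse inequality, fix $\epsilon > 0$. The set of functions $Y \to (0,\infty)$ for which the rescaled contact form is nondegenerate is dense (in the $C^\infty$ topology), so one can pick a nondegenerate $g$ with $1 \le g \le 1 + \epsilon$ pointwise; then $g\lambda$ is an admissible choice for the infimum, so $I_k \le \widetilde{c}_k(Y,g\lambda)$. Similarly pick a nondegenerate $h$ with $1 - \epsilon \le h \le 1$; then $S_k \ge \widetilde{c}_k(Y,h\lambda)$. Now $h\lambda \le \tfrac{1}{1-\epsilon} \cdot (1-\epsilon)\,g\lambda \cdots$ — more simply, after rescaling by the constant $(1+\epsilon)$ and using the scaling property \eqref{eqn:capacityScaling} together with monotonicity applied to $h\lambda \le (1+\epsilon)\,g\lambda$ (which holds since $h \le 1 \le g \le 1+\epsilon \le (1+\epsilon)g$), one gets $\widetilde{c}_k(Y,h\lambda) \le (1+\epsilon)\,\widetilde{c}_k(Y,g\lambda)$. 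Hence $S_k \ge \widetilde{c}_k(Y,h\lambda)$ while $I_k \le \widetilde{c}_k(Y,g\lambda) \le \tfrac{1}{1+\epsilon}\widetilde{c}_k(Y,h\lambda) \le \tfrac{1}{1+\epsilon} S_k \cdot (1+\epsilon)$… letting $\epsilon\to 0$ and combining the estimates forces $I_k \le S_k$. (If $S_k = \infty$ the claim is trivial; the finite case is what the above handles.)

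The main obstacle I expect is the careful construction of the product cobordism realizing the monotonicity — specifically, checking that the region between the graph of $e^s = f(y)$ and the slice $s = 0$ inside $\R \times Y$, with symplectic form $d(e^s\lambda)$, is genuinely a smooth weakly exact symplectic cobordism with boundary components $(Y, f\lambda)$ and $(Y, \lambda)$ and is diffeomorphic to $[0,1]\times Y$. One must verify that $d(e^s\lambda)$ is symplectic on this region (it is nondegenerate wherever $\lambda$ is a contact form, since $d(e^s\lambda) = e^s(ds\wedge\lambda + d\lambda)$ and $(d(e^s\lambda))^2 = e^{2s}\cdot 2\, ds\wedge\lambda\wedge d\lambda > 0$), and that its restriction to the graph $\{e^s = f\}$ is $d(f\lambda)$ while its restriction to $\{s=0\}$ is $d\lambda$ — these follow because on a graph $s = \log f(y)$ the form $e^s\lambda$ pulls back to $f\lambda$. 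Exactness is immediate since the global primitive $e^s\lambda$ restricts correctly. The only remaining subtlety is orientations and the product diffeomorphism, both of which are routine once the picture is set up; I would phrase this as a short lemma and cite the "standard lemma" on collar neighborhoods already invoked after Definition~\ref{def:wesc}.
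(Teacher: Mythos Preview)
Your overall strategy matches the paper's: monotonicity of $\widetilde{c}_k$ via a product cobordism gives $\sup\le\inf$, and density of nondegenerate perturbations together with the scaling property gives $\inf\le\sup$. Two comments.

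For the cobordism, the paper avoids your smoothing/compactness issues by writing the cobordism from $(Y,f_+\lambda)$ to $(Y,f_-\lambda)$ directly as $[0,1]\times Y$ with primitive $((1-s)f_-+sf_+)\lambda$; this is manifestly a product with the correct boundary restrictions, and is symplectic wherever $f_+>f_-$. (Your region $X=\{(s,y):e^s\le f(y)\}$ as written is noncompact; you mean $\{0\le s\le\log f(y)\}$, which still degenerates where $f=1$.)

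More seriously, your argument for $I_k\le S_k$ contains a reversed inequality. From $h\lambda\le(1+\epsilon)g\lambda$ and monotonicity you correctly obtain $\widetilde{c}_k(Y,h\lambda)\le(1+\epsilon)\,\widetilde{c}_k(Y,g\lambda)$, but you then write $\widetilde{c}_k(Y,g\lambda)\le\tfrac{1}{1+\epsilon}\widetilde{c}_k(Y,h\lambda)$, which is the opposite. The repair is to compare in the other direction: since $g\le 1+\epsilon$ and $h\ge 1-\epsilon$, one has $g\le\tfrac{1+\epsilon}{1-\epsilon}h$, hence
\[
I_k\le\widetilde{c}_k(Y,g\lambda)\le\tfrac{1+\epsilon}{1-\epsilon}\,\widetilde{c}_k(Y,h\lambda)\le\tfrac{1+\epsilon}{1-\epsilon}\,S_k.
\]
The paper does this more cleanly with a single perturbation: pick $\phi:Y\to(0,\varepsilon)$ with $f_+\eqdef e^\phi$ nondegenerate, set $f_-\eqdef e^{-\varepsilon}f_+$ (automatically nondegenerate and $\le 1$), and use scaling to get $I_k\le\widetilde{c}_k(Y,f_+\lambda)=e^\varepsilon\,\widetilde{c}_k(Y,f_-\lambda)\le e^\varepsilon S_k$. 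Your second function $h$ is unnecessary.
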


\begin{proof}
  We first show that $\sup\{\widetilde{c}_k(Y,f_-\lambda)\} \le
  \inf\{\widetilde{c}_k(Y,f_+\lambda)\}$.  If $f_-,f_+$ are as in
  Definition~\ref{def:supinf}, then
\[
\left([0,1]\times
  Y,d(((1-s)f_-+sf_+)\lambda)\right)
\]
is an exact symplectic cobordism from
  $(Y,f_+\lambda)$ to $(Y,f_-\lambda)$, where $s$ denotes the $[0,1]$
  coordinate.  Thus by Lemma~\ref{lem:monotone1} we have
  $\widetilde{c}_k(Y,f_-\lambda)\le \widetilde{c}_k(Y,f_+\lambda)$.

  We now show that $\sup\{\widetilde{c}_k(Y,f_-\lambda)\} \ge
  \inf\{\widetilde{c}_k(Y,f_+\lambda)\}$.  Fix $\varepsilon>0$.  We
  can find a function $\phi:Y\to(0,\varepsilon)$ such that if $f_+ =
  e^\phi$, then the contact form $f_+\lambda$ is nondegenerate.
  Define $f_-=e^{-\varepsilon}f_+$.  Then by the scaling property
  \eqref{eqn:capacityScaling} we have
\[
\widetilde{c}_k(Y,f_+\lambda) = e^{\varepsilon} \widetilde{c}_k(Y,f_-\lambda).
\]
Thus $\inf\{\widetilde{c}_k(Y,f_+\lambda)\} \le e^{\varepsilon}
\sup\{\widetilde{c}_k(Y,f_-\lambda)\}$.  Now take $\epsilon\to 0$.
\end{proof}

Lemma~\ref{lem:monotone1} then extends to the possibly degenerate case:

\begin{proposition}
\label{prop:monotone2}
Let $(X,\omega)$ be a weakly exact symplectic cobordism from
$(Y_+,\lambda_+)$ to $(Y_-,\lambda_-)$.  Assume that $X$ is
diffeomorphic to a product $[0,1]\times Y$.  Then for every
positive integer $k$ we have
\[
\widetilde{c}_k(Y_-,\lambda_-) \le \widetilde{c}_k(Y_+,\lambda_+).
\]
\end{proposition}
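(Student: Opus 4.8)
The plan is to reduce the possibly-degenerate statement to the nondegenerate case already established in Lemma~\ref{lem:monotone1}, by inserting auxiliary nondegenerate perturbations of $\lambda_\pm$ at the two ends of the cobordism and comparing via the sup/inf characterization of $\widetilde{c}_k$ from Definition~\ref{def:supinf}.

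First I would fix a positive integer $k$ and a small $\varepsilon>0$, and choose a smooth function $f_+:Y_+\to[1,1+\varepsilon)$ such that $f_+\lambda_+$ is nondegenerate, together with a smooth function $f_-:Y_-\to(1-\varepsilon,1]$ such that $f_-\lambda_-$ is nondegenerate. (That such functions exist is a standard genericity fact about nondegeneracy of contact forms, and is implicitly used already in Definition~\ref{def:supinf}.) Next I would modify $\omega$ near the two ends of $X$ so as to build a new weakly exact symplectic cobordism: near $Y_+$, where $\omega=d(e^s\lambda_+)$ on $(-\varepsilon',0]\times Y_+$, I would interpolate so that $\omega$ becomes $d(e^s f_+\lambda_+)$ near the $+$ end; similarly near $Y_-$ I would arrange that the primitive restricts to $f_-\lambda_-$. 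The point is that since $f_+\ge 1$ and $f_-\le 1$, such an interpolation can be carried out while keeping $\omega$ symplectic (one is enlarging the collar at the convex end and shrinking it at the concave end, which is the same local move used in the proof of Lemma~\ref{lem:ckpm}), and the result is a weakly exact symplectic cobordism $(X',\omega')$ from $(Y_+,f_+\lambda_+)$ to $(Y_-,f_-\lambda_-)$ which is still diffeomorphic to $[0,1]\times Y$. Applying Lemma~\ref{lem:monotone1} to $(X',\omega')$ gives
\[
\widetilde{c}_k(Y_-,f_-\lambda_-) \le \widetilde{c}_k(Y_+,f_+\lambda_+).
\]

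Finally I would take limits. Letting $f_-$ range over all admissible perturbations $Y_-\to(1-\varepsilon,1]$ (equivalently, rescaling so that these are of the form $(0,1]$-valued after absorbing a constant) and taking the supremum, and letting $f_+$ range over admissible perturbations and taking the infimum, Definition~\ref{def:supinf} and Lemma~\ref{lem:ckpm} identify the two sides with $\widetilde{c}_k(Y_-,\lambda_-)$ and $\widetilde{c}_k(Y_+,\lambda_+)$ respectively; here one should also invoke the scaling property \eqref{eqn:capacityScaling} to pass between functions valued in $(1-\varepsilon,1]$ and functions valued in $(0,1]$, and then let $\varepsilon\to 0$. This yields $\widetilde{c}_k(Y_-,\lambda_-)\le \widetilde{c}_k(Y_+,\lambda_+)$, as desired.

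The main obstacle is the middle step: verifying that $\omega$ really can be deformed near the two ends of $X$ to a symplectic form $\omega'$ realizing the boundary conditions $f_+\lambda_+$ and $f_-\lambda_-$ while staying weakly exact and while not changing the diffeomorphism type of $X$. Everything else is bookkeeping with the sup/inf definition. Concretely, near the $+$ end one writes $\omega = d(e^s\lambda_+)$ on a collar and seeks a function $g(s,y)$ with $g(0,y)=f_+(y)$, $g(s,y)=e^s$ for $s$ near $-\varepsilon'$, and $d(g\,\lambda_+)$ symplectic; since $f_+\ge 1=e^0$, one can take $g$ monotone in $s$ and the two-form stays nondegenerate (this is exactly the computation underlying the exact cobordism $([0,1]\times Y, d(((1-s)f_-+sf_+)\lambda))$ used in Lemma~\ref{lem:ckpm}, now applied one end at a time). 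The concave end is handled symmetrically using $f_-\le 1$. I would state this collar-modification as the one nontrivial lemma and otherwise keep the argument brief.
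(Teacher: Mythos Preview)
Your proposal is correct and follows essentially the same approach as the paper: reduce to Lemma~\ref{lem:monotone1} by building a product cobordism from $(Y_+,f_+\lambda_+)$ to $(Y_-,f_-\lambda_-)$ with $f_\pm$ nondegenerate, then invoke the sup/inf characterization in Definition~\ref{def:supinf}. The only difference is that the paper avoids your ``nontrivial lemma'' about interpolating inside the existing collars: it simply attaches the external symplectization pieces $\{(s,y):1\le e^s\le f_+(y)\}\subset [0,\infty)\times Y_+$ and $\{(s,y):f_-(y)\le e^s\le 1\}\subset(-\infty,0]\times Y_-$ (taken inside the completion $\overline{X}$) to the two ends of $X$, which are manifestly exact symplectic product cobordisms, so no interpolation argument is needed.
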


\begin{proof}
  If $f_+$ and $f_-$ are
  functions as in Definition~\ref{def:supinf}, then
\[
(\{(s,y)\in \R\times Y_+ \mid 1\le e^s\le f_+(y)\},d(e^s\lambda_+))
\]
is an exact symplectic cobordism from $(Y_+,f_+\lambda_+)$ to
$(Y_+,\lambda)$, and
\[
(\{(s,y)\in\R\times Y_-\mid f_-(y)\le e^s\le 1\},d(e^s\lambda_-))
\]
is an exact symplectic cobordism from $(Y_-,\lambda_-)$ to
$(Y_-,f_-\lambda_-)$.  Attaching these cobordisms to the positive and
negative boundaries of $X$ defines a subset of the symplectization
completion \eqref{eqn:completion} which is a weakly exact symplectic
cobordism, diffeomorphic to a product, from $(Y_+,f_+\lambda_+)$ to
$(Y_-,f_-\lambda_-)$.  By Lemma~\ref{lem:monotone1} we have
\[
\widetilde{c}_k(Y_-,f_-\lambda_-) \le \widetilde{c}_k(Y_+,f_+\lambda_+).
\]
Taking the supremum over $f_-$ on the left hand side and the infimum
over $f_+$ on the right hand side completes the proof.
\end{proof}

\subsection{Full ECH capacities}
\label{sec:fech}

\begin{definition}
\label{def:capacity}
Let $(X,\omega)$ be a 4-dimensional Liouville domain with boundary
$Y$. If $k$ is a positive integer, define
\[
\widetilde{c}_k(X,\omega) \eqdef \widetilde{c}_k(Y,\lambda),
\]
where $\lambda$ is a contact form on $Y$ with $d\lambda=\omega|_Y$.
We call the numbers $\{\widetilde{c}_k(X,\omega)\}_{k=1,2,\ldots}$ the
{\em full ECH capacities\/} of $(X,\omega)$.
\end{definition}

\begin{lemma}
\label{lem:capacityDefined}
$\widetilde{c}_k(X,\omega)$ does not depend on the choice of contact
form $\lambda$.
\end{lemma}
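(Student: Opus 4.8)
The plan is to reduce the independence of $\widetilde c_k(X,\omega)$ from the choice of contact form to the monotonicity result Proposition~\ref{prop:monotone2}, applied in both directions. Suppose $\lambda$ and $\lambda'$ are two contact forms on $Y=\partial X$ with $d\lambda=d\lambda'=\omega|_Y$. Then $\lambda-\lambda'$ is a closed $1$-form on $Y$. The key geometric observation is that, using a collar neighborhood $(-\varepsilon,0]\times Y$ of $Y$ in $X$ on which $\omega=d(e^s\lambda)$, one can construct a weakly exact symplectic cobordism, diffeomorphic to a product $[0,1]\times Y$, that interpolates between $(Y,\lambda)$ and $(Y,\lambda')$. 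Concretely, inside the symplectization completion one takes the region swept out by a path of $1$-forms $\lambda_t = (1-t)\lambda + t\lambda'$ for $t\in[0,1]$; since $d\lambda_t = \omega|_Y$ for all $t$, each $\lambda_t$ has the same exterior derivative, and (possibly after a small perturbation to ensure the contact condition and nondegeneracy, as in the proofs of Lemmas~\ref{lem:ckpm} and \ref{prop:monotone2}) this produces a product cobordism whose symplectic form is exact. Feeding this into Proposition~\ref{prop:monotone2} once with $(Y,\lambda)$ on top and once with $(Y,\lambda')$ on top yields both inequalities $\widetilde c_k(Y,\lambda)\le\widetilde c_k(Y,\lambda')$ and $\widetilde c_k(Y,\lambda')\le\widetilde c_k(Y,\lambda)$, hence equality.

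The main point to be careful about is that $\lambda_t$ need not be a contact form for intermediate $t$: a convex combination of two contact forms with the same $d\lambda$ is contact precisely when $\lambda_t\wedge\omega|_Y>0$, which holds near $t=0$ and $t=1$ but not necessarily in between. I would handle this exactly as in the excerpt's earlier arguments: instead of a literal straight-line homotopy, use that $\lambda$ and $\lambda'$ both define the same cooriented contact structure $\xi=\ker\lambda$ if $\lambda'=f\lambda$ for a positive function $f$ — but in general $\lambda'$ need not be a positive multiple of $\lambda$. The cleanest fix is to observe that any two contact forms $\lambda,\lambda'$ on $\partial X$ with $d\lambda=d\lambda'=\omega|_{\partial X}$ can be connected \emph{inside} the symplectization region of one of them: since $\omega=d(e^s\lambda)$ on the collar, the hypersurface $\{s=\text{const}\}$ carries the contact form $e^s\lambda$, and more generally any graph $\{s=g(y)\}$ over $Y$ carries the contact form $e^{g}\lambda|_{\text{graph}}$, which ranges over all $1$-forms of the shape $e^g\lambda + e^g\,dg$. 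One checks that $\lambda'$ arises this way for a suitable function $g$ (solving $e^g\lambda + d(e^g)=\lambda'$, equivalently $d(e^g) = \lambda'-e^g\lambda$, which is solvable because $\lambda'-\lambda$ is exact on $Y$ — using that $\lambda'-\lambda$ closed and $H^1$ obstructions vanish after the substitution, or by working on the universal cover and noting periodicity). Then the region between the graph of $g$ and $\{s=0\}$ inside the completion is the desired product cobordism carrying the exact form $d(e^s\lambda)$.

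Thus the key step — and the place where a small amount of genuine work is needed — is the construction of this interpolating product cobordism between $(Y,\lambda)$ and $(Y,\lambda')$; once it is in hand, Proposition~\ref{prop:monotone2} applied twice gives the result immediately. I would present the cobordism construction explicitly (solving for the graph function $g$, noting that the contact and nondegeneracy conditions can be arranged by a $C^\infty$-small perturbation invisible to the sup/inf in Definition~\ref{def:supinf}), then invoke Proposition~\ref{prop:monotone2} and conclude. This also makes clear why the definition of $\widetilde c_k(X,\omega)$ in Definition~\ref{def:capacity} is well-posed and why the analogous statement for the distinguished capacities will follow by the same argument once Theorem~\ref{thm:main}'s cobordism machinery is set up.
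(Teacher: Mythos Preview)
Your strategy has a genuine obstruction. You want a weakly exact symplectic product cobordism $(X,\omega)$ from $(Y,\lambda)$ to $(Y,\lambda')$ when $d\lambda=d\lambda'$, but no such cobordism exists. Indeed, for any weakly exact cobordism one computes, using Stokes and the fact that (closed)$\wedge$(exact) integrates to zero over a closed $3$-manifold, that $\int_X\omega^2=\op{vol}(Y_+,\lambda_+)-\op{vol}(Y_-,\lambda_-)$. But $\op{vol}(Y,\lambda)-\op{vol}(Y,\lambda')=\int_Y(\lambda-\lambda')\wedge d\lambda=0$ since $\lambda-\lambda'$ is closed and $d\lambda$ is exact. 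Hence $\int_X\omega^2=0$, contradicting that $\omega$ is symplectic on a nonempty compact $4$-manifold. So neither your straight-line homotopy nor any other construction can produce the cobordism you want. Your graph construction also contains a miscalculation: the pullback of $e^s\lambda$ to the graph $\{s=g(y)\}$ is simply $e^g\lambda$, with no $dg$ term, so graphs in the symplectization only realize contact forms of the shape $f\lambda$ with $f>0$, never a $\lambda'$ that is not a positive multiple of $\lambda$. (Also, $\lambda'-\lambda$ is only closed, not exact in general.)

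The paper proceeds quite differently: it proves Lemma~\ref{lem:dlambda}, which gives a direct chain-level isomorphism $ECC(Y,\lambda,0,J)\simeq ECC(Y,\lambda',0,J')$ preserving the action filtration. The point is that since $d\lambda=d\lambda'$, the Reeb fields satisfy $R'=fR$ for a positive function $f$, so the Reeb orbits and ECH generators are literally the same; the diffeomorphism $(s,y)\mapsto(f(y)s,y)$ of $\R\times Y$ carries admissible $J$ to admissible $J'$, and the actions agree on nullhomologous generators by Stokes. This bypasses cobordisms entirely. If you want to rescue a cobordism-style argument, note that the \emph{same} slab $([0,\epsilon]\times Y, d(e^s\lambda))$ is, by Definition~\ref{def:wesc}, a weakly exact product cobordism from $(Y,e^\epsilon\lambda)$ to $(Y,\lambda')$ (since $\omega|_{s=0}=d\lambda=d\lambda'$), yielding $\widetilde c_k(Y,\lambda')\le e^\epsilon\widetilde c_k(Y,\lambda)$; then let $\epsilon\to 0$ and use symmetry. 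But that is a different construction from the one you proposed.
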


\begin{proof}
  Let $\lambda'$ be another contact form on $Y$ with $d\lambda'=\omega|_Y$.
We need to show that
\begin{equation}
\label{eqn:capacityDefined}
\widetilde{c}_k(Y,\lambda)=\widetilde{c}_k(Y,\lambda').
\end{equation}
By modifying $X$ slightly as in the proof of
Proposition~\ref{prop:monotone2}, we may assume that $\lambda$ and
$\lambda'$ are nondegenerate.  Equation \eqref{eqn:capacityDefined}
then follows immediately from Definition~\ref{def:fullspectrum} and
Lemma~\ref{lem:dlambda} below.
\end{proof}

\begin{lemma}
\label{lem:dlambda}
Let $Y$ be a closed oriented $3$-manifold.  Let $\lambda, \lambda'$ be
nondegenerate contact forms on $Y$ with $d\lambda=d\lambda'$.  Then
there is an isomorphism $ECH(Y,\lambda,0)\simeq ECH(Y,\lambda',0)$,
which is the direct limit of isomorphisms
\[
ECH^L(Y,\lambda,0) \simeq ECH^L(Y,\lambda',0),
\]
and which respects the $U$ maps.
\end{lemma}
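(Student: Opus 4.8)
The plan is to deform $\lambda$ to $\lambda'$ through contact forms with the same differential, and to observe that along such a deformation, in the class $\Gamma=0$, essentially nothing happens: neither the generating set nor the symplectic actions change, so filtered ECH is constant. The cobordism maps of \cite{cc2} are not the right tool here, precisely because a symplectic cobordism between $(Y,\lambda)$ and $(Y,\lambda')$ is forced to carry a rescaling and so only produces an action-shifted comparison; the point of the hypothesis $d\lambda=d\lambda'$ is exactly that it freezes the relevant actions.

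Set $\lambda_t=(1-t)\lambda+t\lambda'$ for $t\in[0,1]$. Then $d\lambda_t=d\lambda$, and $\lambda_t\wedge d\lambda_t=(1-t)\,\lambda\wedge d\lambda+t\,\lambda'\wedge d\lambda'>0$ pointwise, so each $\lambda_t$ is a contact form joining $\lambda$ to $\lambda'$. The Reeb vector field $R_t$ of $\lambda_t$ lies in $\ker(d\lambda_t)=\ker(d\lambda)$, hence is a positive function times $R_0$; consequently the flows of the $R_t$ have the same orbits, the linearized return map along any closed orbit is (up to conjugation) independent of $t$, and so $\lambda$ nondegenerate forces every $\lambda_t$ to be nondegenerate, with the same embedded Reeb orbits and the same elliptic/hyperbolic types. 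Thus the ECH generators of $(Y,\lambda_t,0)$ form a set independent of $t$. Moreover, since $\lambda'-\lambda$ is closed, for any such generator $\alpha=\{(\alpha_i,m_i)\}$ (with $\sum_i m_i[\alpha_i]=0$) we have
\[
\mc{A}_{\lambda_t}(\alpha)=(1-t)\,\mc{A}_\lambda(\alpha)+t\,\mc{A}_{\lambda'}(\alpha),\qquad \mc{A}_{\lambda'}(\alpha)-\mc{A}_\lambda(\alpha)=\Big\langle[\lambda'-\lambda],\sum_i m_i[\alpha_i]\Big\rangle=0,
\]
so $\mc{A}_{\lambda_t}(\alpha)$ is independent of $t$ as well.

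Now fix $L$. For every $t$ and every generic admissible $J_t$, the filtered chain complex $ECC^L(Y,\lambda_t,0,J_t)$ has the same underlying generating set — the $\Gamma=0$ generators of action less than $L$ — and only its differential varies with $t$ (through $J_t$). Since no generator ever crosses the action level $L$, the invariance arguments of \cite{cc2} (in their simplest form, no degenerate contact form being encountered along the way) produce, for $t$ close to $t'$, filtration-preserving chain homotopy equivalences between $ECC^L(Y,\lambda_t,0,J_t)$ and $ECC^L(Y,\lambda_{t'},0,J_{t'})$; composing finitely many of these along the path yields a canonical isomorphism $ECH^L(Y,\lambda,0)\xrightarrow{\simeq}ECH^L(Y,\lambda',0)$ which is the identity on generators, hence commutes with the inclusions $\imath_*$. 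Passing to the direct limit over $L$ gives $ECH(Y,\lambda,0)\simeq ECH(Y,\lambda',0)$. Compatibility with the $U$ maps is obtained in the same way, since $U$ is defined by an analogous count of ECH index $2$ curves through a chosen point.

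The step I expect to be the main obstacle is verifying that the \cite{cc2} invariance machinery really delivers \emph{filtered} isomorphisms here — isomorphisms $ECH^L\to ECH^L$ at the same action level, rather than the action-shifted comparison (domain rescaled by a factor $>1$) that any honest symplectic cobordism between $(Y,\lambda)$ and $(Y,\lambda')$ must produce. The resolution is exactly that along $\lambda_t$ the actions of all $\Gamma=0$ generators are frozen, so the continuation maps are genuine filtration-preserving quasi-isomorphisms at every level; but pinning this down requires care with the cobordism/continuation constructions of \cite{cc2}, and with the (mild) subtlety of keeping $J_t$ generic along the path. As an alternative that sidesteps \cite{cc2} almost entirely, one can note that for $\lambda_{t'}$ sufficiently $C^\infty$-close to $\lambda_t$ (both with the fixed differential) one may choose admissible $J_t,J_{t'}$ close enough that the ECH index $1$ and index $2$ moduli spaces — hence the differential and the $U$ map — are literally identified via the canonical bijection of generators, so that $ECC^L(Y,\lambda_t,0,J_t)=ECC^L(Y,\lambda_{t'},0,J_{t'})$ as filtered complexes, and then chain such local identifications along $\lambda_t$, again using that the action of every $\Gamma=0$ generator stays put.
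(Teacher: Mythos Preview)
Your key observations are exactly the ones the paper uses: since $d\lambda=d\lambda'$, the Reeb vector fields differ by a positive function, so the embedded Reeb orbits, their types, and (for $\Gamma=0$, via Stokes) their actions all coincide, and the ECH generating sets are canonically identified at every action level. Where you diverge from the paper is in how you match the differentials. You deform along $\lambda_t=(1-t)\lambda+t\lambda'$ and invoke continuation/invariance machinery from \cite{cc2} to produce filtered chain homotopy equivalences for nearby $t$, then chain these along the interval. The paper instead does everything in one stroke: writing $R'=fR$, the diffeomorphism $\phi:\R\times Y\to\R\times Y$, $(s,y)\mapsto (f(y)s,y)$, conjugates any admissible $J$ for $\lambda$ to an admissible $J'=\phi_*^{-1}J\phi_*$ for $\lambda'$, and tautologically carries $J$-holomorphic curves to $J'$-holomorphic curves. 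This gives a literal isomorphism of chain complexes $ECC(Y,\lambda,\Gamma,J)\simeq ECC(Y,\lambda',\Gamma,J')$ (and of the chain-level $U$ maps) under the canonical bijection of generators; the filtration is preserved when $\Gamma=0$ by the action computation you already made.

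The practical upshot is that the paper's argument sidesteps precisely the obstacle you flagged. You correctly worry that the cobordism maps of \cite{cc2} only yield action-shifted comparisons, and your workaround --- that frozen actions should upgrade these to honest filtered isomorphisms --- is plausible but not something \cite{cc2} states; your fallback of arguing that index $1$ and $2$ moduli spaces are literally unchanged for $C^\infty$-close $(\lambda_t,J_t)$ also needs a transversality/compactness argument you do not supply. The diffeomorphism $\phi$ eliminates all of this: no path of contact forms, no genericity along the way, no appeal to Seiberg--Witten continuation, and the $U$ map compatibility comes for free since $\phi$ transports the index $2$ curves through a point just as it does the index $1$ curves. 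Your route could likely be made to work, but it is substantially heavier than necessary.
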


(The part about $U$ maps is not needed here, but will be used in
\S\ref{sec:dbp}.)

\begin{proof}
Let $R$ and $R'$ denote the Reeb vector fields for $\lambda$ and
$\lambda'$ respectively.  Since $d\lambda=d\lambda'$, we have $R'=fR$
for some positive function $f:Y\to\R$.  In particular there is a
canonical bijection between the ECH generators of $\lambda$ and those
of $\lambda'$.

Now define a diffeomorphism
\[
\begin{split}
\phi:\R\times Y &\longrightarrow \R\times Y,\\
(s,y) &\longmapsto (f(y)s,y).
\end{split}
\]
If $J$ is an almost complex structure on $\R\times Y$ as needed to
define the ECH of $\lambda$, then $J'=\phi_*^{-1}\circ J \circ \phi_*$
is an almost complex structure as needed to define the ECH of
$\lambda'$.  The canonical bijection on ECH generators then gives an
isomorphism of chain complexes
\begin{equation}
\label{eqn:bin}
ECC(Y,\lambda,\Gamma,J) \simeq ECC(Y,\lambda',\Gamma,J'),
\end{equation}
because $\phi$ by definition induces a bijection on the relevant
holomorphic curves.  For the same reason, this isomorphism respects
the $U$ maps.

When $\Gamma=0$, the isomorphism \eqref{eqn:bin} further respects the
symplectic action filtrations, because if $\alpha=\{(\alpha_i,m_i)\}$
is an ECH generator with $[\alpha]=0$, then since $\lambda-\lambda'$
is a closed $1$-form on $Y$, by Stokes's theorem we have
$\sum_i m_i\int_{\alpha_i}\lambda=\sum_im_i\int_{\alpha_i}\lambda'$.
\end{proof}

\begin{remark}
\label{rem:c10}
We always have $\widetilde{c}_1(X,\omega)=0$, by
Remark~\ref{rem:spec}(b), because the ECH contact invariant
$[\emptyset] \in ECH(Y,\lambda,0)$ is nonzero by
Theorem~\ref{thm:cob}(b).
\end{remark}

We can now prove a symplectic embedding obstruction, which is a warmup
to Theorem~\ref{thm:main}:

\begin{proposition}
\label{prop:embeddingObstruction}
Let $(X_0,\omega_0)$ and $(X_1,\omega_1)$ be four-dimensional
Liouville domains.  Suppose there is a symplectic embedding
$\varphi:(X_0,\omega_0)\to (\op{int}(X_1),\omega_1)$ such that
$X_1\setminus\op{int}(\varphi(X_0))$ is diffeomorphic to a product
$[0,1]\times Y$.  Then $\widetilde{c}_k(X_0,\omega_0)\le
\widetilde{c}_k(X_1,\omega_1)$ for all positive integers $k$.
\end{proposition}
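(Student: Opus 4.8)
The plan is to reduce Proposition~\ref{prop:embeddingObstruction} to Proposition~\ref{prop:monotone2} by extracting from the symplectic embedding $\varphi$ a weakly exact symplectic cobordism, diffeomorphic to a product, from the boundary of $X_1$ to the boundary of $X_0$. The candidate cobordism is the region $W \eqdef X_1 \setminus \varphi(\op{int}(X_0))$, whose boundary is $\partial X_1 - \varphi(\partial X_0)$. By hypothesis $W$ is diffeomorphic to $[0,1]\times Y$, so the product condition needed in Proposition~\ref{prop:monotone2} is immediate; the real content is to check that $W$, with the symplectic form $\omega_1$ it inherits, together with the appropriate contact forms on its two boundary components, is a \emph{weakly exact} symplectic cobordism in the sense of Definition~\ref{def:wesc}.

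First I would fix contact forms: let $\lambda_+$ be a contact form on $Y_+ \eqdef \partial X_1$ with $d\lambda_+ = \omega_1|_{Y_+}$, and let $\lambda_0$ be a contact form on $\partial X_0$ with $d\lambda_0 = \omega_0|_{\partial X_0}$; then on $Y_- \eqdef \varphi(\partial X_0)$ set $\lambda_- \eqdef (\varphi^{-1})^*\lambda_0$, which satisfies $d\lambda_- = (\varphi^{-1})^*\omega_0|_{\partial X_0} = \omega_1|_{Y_-}$ since $\varphi$ is a symplectic embedding. So both boundary conditions $\omega_1|_{Y_\pm} = d\lambda_\pm$ of Definition~\ref{def:wesc} hold. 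For exactness of $\omega_1$ on $W$: $\omega_1$ is exact on all of $X_1$, say $\omega_1 = d\lambda_1$, and restricting $\lambda_1$ to $W$ shows $\omega_1|_W$ is exact. Hence $(W,\omega_1)$ is a weakly exact symplectic cobordism from $(Y_+,\lambda_+)$ to $(Y_-,\lambda_-)$. Applying Proposition~\ref{prop:monotone2} gives $\widetilde{c}_k(Y_-,\lambda_-) \le \widetilde{c}_k(Y_+,\lambda_+)$ for all $k$. Finally, $\widetilde{c}_k(Y_+,\lambda_+) = \widetilde{c}_k(X_1,\omega_1)$ by Definition~\ref{def:capacity}, and $\widetilde{c}_k(Y_-,\lambda_-) = \widetilde{c}_k(\partial X_0, \lambda_0) = \widetilde{c}_k(X_0,\omega_0)$ because $\varphi$ restricts to a contactomorphism $(\partial X_0,\lambda_0) \to (Y_-,\lambda_-)$ and the full ECH spectrum is a contact invariant (it is defined from $ECH^L(Y,\lambda,0)$, which is diffeomorphism-invariant in the obvious way). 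Chaining these gives $\widetilde{c}_k(X_0,\omega_0) \le \widetilde{c}_k(X_1,\omega_1)$.

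There is one subtlety I would want to address carefully, and I expect it to be the main obstacle: Definition~\ref{def:wesc} requires $\partial X = Y_+ - Y_-$ with the boundary orientations matching up correctly, and one must make sure that the orientation conventions work out so that $W$ is genuinely a cobordism \emph{from} $Y_+$ \emph{to} $Y_-$ (i.e., that $\varphi(\partial X_0)$ appears as the negative boundary), rather than the reverse. Relatedly, one should check that the contact forms $\lambda_\pm$ as chosen induce the correct coorientations/orientations on $Y_\pm$ compatible with being boundary components of the symplectic cobordism — this is where the condition that $\varphi$ maps into the \emph{interior} of $X_1$ is used, guaranteeing $W$ is a genuine (nonempty-interior, compact) cobordism with smooth boundary. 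Once the orientation bookkeeping is settled, the rest is a routine application of Definition~\ref{def:capacity}, Proposition~\ref{prop:monotone2}, and the diffeomorphism-invariance of filtered ECH; no holomorphic curve or Seiberg--Witten analysis is needed here, since all of that has been packaged into Theorem~\ref{thm:cob} and hence into Proposition~\ref{prop:monotone2}.
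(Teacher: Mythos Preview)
Your proposal is correct and follows exactly the same route as the paper: identify $X_1\setminus\op{int}(\varphi(X_0))$ with its induced symplectic form and the obvious contact forms on the two boundary components as a weakly exact symplectic cobordism (diffeomorphic to a product by hypothesis), then invoke Proposition~\ref{prop:monotone2}. The paper's proof is a two-line version of yours that suppresses the routine verifications and orientation bookkeeping you spell out.
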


\begin{proof}
  For $i=0,1$, write $Y_i=\partial X_i$, and let $\lambda_i$ be a
  contact form on $Y_i$ with $d\lambda_i={\omega_i}|_{Y_i}$.  Then
  $(X_1\setminus\op{int}(\varphi(X_0)),\omega_1)$ is a weakly exact
  symplectic cobordism from $(Y_1,\lambda_1)$ to $(Y_0,\lambda_0)$.
  Now apply Proposition~\ref{prop:monotone2}.
\end{proof}

\subsection{The full ECH capacities of an ellipsoid}
\label{sec:fece}

Recall the notation from Proposition~\ref{prop:ellipsoid}.

\begin{proposition}
\label{prop:ellipsoidCapacity}
The full ECH capacities of an ellipsoid are given by
\[
\widetilde{c}_k(E(a,b)) = (a,b)_k.
\]
\end{proposition}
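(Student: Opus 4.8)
The plan is to compute the full ECH spectrum of the boundary of the ellipsoid directly, by choosing a suitable contact form for which the Reeb dynamics is completely explicit. Since the boundary $\partial E(a,b)$ with the Liouville form $\lambda$ from \eqref{eqn:Liouville} is degenerate when $a/b$ is rational, the first step is to perturb: for $a/b$ irrational the Reeb vector field on $\partial E(a,b)$ has exactly two simple closed orbits $e_1$ (in the $z_2=0$ circle) and $e_2$ (in the $z_1=0$ circle), both elliptic, of symplectic actions $a$ and $b$ respectively. In general one perturbs $\lambda$ to a nondegenerate contact form $\lambda_\epsilon$ with the same two short orbits (plus long orbits that only matter above some large action threshold) and uses Definition~\ref{def:supinf} together with the scaling property \eqref{eqn:capacityScaling} to pass to the limit; the $a/b$ irrational case determines everything by continuity since $(a,b)_k$ is continuous in $(a,b)$.

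The heart of the matter is the computation of $ECH^L(\partial E(a,b),\lambda_\epsilon,0)$ for $a/b$ irrational. Here I would invoke the known structure of ECH for an irrational ellipsoid: the ECH generators of action below the relevant threshold are exactly the monomials $e_1^{m}e_2^{n}$ with $m,n\in\N$ (since $e_1,e_2$ are elliptic, arbitrary multiplicities are allowed), of symplectic action $am+bn$; and the ECH index/grading is such that these generators are all cycles, no two are homologous, and the differential vanishes in this range — equivalently, $ECH(\partial E(a,b),\lambda,0)$ has one generator in each even nonnegative grading, with the grading of $e_1^me_2^n$ equal to (twice) the number of lattice points $(m',n')\in\N^2$ with $am'+bn'<am+bn$, i.e. the element $e_1^me_2^n$ sits in the position corresponding to the rank just below its action. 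Granting this, the filtered ECH $ECH^L$ is spanned by the classes $[e_1^me_2^n]$ with $am+bn<L$, and its image in $ECH$ has dimension equal to $\#\{(m,n)\in\N^2 : am+bn<L\}$. Therefore $\widetilde{c}_k(\partial E(a,b),\lambda)$ — the infimum of $L$ for which this count is at least $k$ — is precisely the $k$-th smallest value of $am+bn$ over $(m,n)\in\N^2$, which is $(a,b)_k$ in the notation of Proposition~\ref{prop:ellipsoid}.

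The main obstacle is establishing the ECH computation for the irrational ellipsoid cleanly: one must verify that the perturbation $\lambda_\epsilon$ can be chosen so that (i) below any given action bound $L$ the only generators are the monomials $e_1^me_2^n$, (ii) the ECH differential vanishes on these (which follows from the grading computation, since $\partial$ drops grading by one and all these generators live in even grading, so there is nothing for them to map to), and (iii) the filtered groups and the inclusion maps $\imath_*$ are correctly identified, independent of $\epsilon$ and of $J$ (which is exactly what \cite{cc2} provides). Once the grading count of lattice points is in hand, the identification of the infimum with $(a,b)_k$ is a routine bookkeeping step — one just checks that the image of $ECH^L$ jumps in dimension exactly as $L$ crosses each value $am+bn$, counted with the multiplicity with which that value is attained. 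I would also remark that this recovers Proposition~\ref{prop:ellipsoid} once the distinguished capacities are shown to coincide with the full capacities for an ellipsoid, which is the content of a later section.
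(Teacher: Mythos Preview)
Your proposal is correct and follows essentially the same route as the paper: compute directly for $a/b$ irrational using that the two embedded Reeb orbits are elliptic (so all ECH generators have even grading and $\partial=0$), read off $\dim\operatorname{Im}(ECH^L\to ECH)=\#\{(m,n)\in\N^2:am+bn<L\}$, and then pass to the rational case by a limiting argument.

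Two minor points where the paper is cleaner than what you wrote. First, for $a/b$ irrational the restriction of the Liouville form \eqref{eqn:Liouville} to $\partial E(a,b)$ is already nondegenerate, so no perturbation $\lambda_\epsilon$ is needed there; your references to ``the perturbation $\lambda_\epsilon$'' in the irrational case are superfluous and slightly obscure the argument. Second, for $a/b$ rational the paper does not perturb the contact form on $\partial E(a,b)$ at all (which, as you note, would introduce extra orbits from the broken Morse--Bott families and require an action-threshold discussion). Instead it sandwiches $E(a,b)$ between nearby irrational ellipsoids $E(a_-,b_-)\subset E(a,b)\subset E(a_+,b_+)$ and applies the monotonicity statement for product cobordisms (Proposition~\ref{prop:embeddingObstruction}) to get $(a_-,b_-)_k\le\widetilde{c}_k(E(a,b))\le(a_+,b_+)_k$, then lets $a_\pm\to a$, $b_\pm\to b$. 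This is exactly the ``continuity'' you invoke, but packaged so that no analysis of the degenerate or perturbed Reeb dynamics is required.
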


\begin{proof}
  For the contact form on $\partial E(a,b)$ obtained by restricting
  \eqref{eqn:Liouville}, the Reeb vector field is given by
\[
R = 2\pi\left(a^{-1}\frac{\partial}{\partial\theta_1} +
b^{-1}\frac{\partial}{\partial \theta_2}\right),
\]
where $\partial/\partial\theta_j \eqdef x_j\partial/\partial y_j -
y_j\partial/\partial x_j$.

Suppose that the ratio $a/b$ is irrational.  In this case there are
just two embedded Reeb orbits $\gamma_1=(z_2=0)$ and
$\gamma_2=(z_1=0)$.  These are elliptic and nondegenerate and have
action $a$ and $b$ respectively.  In particular $\lambda|_{\partial
  E(a,b)}$ is nondegenerate, and the ECH generators have the form
$\gamma_1^m\gamma_2^n$ where $m,n\in\N$.  Of course these all
correspond to $\Gamma=0$ since $H_1(\partial E(a,b))=0$.  The action
of such a generator is given by
\[
\mc{A}(\gamma_1^m\gamma_2^n) = am+bn.
\]
Since all Reeb orbits are elliptic, all ECH generators have even
grading (see \cite[Prop.\ 1.6(c)]{pfh2}), so the differential on the
ECH chain complex vanishes for any $J$.  (The full calculation of the
grading on the ECH chain complex in this example is given in
\cite[Ex.\ 4.2]{wh}, but we do not need this here.)  Thus the
dimension of the image of $ECH^L(\partial E(a,b),\lambda,0)$ in
$ECH(\partial E(a,b),\lambda,0)$ is
\[
\left|\left\{(m,n)\in \N^2\,\big|\, ma+nb<L\right\}\right|.
\]
The proposition in this case follows immediately.

To prove the proposition when $a/b$ is rational, choose real numbers
$a_-<a<a_+$ and $b_-<b<b_+$ with $a_-/b_-$ and $a_+/b_+$ irrational.  By
Proposition~\ref{prop:embeddingObstruction} we have
\[
(a_-,b_-)_k =
\widetilde{c}_k(E(a_-,b_-)) \le \widetilde{c}_k(E(a,b)) \le
\widetilde{c}_k(E(a_+,b_+)) = (a_+,b_+)_k.
\]
For any given $k$, taking a limit as $a_\pm\to a$ and $b_\pm\to b$
proves that $\widetilde{c}_k(E(a,b))=(a,b)_k$ as claimed.
\end{proof}

If $E(a,b)$ symplectically embeds into the interior of $E(c,d)$, then
Propositions~\ref{prop:embeddingObstruction} and
\ref{prop:ellipsoidCapacity} tell us that
\begin{equation}
\label{eqn:abcd}
(a,b)_k \le (c,d)_k
\end{equation}
for all $k$.  To understand this condition in examples, the following
alternate description of $(a,b)_k$ is useful.  Given $(m,n)\in\N^2$,
let $T_{a/b}(m,n)$ denote the triangle in $\R^2$ whose edges are the
coordinate axes together with the line through $(m,n)$ of slope
$-a/b$.  Then
\[
(a,b)_k = am + bn
\]
where
\[
k = \left|T_{a/b}(m,n)\cap\N^2\right|.
\]

For example, we have $(a,b)_1=0$, as we already knew from
Remark~\ref{rem:c10}.  Next, we have
\[
(a,b)_2 = \left\{\begin{array}{cl} a, & a/b\le 1,\\
b, & a/b \ge 1.
\end{array}\right.
\]
Thus the condition \eqref{eqn:abcd} for $k=2$ recovers the well-known
fact that if $E(a,b)$ symplectically embeds into $E(c,d)$ then
$\min(a,b)\le\min(c,d)$.  Next, assuming $a\ge b$, we have
\begin{equation}
\label{eqn:c3}
(a,b)_3 = \left\{\begin{array}{cl}
2b, & 2\le a/b,\\
a, & 1\le a/b \le 2.
\end{array}
\right.
\end{equation}
Another example is
\begin{equation}
\label{eqn:c6}
(a,b)_6 = \left\{\begin{array}{cl} 
5b, & 5\le a/b,\\
a, & 4\le a/b\le 5,\\
4b, & 3\le a/b\le 4,\\
a+b, & 2\le a/b\le 3,\\
3b, & 3/2\le a/b\le 2,\\
2a, & 1\le a/b \le 3/2.
\end{array}
\right.
\end{equation}

For example, return to the function $f$ defined in \S\ref{sec:eb} that
measures the obstruction to symplectically embedding an ellipsoid into
a ball.  It is computed in \cite{m} that $f(2)=2$ and $f(5)=5/2$.  On
the other hand, equation \eqref{eqn:c3} implies that
$(2,1)_3/(1,1)_3=2$, and equation \eqref{eqn:c6} implies that
$(5,1)_6/(1,1)_6=5/2$.  This is how one confirms that the bound
\eqref{eqn:eb} (which we have already justified) is sharp for $a=2,5$.

\begin{remark}
\label{rem:ellipsoidVolume}
If we write $L=am+bn$, then the triangle $T_{a/b}(m,n)$ has area
$L^2/2ab$, so when $L$ is large,
\[
\left|T_{a/b}(m,n)\cap\N^2\right| = \frac{L^2}{2ab} + O(L).
\]
Note also that $E(a,b)$ has volume $ab/2$.  It follows that
\begin{equation}
\label{eqn:ellipsoidVolume}
\lim_{k\to\infty}\frac{\widetilde{c}_k(E(a,b))^2}{k} = 4\op{vol}(E(a,b)).
\end{equation}
In particular, the condition \eqref{eqn:abcd} for $k$ large simply
tells us that the volume of $E(a,b)$ is less than or equal to that of
$E(c,d)$.  (But the equality in \eqref{eqn:ellipsoidVolume} only holds
in the limit, so that for given $(a,b)$ and $(c,d)$, taking suitable
small values of $k$ often gives stronger conditions.)
\end{remark}

\section{Distinguished ECH spectrum and capacities}
\label{sec:distinguished}

We now define modified versions of the full ECH spectrum and full ECH capacities
which give obstructions to symplectic embeddings for non-product cobordisms.

\subsection{Definitions and basic properties}
\label{sec:dbp}

\begin{definition}
\label{def:csigma}
  If $\lambda$ is a nondegenerate contact form on a closed oriented
  three-manifold $Y$, and if $0\neq\sigma\in ECH(Y,\lambda,\Gamma)$, define
  $c_\sigma(Y,\lambda)$ to be the infimum over $L\in\R$ such that
  $\sigma$ is contained in the image of the map $ECH^L(Y,\lambda,\Gamma)\to
  ECH(Y,\lambda,\Gamma)$.  As in \S\ref{sec:ECHspectrum}, if $\lambda$ is
  degenerate, define
\[
c_\sigma(Y,\lambda) \eqdef \sup \{c_\sigma(Y,f\lambda)\},
\]
where the supremum is over functions $f:Y\to(0,1]$ such that
$f\lambda$ is nondegenerate.  Note that this definition makes sense
because $ECH(Y,f\lambda,\Gamma)$ does not depend on $f$.  (The
cobordism maps \eqref{eqn:cc2} for product cobordisms define a
canonical isomorphism $ECH(Y,f\lambda,\Gamma)=ECH(Y,f'\lambda,\Gamma)$
whenever $f\lambda$ and $f'\lambda$ are nondegenerate.)
\end{definition}

It follows from Lemma~\ref{lem:amb} below that if $\sigma\in
ECH(Y,\lambda,0)$, then $c_\sigma(Y,\lambda)$ is one of the numbers in
the full ECH spectrum $\{\widetilde{c}_k(Y,\lambda)\}$.

\begin{lemma}
\label{lem:ctilde}
Let $(X,\omega)$ be a weakly exact symplectic cobordism from
$(Y_+,\lambda_+)$ to $(Y_-,\lambda_-)$, where $\lambda_\pm$ are
nondegenerate. Let $\sigma\in ECH(Y_+,\lambda_+,0)$.  Then
\[
c_{\sigma}(Y_+,\lambda_+) \ge c_{\Phi(X,\omega)(\sigma)}(Y_-,\lambda_-).
\]
\end{lemma}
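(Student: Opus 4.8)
The plan is to deduce this directly from Theorem~\ref{thm:cob}, in particular from the compatibility of the filtered cobordism maps $\Phi^L(X,\omega)$ with the inclusion-induced maps $\imath_*$. Since $\lambda_\pm$ are assumed nondegenerate, both $c_\sigma(Y_+,\lambda_+)$ and $c_{\Phi(X,\omega)(\sigma)}(Y_-,\lambda_-)$ are computed by the nondegenerate clause of Definition~\ref{def:csigma}. (If $\Phi(X,\omega)(\sigma)=0$, then under the convention $c_0\eqdef 0$ there is nothing to prove, since $c_\sigma\ge 0$; so we may assume $\Phi(X,\omega)(\sigma)\neq 0$.) First I would observe that for any real number $L$, the condition ``$\sigma$ lies in the image of $\imath_*\colon ECH^L(Y_+,\lambda_+,0)\to ECH(Y_+,\lambda_+,0)$'' cuts out an upward-closed set of $L$'s, because $\imath_*$ factors as $ECH^L\to ECH^{L'}\to ECH$ for $L\le L'$. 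Hence it suffices to fix an arbitrary $L>c_\sigma(Y_+,\lambda_+)$, show that $c_{\Phi(X,\omega)(\sigma)}(Y_-,\lambda_-)\le L$, and then take the infimum over such $L$.

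So fix $L>c_\sigma(Y_+,\lambda_+)$; by the previous paragraph there is $\widetilde\sigma\in ECH^L(Y_+,\lambda_+,0)$ with $\imath_*(\widetilde\sigma)=\sigma$. Passing to the direct limit over $L''\ge L$ on the target side in the commuting squares of Theorem~\ref{thm:cob}(a), and using $ECH=\lim_{\to}ECH^{L''}$ together with $\Phi(X,\omega)=\lim_{\to}\Phi^{L''}(X,\omega)$ from \eqref{eqn:Phi}, one obtains a commuting square
\[
\begin{CD}
ECH^L(Y_+,\lambda_+,0) @>{\Phi^L(X,\omega)}>> ECH^L(Y_-,\lambda_-,0)\\
@V{\imath_*}VV @VV{\imath_*}V\\
ECH(Y_+,\lambda_+,0) @>{\Phi(X,\omega)}>> ECH(Y_-,\lambda_-,0).
\end{CD}
\]
Chasing $\widetilde\sigma$ around this square gives $\Phi(X,\omega)(\sigma)=\imath_*\bigl(\Phi^L(X,\omega)(\widetilde\sigma)\bigr)$, so $\Phi(X,\omega)(\sigma)$ lies in the image of $\imath_*\colon ECH^L(Y_-,\lambda_-,0)\to ECH(Y_-,\lambda_-,0)$, and therefore $c_{\Phi(X,\omega)(\sigma)}(Y_-,\lambda_-)\le L$, as wanted.

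I do not expect a real obstacle: this lemma is a formal consequence of the existence and naturality of the filtered cobordism maps, all of which are already packaged into Theorem~\ref{thm:cob}. The only mild care needed is the bookkeeping described above — that the set of admissible action thresholds is upward closed, and the trivial convention in case $\Phi(X,\omega)(\sigma)=0$ — together with the remark that weak exactness of $(X,\omega)$ (rather than exactness) is exactly the level of generality Theorem~\ref{thm:cob} was set up to handle, so no additional hypotheses on the connectivity of $X$, $Y_+$, or $Y_-$ are required.
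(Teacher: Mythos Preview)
Your proof is correct and follows essentially the same approach as the paper: both arguments chase $\sigma$ through the commutative square relating $\Phi^L(X,\omega)$ and $\Phi(X,\omega)$ (the paper refers to this as diagram~\eqref{eqn:bd}, which already appeared in the proof of Lemma~\ref{lem:monotone1}). Your version simply spells out in more detail how the square arises from Theorem~\ref{thm:cob}(a) via the direct limit, and handles the edge case $\Phi(X,\omega)(\sigma)=0$ explicitly, whereas the paper's proof is a terse two-sentence diagram chase.
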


\begin{proof}
Let $L\in\R$.  Suppose $\sigma$ is
  in the image of the map $ECH^L(Y_+,\lambda_+,0)\to
  ECH(Y_+,\lambda_+,0)$.  Then it follows from the diagram
  \eqref{eqn:bd} that $\Phi(X,\omega)(\sigma)$ is in the image of the
  map $ECH^L(Y_-,\lambda_-,0)\to ECH(Y_-,\lambda_-,0)$.
\end{proof}

\begin{definition}
If $(Y,\lambda)$ is a closed connected contact three-manifold with
$c(\xi)\neq 0$, and if $k$ is a nonnegative integer, define
\begin{equation}
\label{eqn:desc}
c_k(Y,\lambda) \eqdef \min\left\{c_\sigma(Y,\lambda) \,\big|\, \sigma\in
ECH(Y,\lambda,0), \;
U^k\sigma=[\emptyset]\right\}.
\end{equation}
More generally, if $(Y,\lambda)$ is a closed contact three-manifold
with connected components $Y_1,\ldots,Y_n$, and if $c(\xi)\neq 0$,
define
\[
\begin{split}
c_k(Y,\lambda) \eqdef \min\big\{c_\sigma(Y,\sigma) \;\big|\; &
  \sigma\in ECH(Y,\lambda,0),\\
&  U_{i_1}\cdots U_{i_k}\sigma=[\emptyset] \;\;\; \forall
i_1,\ldots,i_k\in\{1,\ldots,n\}\big\}.
\end{split}
\]
The sequence $\{c_k(Y,\lambda)\}_{k=0,1,\ldots}$ is called the {\em
  (distinguished) ECH spectrum\/} of $(Y,\lambda)$.
\end{definition}

\begin{remark}
\label{rem:torsion}
\begin{description}
\item{(a)}
Any choice of chain map used to define the $U$ map decreases the
symplectic action, for the same reason that the differential does, see
\S\ref{sec:filtered}.  It follows that
\[
0 = c_0(Y,\lambda) < c_1(Y,\lambda) \le c_2(Y,\lambda) \le \cdots \le \infty.
\]
Here $c_k(Y,\lambda)=c_{k+1}(Y,\lambda)<\infty$ is possible when
$\lambda$ is degenerate.
\item{(b)} We have $c_k(Y,\lambda)<\infty$ for all $k$ only if
  $c_1(\xi)\in H^2(Y;\Z)$ is torsion.  Proof: Without loss of
  generality $Y$ is connected.  Recall from Remark~\ref{rem:spec} that
  if $c_1(\xi)$ is not torsion then $ECH(Y,\lambda,0)$ is finitely
  generated.  But if $\sigma\in ECH(Y,\lambda,0)$ and
  $U^k\sigma=[\emptyset]$ then $\dim(ECH(Y,\lambda,0))>k$, because it
  follows from $U[\emptyset]=0$ that the classes
  $\sigma,U\sigma,\ldots,U^k\sigma$ are linearly independent.
\end{description}
\end{remark}

In simple examples the distinguished ECH spectrum is related to the
full ECH spectrum defined previously as follows.  Recall from
\cite{eliashberg} that there is a unique tight contact structure on
$S^3$, which is the one induced by a Liouville domain with boundary
diffeomorphic to $S^3$.

\begin{proposition}
\label{prop:dss3}
If $Y$ is diffeomorphic to $S^3$ and if $\Ker(\lambda)$ is
  the tight contact structure on $Y$, then
\[
c_k(Y,\lambda) = \widetilde{c}_{k+1}(Y,\lambda).
\]
\end{proposition}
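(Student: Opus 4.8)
The plan is to reduce to the nondegenerate case and then exploit the extremely simple $\Z/2[U]$-module structure of the embedded contact homology of $(S^3,\xi_{std})$. First I would recall this structure. Since $H^2(S^3;\Z)=0$ we have $c_1(\xi)=0$, so by Remark~\ref{rem:spec}(a) the group $ECH(Y,\lambda,0)$ is infinitely generated. Combining the ellipsoid computation used in the proof of Proposition~\ref{prop:ellipsoidCapacity} (where the differential vanishes and all generators lie in even, nonnegative gradings) with Taubes's isomorphism \eqref{eqn:echswf} and the Kronheimer--Mrowka computation of $\widehat{HM}(S^3)$ in \cite{km}, there is a basis $\zeta_0=[\emptyset],\zeta_1,\zeta_2,\ldots$ of $ECH(Y,\lambda,0)$, with $\zeta_j$ in grading $2j$, such that the $U$ map satisfies $U\zeta_j=\zeta_{j-1}$ for $j\ge 1$ and $U\zeta_0=0$. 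Write $M_d\eqdef\Span(\zeta_0,\ldots,\zeta_d)$ for $d\ge 0$ and $M_{-1}\eqdef 0$; note $\Ker(U^k)=M_{k-1}$ and $U^k\sigma=[\emptyset]$ if and only if $\sigma=\zeta_k+\tau$ with $\tau\in M_{k-1}$.

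Next I would record the elementary fact that \emph{every} finite-dimensional $\Z/2[U]$-submodule $N$ of $ECH(Y,\lambda,0)$ equals some $M_d$: if $N\neq 0$, let $d$ be the largest index such that $\zeta_d$ occurs in an element of $N$, pick $\sigma\in N$ with nonzero $\zeta_d$-coefficient, and observe that $\{U^{d-j}\sigma\}_{j=0}^{d}$ is upper-triangular with respect to the ordered basis $(\zeta_0,\ldots,\zeta_d)$, hence spans $M_d$; together with $N\subseteq M_d$ this gives $N=M_d$. Now assume $\lambda$ is nondegenerate. For each $L$ the image $I^L\eqdef\op{im}\bigl(ECH^L(Y,\lambda,0)\to ECH(Y,\lambda,0)\bigr)$ is finite-dimensional (finitely many Reeb orbits below any action bound), and since a chain map defining $U$ decreases the action (Remark~\ref{rem:torsion}(a)) and commutes with the inclusion-induced map, $I^L$ is a $\Z/2[U]$-submodule; hence $I^L=M_{d(L)}$ with $d(L)=\dim I^L-1$. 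From this, $\dim I^L\ge k+1 \iff \zeta_k\in I^L$, so by Definition~\ref{def:fullspectrum} and Definition~\ref{def:csigma} we get $\widetilde{c}_{k+1}(Y,\lambda)=\inf\{L\mid \zeta_k\in I^L\}=c_{\zeta_k}(Y,\lambda)$. Finally, for $\tau\in M_{k-1}$, if $\zeta_k+\tau\in I^L=M_{d(L)}$ then $d(L)\ge k$ (otherwise $M_{d(L)}\subseteq M_{k-1}$ would force $\zeta_k=(\zeta_k+\tau)-\tau\in M_{k-1}$, impossible), so $\zeta_k\in I^L$; hence $c_{\zeta_k}(Y,\lambda)\le c_{\zeta_k+\tau}(Y,\lambda)$. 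Taking $\tau=0$ for the reverse inequality, we conclude $c_k(Y,\lambda)=c_{\zeta_k}(Y,\lambda)=\widetilde{c}_{k+1}(Y,\lambda)$ in the nondegenerate case.

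For the degenerate case I would argue without any min-max interchange. Choose $f\colon Y\to(0,1]$ with $f\lambda$ nondegenerate; the class $\zeta_k$ is defined independently of $f$ via the canonical isomorphisms in Definition~\ref{def:csigma}, and $U^k\zeta_k=[\emptyset]$. On one hand,
\[
c_k(Y,\lambda)\le c_{\zeta_k}(Y,\lambda)=\sup_f c_{\zeta_k}(Y,f\lambda)=\sup_f\widetilde{c}_{k+1}(Y,f\lambda)=\widetilde{c}_{k+1}(Y,\lambda),
\]
using the nondegenerate case and Definition~\ref{def:supinf}. On the other hand, for every $\sigma$ with $U^k\sigma=[\emptyset]$ and every such $f$, the nondegenerate case gives $c_\sigma(Y,f\lambda)\ge\widetilde{c}_{k+1}(Y,f\lambda)$, so $c_\sigma(Y,\lambda)=\sup_f c_\sigma(Y,f\lambda)\ge\widetilde{c}_{k+1}(Y,\lambda)$; taking the minimum over $\sigma$ yields $c_k(Y,\lambda)\ge\widetilde{c}_{k+1}(Y,\lambda)$.

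The only real content is the first step: pinning down the $\Z/2[U]$-module structure of $ECH(S^3,\xi_{std},0)$, and in particular that the $U$ map acts as the claimed shift $\zeta_j\mapsto\zeta_{j-1}$. Everything after that is formal linear algebra. One could instead try to extract the $U$-action directly from the irrational-ellipsoid model (cf.\ \cite{wh}), but it seems cleanest to invoke Taubes's isomorphism with $\widehat{HM}(S^3)$ together with the known behaviour of the Seiberg--Witten $U$ map there.
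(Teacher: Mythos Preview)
Your argument is correct. The overall strategy matches the paper's: establish that $ECH_*(S^3,\lambda,0)$ is one copy of $\Z/2$ in each even nonnegative grading, with $U$ an isomorphism in positive gradings and $[\emptyset]$ the grading-zero generator, and then deduce the result from elementary linear algebra. The paper also begins by reducing to nondegenerate $\lambda$ via approximation.

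Where you diverge is in the linear-algebraic step. The paper observes that the generators $\sigma_k$ have distinct gradings and strictly increasing $c_{\sigma_k}$ (since $U$ strictly decreases action), invokes Remark~\ref{rem:careful} to conclude that $\{\sigma_k\}$ is an action-minimizing basis with $c_{\sigma_k}=\widetilde{c}_{k+1}$, and then applies Lemma~\ref{lem:amb}(b) to the classes $\sigma$ with $U^k\sigma=[\emptyset]$. You instead exploit that the image $I^L$ is a finite-dimensional $\Z/2[U]$-submodule (because the chain-level $U$ decreases action) and that every such submodule of this particular module equals some $M_d$; from $I^L=M_{d(L)}$ both identities $\widetilde{c}_{k+1}=c_{\zeta_k}$ and $c_k=c_{\zeta_k}$ fall out directly. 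Your route bypasses Lemma~\ref{lem:amb} and Remark~\ref{rem:careful} entirely and is arguably more intrinsic, since it uses precisely the $U$-module structure that defines $c_k$ rather than the grading. The paper's route, on the other hand, illustrates how the general machinery of action-minimizing bases applies in the simplest case. Your explicit treatment of the degenerate case, avoiding any $\min$--$\sup$ interchange, is a nice bonus; the paper simply says ``by the usual approximation arguments.''
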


We also have:

\begin{proposition}
\label{prop:dsdu}
If $(Y_i,\lambda_i)$ are closed contact 3-manifolds with nonvanishing
ECH contact invariant for $i=1,\ldots,n$, then
\begin{equation}
\label{eqn:dsdu}
c_k\left(\coprod_{i=1}^n(Y_i,\lambda_i)\right) = \max
\left\{\sum_{i=1}^nc_{k_i}(Y_i,\lambda_i) \;\bigg|\;
  \sum_{i=1}^n k_i=k\right\}.
\end{equation}
\end{proposition}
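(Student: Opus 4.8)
The plan is to use the tensor product decomposition of filtered ECH to relate the distinguished ECH spectrum of a disjoint union to those of the pieces. First I would set $(Y,\lambda)=\coprod_{i=1}^n(Y_i,\lambda_i)$ and recall from \eqref{eqn:tensorchain} that the filtered chain complex $ECC^L(Y,\lambda,0,J)$ for a split $J$ is the ``action-filtered'' tensor product of the $ECC(Y_i,\lambda_i,0,J_i)$: a union of generators $\alpha=\prod_i\alpha_i$ lies in filtration level $L$ precisely when $\sum_i\mc{A}(\alpha_i)<L$. Passing to homology over the field $\Z/2$, this means that for a class $\sigma=\sigma_1\otimes\cdots\otimes\sigma_n\in ECH(Y,\lambda,0)$ which is a pure tensor of nonzero classes, one has $c_\sigma(Y,\lambda)=\sum_i c_{\sigma_i}(Y_i,\lambda_i)$; and for a general $\sigma=\sum_j\sigma_1^j\otimes\cdots\otimes\sigma_n^j$, writing each $\sigma_i^j$ in a basis of action-homogeneous classes (i.e.\ a basis in which each basis element $e$ has a well-defined $c_e$), one gets $c_\sigma(Y,\lambda)=\max$ of $\sum_i c_{e_i}$ over the basis tensors $e_1\otimes\cdots\otimes e_n$ appearing with nonzero coefficient in $\sigma$. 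This last fact is the technical heart: it follows because the filtration on the tensor complex is the tensor of the filtrations, so the subspace of $ECH(Y,\lambda,0)$ generated by classes with $c\le L$ is spanned by tensors $e_1\otimes\cdots\otimes e_n$ with $\sum_i c_{e_i}\le L$.

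Next I would handle the $U$-map condition. Under \eqref{eqn:tensor}, $U_i$ acts as $U$ on the $i$-th factor and the identity elsewhere, and the $U_i$ commute. The condition defining $c_k(Y,\lambda)$ is that $U_{i_1}\cdots U_{i_k}\sigma=[\emptyset]=[\emptyset]_1\otimes\cdots\otimes[\emptyset]_n$ for every multi-index $(i_1,\ldots,i_k)$; since only the multiplicities matter, this is equivalent to: $U_1^{k_1}\cdots U_n^{k_n}\sigma=[\emptyset]$ for all $(k_1,\ldots,k_n)$ with $\sum_i k_i=k$. I would argue that, since each piece has $c(\xi_i)\neq 0$, the relevant $\sigma$ can be taken to be a pure tensor: given any $\sigma$ satisfying the $U$-conditions, expand it in action-homogeneous basis tensors $e_1\otimes\cdots\otimes e_n$; applying $U_1^{k_1}\cdots U_n^{k_n}$ and demanding the result equal $[\emptyset]_1\otimes\cdots\otimes[\emptyset]_n$ for all such splittings forces, for each basis tensor actually appearing, that $U_i^{k_i}(e_i)=[\emptyset]_i$ for every $k_i$ in some splitting with $\sum k_i=k$ — and since the cost of $\sigma$ is the max over appearing basis tensors of $\sum_i c_{e_i}$, the minimum is achieved by a single pure tensor $\sigma_1\otimes\cdots\otimes\sigma_n$ with $U_i^{k_i}\sigma_i=[\emptyset]_i$. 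Thus
\[
c_k(Y,\lambda)=\min\left\{\sum_{i=1}^n c_{\sigma_i}(Y_i,\lambda_i)\,\Big|\,\sum_i k_i=k,\ U_i^{k_i}\sigma_i=[\emptyset]_i\ \forall i\right\}.
\]
For each fixed splitting $(k_1,\ldots,k_n)$ the inner minimization over the $\sigma_i$ separates and gives $\sum_i c_{k_i}(Y_i,\lambda_i)$, but now I must explain the switch from $\min$ over splittings to $\max$: this is exactly the content of Proposition~\ref{prop:du}'s combinatorics, and it comes from the fact that the $U$-condition is a ``for all splittings'' condition — the real requirement is that $\sigma$ works against the worst splitting, so the best $\sigma$ costs $\max_{\sum k_i=k}\sum_i c_{k_i}$, reproducing \eqref{eqn:dsdu}.

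I would then reduce the degenerate case to the nondegenerate one exactly as in \S\ref{sec:ECHspectrum}: approximate $\lambda_i$ from above and below by nondegenerate $f_i^{\pm}\lambda_i$, use that all quantities in \eqref{eqn:dsdu} are monotone and continuous under such scalings (via the scaling isomorphism \eqref{eqn:scaling} and Lemma~\ref{lem:ctilde} applied to product cobordisms), and take limits. I expect the main obstacle to be the careful bookkeeping in the nondegenerate case: precisely justifying that the filtration on $ECH(Y,\lambda,0)$ is the tensor product of the filtrations on the factors — i.e.\ that $c_\sigma$ of a general (not pure-tensor) class equals the max over its action-homogeneous components — and then pushing this through the ``for all splittings'' $U$-map condition to extract a pure tensor realizing the minimum. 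This requires choosing bases of each $ECH(Y_i,\lambda_i,0)$ adapted to the action filtration (which exist since the filtration is by subspaces and everything is over a field), and checking that $U_i$ can be taken to respect such a basis in a suitable triangular sense; with those bases in hand the rest is linear algebra plus the combinatorial identity already used to prove Proposition~\ref{prop:du}.
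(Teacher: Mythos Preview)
Your filtration analysis (the ``technical heart'') is on the right track and matches the paper's Step~2: one chooses action-minimizing bases for each $ECH(Y_i,\lambda_i,0)$ and shows that $H^L$ of the disjoint union is spanned by basis tensors $\sigma_{1,j_1}\otimes\cdots\otimes\sigma_{n,j_n}$ with $\sum_i c_{\sigma_{i,j_i}}<L$.  The paper proves the nontrivial direction of this via a cocycle argument (Lemma~\ref{lem:simple}), which is essentially what you describe.

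The genuine gap is in your treatment of the $U$-map condition.  Your claim that the optimal $\sigma$ ``can be taken to be a pure tensor'' is false.  If $\sigma=\sigma_1\otimes\cdots\otimes\sigma_n$ with $U_i^{k_i}\sigma_i=[\emptyset]_i$, then $U_1^{k_1}\cdots U_n^{k_n}\sigma=[\emptyset]$ for \emph{that one} splitting $(k_1,\ldots,k_n)$ only; for any other splitting $(k_1',\ldots,k_n')$ with some $k_i'>k_i$ one has $U_i^{k_i'}\sigma_i=U_i^{k_i'-k_i}[\emptyset]_i=0$, so $U_1^{k_1'}\cdots U_n^{k_n'}\sigma=0\neq[\emptyset]$.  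Hence your displayed formula with the $\min$ over splittings is simply wrong, and your subsequent ``switch from $\min$ to $\max$'' is an assertion of the answer, not an argument.

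What the paper actually does is quite different.  For the upper bound (Step~1), the class realizing $c_k$ is the \emph{sum} over all splittings,
\[
\sigma=\sum_{j_1+\cdots+j_n=k}\sigma_{1,j_1}\otimes\cdots\otimes\sigma_{n,j_n},
\]
and one checks directly that applying $U_1^{k_1}\cdots U_n^{k_n}$ kills every summand except the one with $(j_1,\ldots,j_n)=(k_1,\ldots,k_n)$, which gives $[\emptyset]$.  This $\sigma$ lies in filtration $L$ as soon as $L$ exceeds the \emph{maximum} of $\sum_i c_{j_i}(Y_i,\lambda_i)$ over all splittings, whence the $\max$.  For the lower bound (Step~3), the paper does not reduce to pure tensors at all: given $\sigma\in H^L$ with $L=\sum_i c_{k_i}(Y_i,\lambda_i)$ for a fixed splitting, one constructs functionals $\zeta_i\in\Hom(H(Y_i,\lambda_i),\Z/2)$ with $\zeta_i([\emptyset]_i)=1$ but $\zeta_i$ annihilating $U^{k_i}(H^{c_{k_i}}(Y_i,\lambda_i))$, and then $\zeta_1\otimes\cdots\otimes\zeta_n$ separates $[\emptyset]$ from $U_1^{k_1}\cdots U_n^{k_n}\sigma$.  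Your sketch does not contain either of these ideas.
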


The proofs of the above two propositions require an algebraic
digression which is deferred to \S\ref{sec:interlude}.

\begin{proposition}
\label{prop:cobtilde}
If $(X,\omega)$ is a weakly exact symplectic cobordism from
$(Y_+,\lambda_+)$ to $(Y_-,\lambda_-)$, then
\[
c_k(Y_+,\lambda_+) \ge c_k(Y_-,\lambda_-)
\]
for each nonnegative integer $k$.
\end{proposition}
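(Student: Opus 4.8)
\emph{Proof plan.} The plan is to reduce to the case where $\lambda_\pm$ are nondegenerate and then deduce the inequality from Lemma~\ref{lem:ctilde} together with parts (b) and (d) of Theorem~\ref{thm:cob}. Throughout I assume that $c(\xi_+)\neq 0$ and $c(\xi_-)\neq 0$, as is implicit in the statement, since $c_k$ is only defined for contact manifolds with nonvanishing ECH contact invariant. Suppose first that $\lambda_\pm$ are nondegenerate; if $c_k(Y_+,\lambda_+)=\infty$ there is nothing to prove, so assume it is finite and write $\Phi=\Phi(X,\omega)$. The heart of the argument will be to show that if $\sigma\in ECH(Y_+,\lambda_+,0)$ is one of the classes occurring in the minimum defining $c_k(Y_+,\lambda_+)$ — that is, $U_{i_1}\cdots U_{i_k}\sigma=[\emptyset]$ for every tuple $(i_1,\ldots,i_k)$ of connected components of $Y_+$ — then $\Phi(\sigma)$ is one of the classes occurring in the analogous minimum for $c_k(Y_-,\lambda_-)$. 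Granting this, Lemma~\ref{lem:ctilde} gives $c_k(Y_-,\lambda_-)\le c_{\Phi(\sigma)}(Y_-,\lambda_-)\le c_\sigma(Y_+,\lambda_+)$ for every such $\sigma$, and taking the infimum over $\sigma$ on the right yields $c_k(Y_-,\lambda_-)\le c_k(Y_+,\lambda_+)$.

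To prove that claim, fix a tuple $(j_1,\ldots,j_k)$ of connected components of $Y_-$; I must show $U_{j_1}\cdots U_{j_k}\Phi(\sigma)=[\emptyset]$. The key geometric point is that every connected component of $X$ has nonempty positive boundary: if some component $X'$ had $\partial X'=-Y'_-$ with $Y'_-$ a nonempty union of components of $Y_-$, then writing $\omega=d\lambda$ on $X'$ and applying Stokes's theorem to $d(\lambda\wedge\omega)$ would force the symplectic volume $\int_{X'}\omega\wedge\omega$ to equal $-\int_{Y'_-}\lambda_-\wedge d\lambda_-<0$, which is impossible. Hence for each $\ell$ the connected component $X_\ell$ of $X$ containing $Y_{-,j_\ell}$ also contains some connected component $Y_{+,i_\ell}$ of $Y_+$, and Theorem~\ref{thm:cob}(d), applied to the $U$ maps for $Y_{-,j_\ell}$ and $Y_{+,i_\ell}$ — which correspond to the same component $X_\ell$ — gives $U_{j_\ell}\circ\Phi=\Phi\circ U_{i_\ell}$. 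Peeling off one $U$ map at a time then yields
\[
U_{j_1}\cdots U_{j_k}\Phi(\sigma)=\Phi\left(U_{i_1}\cdots U_{i_k}\sigma\right)=\Phi([\emptyset])=[\emptyset],
\]
using the defining property of $\sigma$ for the middle equality and Theorem~\ref{thm:cob}(b) for the last; since $c(\xi_-)\neq 0$, this equation also forces $\Phi(\sigma)\neq 0$, so that $c_{\Phi(\sigma)}(Y_-,\lambda_-)$ is defined. This proves the claim, and with it the proposition in the nondegenerate case.

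For possibly degenerate $\lambda_\pm$ I would argue by approximation, as in the proofs of Lemma~\ref{lem:ckpm} and Proposition~\ref{prop:monotone2}: given nondegenerate contact forms $f_+\lambda_+$ and $f_-\lambda_-$ with $f_+\colon Y_+\to[1,\infty)$ and $f_-\colon Y_-\to(0,1]$, attach the symplectization collars $\{1\le e^s\le f_+\}$ and $\{f_-\le e^s\le 1\}$ to the two boundary components of $X$ to obtain a weakly exact symplectic cobordism from $(Y_+,f_+\lambda_+)$ to $(Y_-,f_-\lambda_-)$ with nondegenerate ends and the same component structure as $X$, apply the nondegenerate case to get $c_k(Y_-,f_-\lambda_-)\le c_k(Y_+,f_+\lambda_+)$, and then take the supremum over $f_-$ and the infimum over $f_+$, using the analogue of Lemma~\ref{lem:ckpm} for the distinguished ECH spectrum. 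The main obstacle I anticipate is this last step: unlike in Lemma~\ref{lem:ckpm}, $c_k$ is a minimum over infinitely many classes $\sigma$, so one must check that this minimum behaves well under the supremum and infimum over $C^0$-small perturbations of the contact form. I expect this to be tractable because, by the scaling isomorphism \eqref{eqn:scaling}, a multiplicatively $C^0$-small change of contact form rescales every $c_\sigma$ by the same, uniformly bounded, multiplicative factor; hence so does $c_k$, and the argument of Lemma~\ref{lem:ckpm} carries over after taking the minimum over $\sigma$. By contrast, the $U$-map bookkeeping in the nondegenerate case — the conceptual core of the argument — is routine once the Stokes observation above is in hand.
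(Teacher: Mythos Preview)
Your proof is correct and follows essentially the same route as the paper: reduce to nondegenerate contact forms by the approximation argument of Proposition~\ref{prop:monotone2}, then push a minimizing class $\sigma$ through $\Phi(X,\omega)$ and use Theorem~\ref{thm:cob}(b),(d) together with Lemma~\ref{lem:ctilde}, noting that every component of $X$ has a positive boundary component. Your Stokes argument for the latter fact and your discussion of the approximation step are more explicit than the paper's, but the underlying ideas are identical.
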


\begin{proof}
  By the approximation argument in the proof of
  Proposition~\ref{prop:monotone2}, we may assume that $\lambda_+$ and
  $\lambda_-$ are nondegenerate.

  Let $Y_\pm^1,\ldots,Y_\pm^{n_\pm}$ denote the connected components
  of $Y_\pm$. Let $\sigma_+\in ECH(Y_+,\lambda_+,0)$ be a class with
  $U_{i_1}\cdots U_{i_k}\sigma=[\emptyset]$ for all
  $i_1,\ldots,i_k\in\{1,\ldots,n_+\}$.  Let $\sigma_-\eqdef
  \Phi(X,\omega)(\sigma_+)\in ECH(Y_-,\lambda_-,0)$.  Since each
  component of the cobordism $X$ has at least one positive boundary
  component, it follows from Theorem~\ref{thm:cob}(b),(d) that
  $U_{i_1}\cdots U_{i_k}\sigma_-=[\emptyset]$ for all
  $i_1,\ldots,i_k\in\{1,\ldots,n_-\}$.  By Lemma~\ref{lem:ctilde} we
  have $c_{\sigma_+}(Y_+,\lambda_+) \ge c_{\sigma_-}(Y_-,\lambda_-)$.
\end{proof}

\begin{definition}
  By analogy with Definition~\ref{def:capacity}, if $(X,\omega)$ is a
  4-dimensional Liouville domain with boundary $Y$, and if $k$ is a
  nonnegative integer, define
\[
c_k(X,\omega) \eqdef c_k(Y,\lambda),
\]
where $\lambda$ is a contact form on $Y$ with $d\lambda=\omega|_Y$.
Lemma~\ref{lem:dlambda} shows that this does not depend on the choice
of contact form $\lambda$, just like the full ECH capacities.  The
numbers $c_k(X,\omega)$ are called the {\em (distinguished) ECH
  capacities\/} of $(X,\omega)$.
\end{definition}

We can now prove the main symplectic embedding obstruction:

\begin{proof}[Proof of Theorem~\ref{thm:main}.]
  For $i=0,1$, let $Y_i=\partial X_i$ and let $\lambda_i$ be a contact
  form on $Y_i$ with $d\lambda_i={\omega_i}|_{X_i}$.  Then $X_1$ minus
  the interior of the image of $X_0$ defines a weakly exact symplectic
  cobordism from $(Y_1,\lambda_1)$ to $(Y_0,\lambda_0)$.  By
  Proposition~\ref{prop:cobtilde}, $c_k(X_0,\omega_0) \le
  c_k(X_1,\omega_1)$.  But in fact the inequality is strict when
  $c_k(X_0,\omega_0)<\infty$, because the embedding sends $X_0$ into
  the interior of $X_1$, so we can extend the embedding over
  $[0,\epsilon]\times Y_0$ in the symplectization completion
  \eqref{eqn:completion} of $X_0$ for some $\epsilon>0$.  The above
  argument together with the scaling isomorphism \eqref{eqn:scaling}
  then shows that $e^{\epsilon}c_k(X_0,\omega_0)\le
  c_k(X_1,\omega_1)$.
\end{proof}

\subsection{More general domains}
\label{sec:mgd}

We now explain how to extend the definition of the (distinguished) ECH
capacities to some more general spaces.

\begin{definition}
\label{def:extendck}
  Let $(X,\omega)$ be a  subset of a symplectic
  four-manifold.  If $k$ is a positive integer, define
\[
c_k(X,\omega) \eqdef \sup\{c_k(X_-,\omega)\},
\]
where the supremum is over subsets $X_-\subset \op{int}(X)$ such that
$(X_-,\omega)$ is a four-dimensional Liouville domain.
\end{definition}

By definition, $c_k(X,\omega)$ depends only on the symplectic form on
$\op{int}(X)$, and not on the symplectic four-manifold of which $X$ is
a subset.  If $(X,\omega)$ is already a four-dimensional Liouville
domain, then by Theorem~\ref{thm:main} the above definition of
$c_k(X,\omega)$ agrees with the previous one.

\begin{remark}
  One could also try to define the full ECH capacities of a 
  subset of a symplectic four-manifold as in
  Definition~\ref{def:extendck}.  However it is not clear if this
  would agree with the previous definition for Liouville domains,
  because of the extra assumption in
  Proposition~\ref{prop:embeddingObstruction}.  This is another way in
  which distinguished ECH capacities work better than full ECH
  capacities.
\end{remark}

We now have the following extension of Theorem~\ref{thm:main}:

\begin{proposition}
\label{prop:extend}
Suppose that $(X_i,\omega_i)$ is a
 subset of a symplectic four-manifold for $i=0,1$.  If
there is a symplectic embedding $\varphi:X_0\to\op{int}(X_1)$, then
$c_k(X_0,\omega_0)\le c_k(X_1,\omega_1)$ for all $k$.
\end{proposition}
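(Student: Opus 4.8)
The plan is to deduce the proposition formally from Theorem~\ref{thm:main} (the Liouville-domain case) together with Definition~\ref{def:extendck}, which is used on both sides. First I would fix an arbitrary four-dimensional Liouville domain $X_-\subset\op{int}(X_0)$ and check that its image $\varphi(X_-)$ is again a four-dimensional Liouville domain sitting inside $\op{int}(X_1)$. Indeed, $\varphi$ is a diffeomorphism onto its image with $\varphi^*\omega_1=\omega_0$, so $(\varphi(X_-),\omega_1|_{\varphi(X_-)})$ is symplectomorphic to $(X_-,\omega_0)$ via $\varphi$; in particular $\omega_1$ is exact on $\varphi(X_-)$, and if $\lambda$ is a contact form on $\partial X_-$ with $d\lambda=\omega_0|_{\partial X_-}$, then the push-forward $(\varphi|_{\partial X_-})_*\lambda$ is a contact form on $\partial\varphi(X_-)$ whose differential is $\omega_1|_{\partial\varphi(X_-)}$. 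Moreover $\varphi(X_-)\subset\varphi(X_0)\subset\op{int}(X_1)$, so $\varphi(X_-)$ is one of the Liouville domains occurring in the supremum of Definition~\ref{def:extendck} for $(X_1,\omega_1)$; hence $c_k(\varphi(X_-),\omega_1)\le c_k(X_1,\omega_1)$.

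Next I would use that the (distinguished) ECH capacities of a four-dimensional Liouville domain are invariant under symplectomorphism. This is immediate from the construction: $c_k$ is defined in terms of the contact form on the boundary, filtered ECH, the ECH contact invariant, and the $U$ maps, all of which are natural under a strict contactomorphism of the boundary, and $\varphi|_{\partial X_-}$ is a strict contactomorphism from $(\partial X_-,\lambda)$ to $(\partial\varphi(X_-),(\varphi|_{\partial X_-})_*\lambda)$ (cf.\ Lemma~\ref{lem:dlambda} and the definitions in \S\ref{sec:dbp}). Therefore $c_k(\varphi(X_-),\omega_1)=c_k(X_-,\omega_0)$. Combining with the previous paragraph,
\[
c_k(X_-,\omega_0)=c_k(\varphi(X_-),\omega_1)\le c_k(X_1,\omega_1)
\]
for every four-dimensional Liouville domain $X_-\subset\op{int}(X_0)$. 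Taking the supremum over all such $X_-$ and invoking Definition~\ref{def:extendck} for $(X_0,\omega_0)$ gives $c_k(X_0,\omega_0)\le c_k(X_1,\omega_1)$, as desired.

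There is no real obstacle here; the statement is a formal consequence of the way ECH capacities of subsets are defined, once Theorem~\ref{thm:main} is available. The only step that needs a word of justification — and the closest thing to a ``hard part'' — is the symplectomorphism-invariance of the ECH capacities of a genuine Liouville domain, which however follows directly from the naturality of filtered ECH, the contact invariant, and the $U$ maps under strict contactomorphisms; the remaining verifications (that $\varphi$ carries a Liouville domain in $\op{int}(X_0)$ to a Liouville domain in $\op{int}(X_1)$, and the two applications of Definition~\ref{def:extendck}) are routine. One could alternatively avoid the invariance step by slightly enlarging $X_-$ and $\varphi(X_-)$ using the respective Liouville flows and applying Theorem~\ref{thm:main} directly, but the argument above is cleaner.
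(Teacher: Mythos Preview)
Your proof is correct and follows the same route as the paper's: pick a Liouville domain $X_-\subset\op{int}(X_0)$, observe that $\varphi(X_-)$ is a Liouville domain in $\op{int}(X_1)$, invoke Definition~\ref{def:extendck} for $(X_1,\omega_1)$, and then take the supremum over $X_-$. The paper calls this ``a tautology'' and leaves the symplectomorphism-invariance of $c_k$ for Liouville domains implicit in the sentence ``by Definition~\ref{def:extendck}'', whereas you spell it out; that extra care is fine but not a different approach.
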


\begin{proof}
  This is a tautology. Let $X_-$ be a subset of $\op{int}(X_0)$ such
  that $(X_-,\omega_0)$ is a four-dimensional Liouville domain.  Then
  $\varphi$ restricts to a symplectic embedding of $X_-$ into
  $\op{int}(X_1)$, so by Definition~\ref{def:extendck},
\[
c_k(X_-,\omega_0) \le c_k(X_1,\omega_1).
\]
Taking the supremum over $X_-$ on the left hand side completes the proof.
\end{proof}

Note also that Proposition~\ref{prop:du} extends to the case when each
$(X_i,\omega_i)$ is a  subset of a symplectic
four-manifold.

\section{Algebraic interlude}
\label{sec:interlude}

The goal of this section is to prove Propositions~\ref{prop:dss3} and
\ref{prop:dsdu}.  To simplify the notation, in this section  write
$H(Y,\lambda)\eqdef ECH(Y,\lambda,0)$, and let $H^L(Y,\lambda)$
denote the image of $ECH^L(Y,\lambda,0)$ in $ECH(Y,\lambda,0)$.  Also
write $C_*(Y,\lambda,J)\eqdef ECC(Y,\lambda,0,J)$, and let
$C^*(Y,\lambda,J)$ denote the dual chain complex
$\Hom(C_*(Y,\lambda,J),\Z/2)$.

\begin{definition}
  Let $\lambda$ be a nondegenerate contact form on a closed
  $3$-manifold $Y$.  A basis $\{\sigma_k\}_{k=1,2,\ldots}$ for
  $H(Y,\lambda)$ is {\em action-minimizing\/} if
\begin{equation}
\label{eqn:csigmak}
c_{\sigma_k}(Y,\lambda)= \widetilde{c}_k(Y,\lambda)
\end{equation}
for all $k$.
\end{definition}

\begin{lemma}
\label{lem:amb}
Let $\lambda$ be a nondegenerate contact form on a closed $3$-manifold
$Y$. Then:
\begin{description}
\item{(a)}
There exists an action-minimizing basis for
$H(Y,\lambda)$.
\item{(b)} If $\{\sigma_k\}$ is an action-minimizing basis for
  $H(Y,\lambda)$, and if $0\neq\sigma=\sum_ja_j\sigma_j\in
  H(Y,\lambda)$, then
\begin{equation}
\label{eqn:usefulbasis}
c_\sigma(Y,\lambda)=\widetilde{c}_k(Y,\lambda)
\end{equation}
where $k$ is the largest integer such that $a_k\neq 0$.
\end{description}
\end{lemma}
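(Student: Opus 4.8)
The plan is to reduce both parts to elementary linear algebra, once the step-function structure of the symplectic action filtration is in hand. Abbreviate $H = H(Y,\lambda)$ and write $H^L$ for $H^L(Y,\lambda)$, so that $H = \bigcup_L H^L$. Since $\lambda$ is nondegenerate and $Y$ is closed, only finitely many ECH generators have symplectic action below any given bound, so the set of action values of ECH generators is discrete, say $\mc{A}_1 < \mc{A}_2 < \cdots$ with $\mc{A}_1 = 0$ realized by $\emptyset$. The key structural fact is that $ECC^L(Y,\lambda,0,J)$, hence also $H^L$ and its dimension $D(L) := \dim H^L$, is constant for $L$ in each interval $(\mc{A}_i,\mc{A}_{i+1}]$, and is zero for $L \le 0$; write $H^{(i)}$ and $N_i$ for these common values, so $0 = H^{(0)} \subseteq H^{(1)} \subseteq \cdots$ is an increasing sequence of finite-dimensional subspaces with union $H$. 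Directly from the definitions, for $0 \ne \sigma \in H$ one has $c_\sigma(Y,\lambda) = \inf\{L : \sigma \in H^L\}$ and $\widetilde{c}_k(Y,\lambda) = \inf\{L : D(L) \ge k\}$; and since the conditions $\sigma \in H^L$ and $D(L) \ge k$ are monotone in $L$ and change only at the $\mc{A}_i$, we get $\sigma \in H^L$ iff $L > c_\sigma(Y,\lambda)$, and $D(L) \ge k$ iff $L > \widetilde{c}_k(Y,\lambda)$.

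For part (a) I would build the basis one step at a time. Put $m_i := N_i - N_{i-1} \ge 0$, and for each $i \ge 1$ choose classes $\sigma_{N_{i-1}+1},\dots,\sigma_{N_i} \in H^{(i)}$ whose images form a basis of $H^{(i)}/H^{(i-1)}$; then inductively $\{\sigma_1,\dots,\sigma_{N_i}\}$ is a basis of $H^{(i)}$, and since $\bigcup_i H^{(i)} = H$ the sequence $\{\sigma_k\}$ is a basis of $H$. For each $L$ we have $H^L = \Span(\sigma_1,\dots,\sigma_{D(L)})$ (for $L \in (\mc{A}_i,\mc{A}_{i+1}]$ both sides equal $H^{(i)}$, as $D(L) = N_i$), so by linear independence a basis element $\sigma_k$ lies in $H^L$ exactly when $k \le D(L)$; hence $c_{\sigma_k}(Y,\lambda) = \inf\{L : D(L) \ge k\} = \widetilde{c}_k(Y,\lambda)$, i.e.\ the basis is action-minimizing. (If $H$ is finite-dimensional the sequence terminates; this does not occur in the intended applications, where $c_1(\xi)$ is torsion, cf.\ Remark~\ref{rem:spec}.)

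For part (b), note that the identity $H^L = \Span(\sigma_1,\dots,\sigma_{D(L)})$ in fact holds for \emph{any} action-minimizing basis $\{\sigma_k\}$: from $c_{\sigma_k} = \widetilde{c}_k$ and the characterizations in the first paragraph, $\sigma_k \in H^L$ iff $L > \widetilde{c}_k$ iff $D(L) \ge k$, so $\sigma_1,\dots,\sigma_{D(L)}$ are $D(L)$ linearly independent vectors spanning the $D(L)$-dimensional space $H^L$. Granting this, if $0 \ne \sigma = \sum_{j=1}^k a_j\sigma_j$ with $a_k \ne 0$, then since $\{\sigma_j\}$ is a basis of $H$ we have $\sigma \in \Span(\sigma_1,\dots,\sigma_{D(L)})$ iff $D(L) \ge k$, whence $c_\sigma(Y,\lambda) = \inf\{L : \sigma \in H^L\} = \inf\{L : D(L) \ge k\} = \widetilde{c}_k(Y,\lambda)$. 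In particular this recovers the observation made just before the lemma, that every $c_\sigma(Y,\lambda)$ with $\sigma \in ECH(Y,\lambda,0)$ equals some $\widetilde{c}_k(Y,\lambda)$.

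I do not anticipate a genuine obstacle: the only non-formal input is the discreteness of the action spectrum, which is standard for nondegenerate contact forms on closed manifolds, and the rest is definition-chasing together with the linear algebra above. The one point that repays care is the bookkeeping of strict versus nonstrict inequalities — notably that $\sigma \in H^L$ holds for $L$ strictly greater than $c_\sigma(Y,\lambda)$, because $ECC^L$ is spanned by generators of action strictly less than $L$ — which is precisely what lets the one-sided estimates collapse to an equality rather than to a mere bound.
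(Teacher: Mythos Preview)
Your proof is correct and follows essentially the same approach as the paper: both construct the basis by successively extending through the quotients $H^{(i)}/H^{(i-1)}$ of the action filtration, and both reduce part (b) to the observation that for any action-minimizing basis one has $H^L=\Span(\sigma_1,\dots,\sigma_{D(L)})$. Your presentation of (b) is slightly more direct (the paper phrases the lower bound as a contradiction argument), but the content is identical.
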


\begin{proof}
  (a) To construct an action-minimizing basis, increase $L$ starting
  from $0$, and whenever the dimension of $H^L(Y,\lambda)$
  jumps, add new basis elements to span the rest of it.  More
  precisely, there is a discrete set of nonnegative real numbers $L$
  such that
\[
\dim(H^{L+\epsilon}(Y,\lambda))>\dim(H^L(Y,\lambda))
\]
for all $\epsilon>0$.  Denote these real numbers by $0\le L_1 < L_2 <
\cdots$.  There are then integers $0=k_0<k_1<k_2<\cdots$ such that
\begin{equation}
\label{eqn:cki}
k_{i-1}<k\le k_i \;\Longrightarrow\; \widetilde{c}_k(Y,\lambda)=L_i.
\end{equation}
Now define a
basis by taking $\{\sigma_k\mid k_{i-1}<k\le k_i\}$ to be elements of
$H^{L_{i+1}}(Y,\lambda)$ that project to a basis for
$H^{L_{i+1}}(Y,\lambda)/H^{L_i}(Y,\lambda)$.  Then equation
\eqref{eqn:csigmak} follows from the construction.

To prepare for the proof of (b), note also that conversely, by
\eqref{eqn:cki}, any action-minimizing basis is obtained by the above
construction.

(b) Continuing the notation from the proof of part (a), we have
$\widetilde{c}_k(Y,\lambda)=L_i$ for some $i$.  By equation
\eqref{eqn:csigmak}, $\sigma\in H^L(Y,\lambda)$ whenever $L>L_i$, so
$c_\sigma(Y,\lambda)\le L_i$.  To prove the reverse inequality,
suppose to get a contradiction that $\sigma\in H^{L_i}(Y,\lambda)$.
Let $\sigma'$ denote the contribution to $\sigma$ from basis elements
$\sigma_j$ with $c_{\sigma_j}(Y,\lambda)<L_i$.  Then $\sigma'\in
H^{L_i}(Y,\lambda)$, so $\sigma-\sigma'\in H^{L_i}(Y,\lambda)$ as
well.  Now $\sigma-\sigma'$ is a linear combination of the basis
elements $\{\sigma_k\mid k_{i-1}<k\le k_i\}$.  Since the latter are
linearly independent in $H^{L_{i+1}}(Y,\lambda)/H^{L_i}(Y,\lambda)$,
it follows that $\sigma-\sigma'=0$, which is the desired
contradiction.
\end{proof}

\begin{remark}
\label{rem:careful}
  One has to be careful in the proof of
  Lemma~\ref{lem:amb}(b), because the equality
\begin{equation}
\label{eqn:careful}
c_{\sigma_1+\cdots+\sigma_n}(Y,\lambda) = \max\{c_{\sigma_i}(Y,\lambda)\mid
i=1,\ldots,n\}
\end{equation}
does not always hold for linearly independent elements
$\sigma_1,\ldots,\sigma_n$ of $H(Y,\lambda)$.  However
\eqref{eqn:careful} does hold if the maximum on the right hand side is
realized by a unique $i\in\{1,\ldots,n\}$, or if all of the classes
$\sigma_1,\ldots,\sigma_n$ have (definite and) distinct gradings.
\end{remark}

\begin{proof}[Proof of Proposition~\ref{prop:dss3}.]
  By the usual approximation arguments we may assume that $\lambda$ is
  nondegenerate.  Since $Y$ is a homology sphere, the relative grading
  on ECH has a canonical refinement to an absolute $\Z$-grading in
  which the empty set of Reeb orbits has grading zero.  With this
  grading convention, the ECH with $\Z/2$-coefficients is given by
\[
ECH_*(Y,\lambda,0) = \left\{\begin{array}{cl} \Z/2, & *=0,2,\ldots,\\\
0, & \mbox{otherwise.}\end{array}\right.
\]
In addition, $U:ECH_*(Y,\lambda,0)\to ECH_{*-2}(Y,\lambda,0)$ is an
isomorphism whenever $*\neq 0$.  These facts follow from the
isomorphism \eqref{eqn:echswf}, together with the computation of the
Seiberg-Witten Floer homology of $S^3$ in \cite{km}.  Finally,
$[\emptyset]$ generates $ECH_0(Y,\lambda,0)$.  This follows from the
above facts, or from direct computations for a standard tight contact
form on $S^3$, see \cite[Ex.\ 4.2]{wh}.

Now let $\sigma_k$ denote the generator of $ECH_{2k}(Y,\lambda,0)$.
Since the $U$ map decreases symplectic action we have
\begin{equation}
\label{eqn:Ud}
0 = c_{\sigma_0}(Y,\lambda) < c_{\sigma_1}(Y,\lambda) < \cdots <\infty.
\end{equation}
It follows from \eqref{eqn:Ud} and Remark~\ref{rem:careful} that
$c_{\sigma_k}(Y,\lambda)=\widetilde{c}_{k+1}(Y,\lambda)$.  Now a class
$\sigma=\sum_ja_j\sigma_j$ satisfies $U^k\sigma=[\emptyset]$ if and
only if $a_k=1$ and $a_j=0$ for $j>k$.  By Lemma~\ref{lem:amb}(b),
each such class $\sigma$ satisfies $c_\sigma(Y,\lambda) =
\widetilde{c}_{k+1}(Y,\lambda)$.
\end{proof}

Before continuing, we need to recall the following elementary fact:

\begin{lemma}
\label{lem:simple}
Let $(C_*,\partial)$ be a chain complex over a field ${\mathbb F}$,
and let $C_*'\subset C_*$ be a subcomplex.  Suppose
$\alpha_1,\ldots,\alpha_n\in H_*(C_*)$ are linearly independent in
$H_*(C_*)/H_*(C_*')$, and let $y_1,\ldots,y_n\in {\mathbb F}$.  Then
there exists a cocycle $\zeta\in \Hom(C_*,{\mathbb F})$ which
annihilates $C_*'$ and sends $\alpha_i\mapsto y_i$ for each $i$.
\end{lemma}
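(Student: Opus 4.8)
The plan is to translate the statement into elementary linear algebra over $\mathbb F$. Recall that an element $\zeta\in\Hom(C_*,\mathbb F)$ is a cocycle for the dual differential $\delta\zeta=\zeta\circ\partial$ precisely when $\zeta$ vanishes on the subspace $B_*\eqdef\op{im}(\partial)\subset C_*$. Choose cycles $a_1,\ldots,a_n\in\Ker(\partial)$ representing the classes $\alpha_1,\ldots,\alpha_n$. A cocycle $\zeta$ vanishing on $B_*$ has a well-defined value on each homology class (if $a_i,a_i'$ both represent $\alpha_i$ then $a_i-a_i'\in B_*$), and $\zeta$ annihilates the subcomplex $C'_*$ iff $\zeta|_{C'_*}=0$. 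So it suffices to produce a linear functional $\zeta:C_*\to\mathbb F$ with $\zeta|_{B_*+C'_*}=0$ and $\zeta(a_i)=y_i$ for all $i$.

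First I would set $W\eqdef B_*+C'_*+\Span\{a_1,\ldots,a_n\}\subset C_*$ and try to define $\zeta$ on $W$ by the rules just stated, extending afterward to all of $C_*$ by choosing any linear complement of $W$ (which exists since $\mathbb F$ is a field) and setting $\zeta=0$ there. The only thing to verify is that the assignment on $W$ is consistent, i.e.\ that whenever $\sum_i c_i a_i = b + c'$ with $c_i\in\mathbb F$, $b\in B_*$, and $c'\in C'_*$, one has $\sum_i c_i y_i = 0$.

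To check this, apply $\partial$ to the relation $\sum_i c_i a_i = b + c'$: the left-hand side is zero since the $a_i$ are cycles, and $\partial b=0$, so $\partial c'=0$; hence $c'$ is in fact a cycle in $C'_*$, representing a class $[c']\in H_*(C'_*)$. Passing to $H_*(C_*)=\Ker(\partial)/B_*$, the relation reads $\sum_i c_i\alpha_i = \iota_*[c']$, where $\iota_*:H_*(C'_*)\to H_*(C_*)$ is induced by the inclusion. Thus $\sum_i c_i\alpha_i$ lies in the image of $\iota_*$, so it vanishes in the quotient $H_*(C_*)/H_*(C'_*)$; by the linear independence hypothesis, $c_i=0$ for all $i$, and in particular $\sum_i c_i y_i=0$. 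This is exactly what was needed.

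There is no serious obstacle here; the argument is pure linear algebra once the cocycle condition is unwound. The one point requiring a little care — the step I would single out — is the consistency check above: one must remember that $c'$ need not be a cycle a priori and recover this by applying $\partial$, and one must read ``$H_*(C_*)/H_*(C'_*)$'' correctly as the quotient by the image of $\iota_*$, which need not be injective. If one wants $\zeta$ homogeneous in the case that the $\alpha_i$ all lie in a single degree, the same construction applies degree by degree; in general one simply runs it on the ungraded vector space $\bigoplus_d C_d$, and the resulting functional is automatically a sum of homogeneous cocycles since $B_*=\bigoplus_d B_d$ is a graded subspace.
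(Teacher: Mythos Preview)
Your argument is correct and follows essentially the same route as the paper's proof: choose cycle representatives, observe that they are linearly independent modulo $C'_*+\partial(C_*)$ (this is precisely your consistency check), and then extend the desired functional from that subspace to all of $C_*$. The paper compresses your verification into the single phrase ``by hypothesis, $x_1,\ldots,x_n$ project to linearly independent elements of $C_*/(C'_*+\partial(C_*))$,'' but the content is identical.
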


\begin{proof}
  Let $x_i\in C_*$ be a cycle representing the homology class
  $\alpha_i$.  By hypothesis, $x_1,\ldots,x_n$ project to linearly
  independent elements of $C_*/(C_*' + \partial(C_*))$.  Hence there
  is a linear map $\zeta:C_*\to {\mathbb F}$ sending $x_i\mapsto y_i$
  for each $i$ and annihilating the subspace $C_*'+\partial(C_*)$.
  This is the desired cocycle.
\end{proof}

\begin{proof}[Proof of Proposition~\ref{prop:dsdu}.]
  By the usual approximation argument, we may assume that the contact
  forms $\lambda_i$ are nondegenerate.  We can also assume that each
  $Y_i$ is connected.  We now proceed in three steps.

  {\em Step 1.\/} We first show that the left hand side of
  \eqref{eqn:dsdu} is less than or equal to the right hand side.  We
  can assume that the right hand side is finite.  For each
  $i=1,\ldots,n$ and $j\ge 0$ with $c_j(Y_i,\lambda_i)<\infty$, choose
  a class $\sigma_{i,j}\in H(Y_i,\lambda_i)$ with
  $U^j\sigma_{i,j}=[\emptyset]$, such that $\sigma_{i,j}\in
  H^L(Y_i,\lambda_i)$ whenever $L>c_j(Y_i,\lambda_i)$.  Recalling the
  identification \eqref{eqn:tensor}, define a class
\[
\sigma \eqdef
\sum_{j_1+\cdots+j_n=k}\sigma_{1,j_1}\tensor\cdots\tensor
\sigma_{n,j_n} \in H\left(\coprod_{i=1}^n(Y_i,\lambda_i)\right).
\]
Since symplectic action is additive under tensor product, $\sigma\in
H^L\left(\coprod_i(Y_i,\lambda_i)\right)$ whenever $L$ is greater than
the right hand side of \eqref{eqn:dsdu}.  So we just need to show that
$U_{i_1}\cdots U_{i_k}\sigma=[\emptyset]$ for all
$i_1,\ldots,i_k\in\{1,\ldots,n\}$.  Equivalently, since the different
maps $U_i$ commute, we need to show that if $\sum_{i=1}^nk_i=k$ then
\[
U_1^{k_1}\cdots U_n^{k_n}\sigma=[\emptyset].
\]
To prove this last statement, observe that if $\sum_{i=1}^nj_i=k$ then
\[
U_1^{k_1}\cdots U_n^{k_n}(\sigma_{1,j_1}\tensor\cdots\tensor\sigma_{n,j_n})
= \left\{\begin{array}{cl} [\emptyset], &
    (j_1,\ldots,j_n)=(k_1,\ldots,k_n),\\
0, & \mbox{otherwise}.
\end{array}
\right.
\]
This is because if $(j_1,\ldots,j_n)\neq (k_1,\ldots,k_n)$ then
$k_i>j_i$ for some $i$, so that
\[
U_i^{k_i}\sigma_{i,j_i} = U_i^{k_i-j_i}[\emptyset] = 0,
\]
where the last equality holds since $U_i$ decreases symplectic action.

{\em Step 2.\/} We claim now that
\begin{equation}
\label{eqn:cpl}
H^L\left(\coprod_{i=1}^n(Y_i,\lambda_i)\right) 
=
\op{span}\left\{\bigotimes_{i=1}^n H^{L_i}(Y_i,\lambda_i) \;\bigg|\;
  \sum_{i=1}^n L_i \le L \right\}.
\end{equation}

To prove this, for each $i=1,\ldots,n$, let
$\{\sigma_{i,j}\}_{j=1,2,\ldots}$ be an action-minimizing basis for
$H(Y_i,\lambda_i)$.  By Lemma~\ref{lem:amb}(b), for each $i$ and $L_i$
we have
\[
H^{L_i}(Y_i,\lambda_i) = \op{span}\{\sigma_{i,j}\mid
c_{\sigma_{i,j}}(Y_i,\lambda_i) < L_i\}.
\]
Thus equation \eqref{eqn:cpl} is equivalent to
\begin{equation}
\label{eqn:cpl2}
H^L\left(\coprod_{i=1}^n(Y_i,\lambda_i)\right) 
=
\op{span}\left\{\sigma_{1,j_1}\tensor\cdots\tensor\sigma_{n,j_n}
  \;\bigg|\; \sum_{i=1}^n c_{\sigma_{i,j_i}}(Y_i,\lambda_i) < L
\right\}.
\end{equation}

The right hand side of \eqref{eqn:cpl2} is a subset of the left, as in
Step 1, because in the identification \eqref{eqn:tensorchain} the
symplectic action is additive under tensor product. To prove the
reverse inclusion, consider a class
\begin{equation}
\label{eqn:cac}
\sigma = \sum_{j_1,\ldots,j_n}a_{j_1,\ldots,j_n}
\sigma_{1,j_1}\tensor\cdots\tensor\sigma_{n,j_n} \in
H\left(\coprod_{i=1}^n(Y_i,\lambda_i)\right).
\end{equation}
Let
\[
L' \eqdef \max\left\{\sum_{i=1}^n c_{\sigma_{i,j_i}}(Y_i,\lambda_i) \;\bigg|\;
  a_{j_1,\ldots,j_n}\neq 0\right\}.
\]
We need to show that
$\sigma\notin H^{L'}\left(\coprod_i(Y_i,\lambda_i)\right)$.

To do so, choose $(j_1,\ldots,j_n)$ with $a_{j_1,\ldots,j_n}\neq 0$
and $c_{\sigma_{i,j_i}}(Y_i,\lambda_i)=L_i$ where $\sum_{i=1}^nL_i =
L'$.  Choose an almost complex structure $J_i$ on $\R\times Y_i$ as
needed to define the ECH of $\lambda_i$.  By Lemmas~\ref{lem:amb}(b)
and \ref{lem:simple}, there is a cocycle $\zeta_i\in
C^*(Y_i,\lambda_i,J_i)$ sending $\sigma_{i,j_i}\mapsto 1$,
annihilating all other basis elements $\sigma_{i,j}$ with
$c_{\sigma_{i,j}}(Y_i,\lambda_i)=L_i$, and annihilating all ECH
generators with action less than $L_i$.  Then
\[
\zeta_1\tensor\cdots\tensor\zeta_n\in
C^*\left(\coprod_{i=1}^n(Y_i,\lambda_i,J_i)\right)
\]
sends $\sigma\mapsto 1$ and annihilates
$H^{L'}\left(\coprod_i(Y_i,\lambda_i)\right)$.  Therefore $\sigma\notin
H^{L'}\left(\coprod_i(Y_i,\lambda_i)\right)$.

{\em Step 3.\/} We now show that the left hand side of
\eqref{eqn:dsdu} is greater than or equal to the right hand side.  We
need to show that if $\sum_{i=1}^nk_i=k$ then
\[
c_k\left(\coprod_{i=1}^n(Y_i,\lambda_i)\right) \ge
\sum_{i=1}^nc_{k_i}(Y_i,\lambda_i).
\]
To do so, let $L\eqdef\sum_{i=1}^n c_{k_i}(Y_i,\lambda_i)$.  We will show
that if $\sigma\in H^L\left(\coprod_i(Y_i,\lambda_i)\right)$, then
$U_1^{k_1}\cdots U_n^{k_n}\sigma\neq[\emptyset]$.

Expand $\sigma$ as in \eqref{eqn:cac}.  By Step 2,
\begin{equation}
\label{eqn:ub}
a_{j_1,\ldots,j_n}\neq 0 \;\Longrightarrow\;
\sum_{i=1}^nc_{\sigma_{j_i}}(Y_i,\lambda_i) < L.
\end{equation}

Next, for each $i=1,\ldots,n$, we can choose
$\zeta_i\in\Hom(H(Y_i,\lambda_i),\Z/2)$ with the following two
properties:
\begin{description}
\item{(i)}
$\zeta_i([\emptyset])=1$.
\item{(ii)} $\zeta_i$ annihilates
  $U^{k_i}\left(H^{c_{k_i}(Y_i,\lambda_i)}(Y_i,\lambda_i)\right)$.
\end{description}
Now let
\[
\zeta = \zeta_1\tensor\cdots\tensor\zeta_n \in
\Hom\left(H\left(\coprod_i(Y_i,\lambda_i)\right),\Z/2\right).
\]
By property (i) we have $\zeta([\emptyset])=1$.  On the other hand,
\[
\begin{split}
\zeta\left(U_1^{k_1}\cdots U_n^{k_n}\sigma\right) & = \left(\zeta_1\circ
U_1^{k_1}\right)\tensor\cdots\tensor \left(\zeta_n\circ U_n^{k_n}\right)\sigma\\
&= \sum_{j_1,\ldots,j_n}a_{j_1,\ldots,j_n} \prod_{i=1}^n
\zeta_i\left(U_i^{k_i}\sigma_{i,j_i}\right)\\
&= 0,
\end{split}
\]
where the last equality follows from \eqref{eqn:ub} and (ii).  Thus
$U_1^{k_1}\cdots U_n^{k_n}\sigma\neq[\emptyset]$ as desired.
\end{proof}

\section{The 3-torus}
\label{sec:t3}

We now compute the distinguished ECH spectrum of the 3-torus with various
contact forms.

\subsection{Distinguished ECH spectrum of the standard 3-torus}
\label{sec:dest3}

Consider the 3-torus
\begin{equation}
\label{eqn:t3}
Y=T^3=(\R/2\pi\Z)_\theta\times(\R^2/\Z^2)_{x,y}
\end{equation}
with the standard contact form
\begin{equation}
\label{eqn:stdt3}
\lambda=\cos\theta\,dx + \sin\theta\,dy.
\end{equation}
The ECH of this example was studied in detail in \cite{t3}.  Using
these results, we can now compute the distinguished ECH spectrum:

\begin{proposition}
\label{prop:standardt3}
If $k$ is a nonnegative integer then
\begin{equation}
\label{eqn:T3spectrum}
c_k(T^3,\lambda) = \min\left\{\ell(\Lambda) \,\big|\,
  |P_\Lambda\cap\Z^2|=k+1 \right\}.
\end{equation}
Here the minimum is over convex polygons $\Lambda$ in $\R^2$ with
vertices in $\Z^2$, and $P_\Lambda$ denotes the closed region bounded
by $\Lambda$.  Also $\ell(\Lambda)$ denotes the Euclidean length of $\Lambda$.
\end{proposition}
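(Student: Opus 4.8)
The plan is to translate the statement into a problem about lattice polygons, using the computation of the embedded contact homology of $(T^3,\lambda)$ from \cite{t3}, and then to solve that combinatorial problem. By Definition~\ref{def:csigma} it suffices to work with a nondegenerate contact form $f\lambda$ (with $f\le 1$) and take a supremum over $f$; since $\lambda$ is Morse--Bott, with a circle of Reeb orbits in each primitive class $v\in\Z^2$ of Euclidean action $|v|$, a generic small perturbation replaces each such circle by one elliptic orbit $e_v$ and one hyperbolic orbit $h_v$. An ECH generator in the class $\Gamma=0$ is then a product $\prod_v e_v^{m_v}\prod_{v\in S}h_v$ with $m_v\ge 0$ almost all zero, $S$ a finite set of primitive vectors, and $\sum_v m_v v+\sum_{v\in S}v=0$. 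Listing the contributions $m_v v$ and (for $v\in S$) $v$ in order of angle and concatenating produces the boundary of a convex lattice polygon $\Lambda$, well defined up to translation; one checks (as in \cite{t3}) that the symplectic action of the generator equals the Euclidean perimeter $\ell(\Lambda)$, and that $\emptyset$ corresponds to the degenerate one-point polygon. So $c_k(T^3,\lambda)$ is the least perimeter $\ell(\Lambda)$ occurring among the polygons ``used'' by a class $\sigma$ with $U^k\sigma=[\emptyset]$, and everything reduces to understanding the $U$ map in these terms.

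The key input I would take from \cite{t3} is that, under this dictionary, the $U$ map behaves like ``delete one lattice point from the enclosed region $P_\Lambda$'': in particular, $U^k$ annihilates every homology class supported on polygons with at most $k$ lattice points (since the only polygon with one lattice point is the one-point polygon, whose only generator is $\emptyset$, iterating $U$ lands such classes in $\Z/2\cdot[\emptyset]$ and one further application kills them). Granting this, choose an action-minimizing basis of $ECH(T^3,\lambda,0)$ as in Lemma~\ref{lem:amb}, consisting of homology classes of generators ordered by $\ell(\Lambda)$. If $U^k\sigma=[\emptyset]$, then expanding $\sigma$ in this basis, the terms supported on polygons with at most $k$ lattice points are killed by $U^k$, so some term with nonzero coefficient is supported on a polygon $\Lambda$ with $|P_\Lambda\cap\Z^2|\ge k+1$; by Lemma~\ref{lem:amb}(b) (see also Remark~\ref{rem:careful}, which applies because we use an action-minimizing basis), $c_\sigma(T^3,\lambda)\ge\ell(\Lambda)\ge\min\{\ell(\Lambda')\mid |P_{\Lambda'}\cap\Z^2|\ge k+1\}$. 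Conversely, for the polygon realizing this last minimum, the computation in \cite{t3} yields a class $\sigma$ supported on it with $U^k\sigma=[\emptyset]$, giving the reverse inequality. Hence $c_k(T^3,\lambda)=\min\{\ell(\Lambda)\mid |P_\Lambda\cap\Z^2|\ge k+1\}$.

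Finally I would prove that this minimum is unchanged if one instead requires $|P_\Lambda\cap\Z^2|=k+1$. A minimizer exists, since up to translation there are finitely many lattice polygons of perimeter below any bound, and we may assume it equals the convex hull of the lattice points it contains (passing to that hull never increases the perimeter nor changes the lattice points). If such a minimizer $\Lambda^*$ had $|P_{\Lambda^*}\cap\Z^2|>k+1$, delete a vertex $p$ and take the convex hull $\Lambda'$ of the remaining lattice points of $\Lambda^*$: then $\Lambda'\subsetneq\Lambda^*$, so $\ell(\Lambda')<\ell(\Lambda^*)$, while $|P_{\Lambda'}\cap\Z^2|=|P_{\Lambda^*}\cap\Z^2|-1\ge k+1$, contradicting minimality. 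So the minimizer has exactly $k+1$ lattice points, and the two minima agree. The main obstacle is the middle paragraph: extracting from \cite{t3} the precise interaction of the $U$ map with the count $|P_\Lambda\cap\Z^2|$ and with the action filtration — in particular that a single well-chosen polygon-class already achieves $U^k\sigma=[\emptyset]$ at the minimal action — and bookkeeping the hyperbolic labels $h_v$, which change the ECH grading but affect neither the underlying polygon nor its perimeter, so that the action-minimization genuinely reduces to counting lattice points of polygons.
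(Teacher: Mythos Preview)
Your upper bound and your combinatorial lemma $\min_{=k+1}=\min_{\ge k+1}$ are both correct and match the paper's Step~2: the all-`$e$' generator $\alpha_0$ on a length-minimizing polygon $\Lambda_0$ with $k+1$ lattice points is a cycle, the explicit computation of $U$ in \cite{t3} gives $U^k[\alpha_0]=[\emptyset]$, and hence $c_k\le\ell(\Lambda_0)$.

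The lower bound, however, has a genuine gap. You want an action-minimizing basis ``consisting of homology classes of generators ordered by $\ell(\Lambda)$'', but there is no reason such a basis exists: ECH cycles for $T^3$ are generally sums of labeled polygons, and the differential (``round a corner, lose an $h$'') changes the underlying polygon, so individual generators are rarely cycles and homology classes are not naturally indexed by polygons. Even granting $\sigma_j=[\alpha_j]$, Lemma~\ref{lem:amb}(b) only yields $c_\sigma\ge c_{\sigma_j}$, and a priori $c_{\sigma_j}\le\mc{A}(\alpha_j)=\ell(\Lambda_{\alpha_j})$ rather than $\ge$; so the chain of inequalities $c_\sigma\ge\ell(\Lambda)$ is not justified. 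Your heuristic ``$U$ deletes a lattice point'' is also only literally true on all-`$e$' generators.

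The paper's fix is to replace your lattice-point filtration by the absolute $\Z$-grading, via the formula $I(\alpha)=2(|P_{\Lambda_\alpha}\cap\Z^2|-1)-\#h(\alpha)$ from \cite{t3}. Two structural facts then do the work: $ECH_*(T^3,\lambda,0)\simeq(\Z/2)^3$ for $*\ge 0$ and vanishes otherwise, and $U$ is an isomorphism in grading $\ge 2$. These force any $\sigma$ with $U^k\sigma=[\emptyset]$ to have grading-$2k$ component exactly $[\alpha_0]$ and no higher components, so Remark~\ref{rem:careful} gives $c_\sigma\ge c_{[\alpha_0]}$. Finally, every chain-level generator in grading $2k$ has $|P_\Lambda\cap\Z^2|\ge k+1$, hence action $\ge\ell(\Lambda_0)$, so $c_{[\alpha_0]}\ge\ell(\Lambda_0)$. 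This is precisely the rigorous version of what you were reaching for; the grading, not an action-minimizing basis, is the correct organizing tool here.
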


\begin{proof}
The proof has three steps.

{\em Step 1.\/} We first review what we need to know about the ECH of
$T^3$.  The relative grading on $ECH_*(T^3,\lambda,0)$ has a canonical
refinement to an absolute $\Z$-grading in which the empty set has
grading $0$.  With this convention, we have (by \cite{t3}, or
using the isomorphism \eqref{eqn:echswf} and \cite[Prop.\ 3.10.1]{km})
\begin{equation}
\label{eqn:echt3}
ECH_*(T^3,\lambda,0) \simeq \left\{\begin{array}{cl}(\Z/2)^3, &
    *\ge 0,\\
    0, & \mbox{otherwise.}\end{array}\right.
\end{equation}
In addition, the map
\[
U: ECH_*(T^3,\lambda,0) \longrightarrow ECH_{*-2}(T^3,\lambda,0)
\]
is an isomorphism whenever $*\ge 2$.  Finally, the contact invariant
$[\emptyset]$ is nonzero (by \cite{t3}, or because $(T^3,\lambda)$ is
the boundary of a Liouville domain).

We also need to know a bit about the ECH chain complex.  The Reeb
vector field is given by
\[
R = \cos\theta\frac{\partial}{\partial x} +
\sin\theta\frac{\partial}{\partial y}.
\]
It follows that for every pair of relatively prime integers $(m,n)$
there is a Morse-Bott circle of embedded Reeb orbits $\mc{O}_{m,n}$
sweeping out $\{\theta\}\times(\R^2/\Z^2)$ where
\begin{equation}
\label{eqn:theta}
\cos\theta=\frac{m}{\sqrt{m^2+n^2}},\quad\quad
\sin\theta=\frac{n}{\sqrt{m^2+n^2}}.
\end{equation}
Each Reeb orbit $\gamma\in\mc{O}_{m,n}$ has symplectic action
\begin{equation}
\label{eqn:aomn}
\mc{A}(\gamma) = \sqrt{m^2+n^2}.
\end{equation}
There are no other embedded Reeb orbits.

Fix $L\in\R$.  For any $\epsilon>0$, we can perturb the contact form
$\lambda$ to $f\lambda$ where $f:Y\to[1-\epsilon,1]$, such that each
Morse-Bott circle $\mc{O}_{m,n}$ with $\sqrt{m^2+n^2}<L$ splits into
an elliptic orbit $e_{m,n}$ and a hyperbolic orbit $h_{m,n}$, and
these are the only embedded Reeb orbits with action less than $L$.  As
in \cite[\S11.3]{t3}, a generator $\alpha$ of the ECH chain complex
for $f\lambda$ with action less than $L$ and with $\Gamma=0$ then
corresponds to a convex lattice polygon $\Lambda_\alpha$, modulo
translation, in which each edge is labeled `$e$' or `$h$'.  Note here
that $2$-gons and $0$-gons are allowed, with the latter corresponding
to the empty set of Reeb orbits.

By \eqref{eqn:aomn}, the action of a generator $\alpha$ as above is
given by
\begin{equation}
\label{eqn:at3}
\mc{A}(\alpha) = \ell(\Lambda_\alpha) - O(\epsilon).
\end{equation}
Furthermore, it is shown in \cite[\S11.3]{t3} that with the above
grading conventions, the grading of the generator $\alpha$ is given by
\begin{equation}
\label{eqn:T3grading}
I(\alpha) = 2(|P_{\Lambda_\alpha}\cap\Z^2|-1) - \#h(\alpha),
\end{equation}
where $\#h(\alpha)$ denotes the number of edges of $\Lambda_\alpha$
that are labeled `$h$'.

{\em Step 2.\/} We now prove that the left hand side of
\eqref{eqn:T3spectrum} is less than or equal to the right hand side.

Fix a nonnegative integer $k$.  Let $\Lambda_0$ be a length-minimizing
convex polygon with $|P_{\Lambda_0}\cap\Z^2|=k+1$.  Let $\alpha_0$
denote the ECH generator consisting of the polygon $\Lambda_0$ with
all edges labeled `$e$'.  (Assume that $L$ above is chosen
sufficiently large with respect to $k$ so that this is defined.)  The
differential on the ECH chain complex in action less than $L$ for
suitable perturbation function $f$ and almost complex structure $J$ is
computed in \cite{t3}: roughly speaking, the differential of a
generator is the sum over all ways of ``rounding a corner'' and
``locally losing one `$h$'''.  Since the generator $\alpha_0$ has no
`$h$' labels, it follows immediately that $\partial\alpha_0=0$.  In
addition, it follows from the computation of the $U$ map in
\cite[\S12.1.4]{t3} that the chain map $U$ applied to a generator with
all edges labeled `$e$' is obtained by rounding a distinguished corner
(depending on the choice of point $z\in Y$ used to define the chain
map $U$) and leaving all edges labeled `$e$'.  It follows that
$U^k\alpha_0=\emptyset$.  Thus $[\alpha_0]$ is a class in $ECH$ with
$U^k[\alpha_0]=[\emptyset]$, so
\[
c_k(T^3,f\lambda) \le \mc{A}(\alpha_0) =
\ell(\Lambda_0)-O(\epsilon).
\]
Taking $\epsilon\to0$ proves the desired inequality.

{\em Step 3.\/} We now prove that the left hand side of
\eqref{eqn:T3spectrum} is greater than or equal to the right hand
side.

Let $\sigma\in ECH(T^3,f\lambda,0)$ be a class with
$U^k\sigma=[\emptyset]$.  Since $U$ is an isomorphism in grading $\ge
2$, it follows that $\sigma=[\alpha_0]+\sigma'$ where $\sigma'$ is a
sum of classes of grading less than $2k$.  Thus by Remark~\ref{rem:careful},
\begin{equation}
\label{eqn:pi}
c_\sigma(T^3,f\lambda)
=\max(c_{[\alpha_0]}(T^3,f\lambda),c_{\sigma'}(T^3,f\lambda)) \ge
c_{[\alpha_0]}(T^3,f\lambda).
\end{equation}

Next we observe that
\begin{description}
\item{(*)} $\ell(\Lambda_0)$ is (up to $O(\epsilon)$ error) the
  minimum of $\mc{A}(\alpha)$ where $\alpha$ is a generator with
  $\Gamma=0$ and $I(\alpha)= 2k$.
\end{description}
This is because by \eqref{eqn:at3}, the above minimum of
$\mc{A}(\alpha)$ is (up to $O(\epsilon)$ error) the minimum of
$\mc{\ell}(\Lambda_\alpha)$ where $\alpha$ is a generator with
$\Gamma=0$ and $I(\alpha)= 2k$.  But it follows immediately from
\eqref{eqn:T3grading} that the latter minimum is realized by a
generator $\alpha$ in which all edges of $\Lambda_\alpha$ are labeled
`$e$' and $|P_{\Lambda_\alpha}\cap\Z^2|=k+1$.

It follows from (*) that
\[
c_{[\alpha_0]}(T^3,f\lambda) \ge \ell(\Lambda_0) -
O(\epsilon).
\]
Combining with \eqref{eqn:pi} and taking $\epsilon\to 0$ proves the
desired inequality.
\end{proof}

\begin{remark}
  In principle one could compute the full ECH spectrum of $T^3$ from
  \cite[Prop.\ 8.3]{t3}, although this is not so simple.  The latter
  proposition semi-explicitly describes a basis for the ECH consisting
  of elements $p_k,u_k,v_k$ of grading $2k$ and $s_k,t_k,w_k$ of
  grading $2k+1$ for each nonnegative integer $k$.  Here $p_k$ is the
  unique class of grading $2k$ with $U^kp_k=[\emptyset]$.  In
  particular, it follows from this description that in the notation of
  Definition~\ref{def:csigma},
\[
c_{w_k} >
 c_{u_k}=c_{v_k}=c_{s_k}=c_{t_k} > c_{p_{k-1}}.
\]
In addition it follows from the computation of the $U$ map in
\cite[Lem.\ 8.4]{t3} that $c_{p_k}> c_{p_{k-1}}$, $c_{u_k}>
c_{u_{k-1}}$, and so forth.  The beginning of the full ECH spectrum is
$c_{p_0}=0$, $c_{p_1}=c_{u_0}=c_{v_0}=c_{s_0}=c_{t_0}=2$,
$c_{p_2}=c_{u_1}=c_{v_1}=c_{s_1}=c_{t_1}=c_{w_0}=2+\sqrt{2}$,
$c_{p_3}=4$, $c_{u_2}=c_{v_2}=c_{s_2}=c_{t_2}=c_{w_1}=2+2\sqrt{2}$.
\end{remark}

\subsection{Distinguished ECH spectrum of some nonstandard 3-tori}
\label{sec:dualnorm}

We now prove Theorem~\ref{thm:dual}, computing the distinguished ECH
capacities of the examples $T_{\|\cdot\|^*}$ defined in
\S\ref{sec:dualintro}.  Note that this generalizes
Proposition~\ref{prop:standardt3}, because if
$\|\cdot\|$ is the Euclidean norm on $\R^2$, then $\lambda$ restricts
to $\partial T_{\|\cdot\|^*}$ as the standard contact form
\eqref{eqn:stdt3} on $T^3$.

\begin{proof}[Proof of Theorem~\ref{thm:dual}.]
We may assume without loss of generality that the norm $\|\cdot\|$ is
smooth.  This follows from Proposition~\ref{prop:extend}, because an
arbitrary norm can be approximated from above and below by smooth
norms, and for a given positive integer $k$ the right hand side of
\eqref{eqn:dual} depends continuously on the norm.

Since the norm $\|\cdot\|$ is smooth, $T_{\|\cdot\|^*}$ is a Liouville
domain.  We now follow the proof of Proposition~\ref{prop:standardt3}
with appropriate modifications.

To start we compute the Reeb vector field of
$\lambda=\sum_{i=1}^2p_idq_i$ on $\partial T_{\|\cdot\|^*}$.  Let $B$
denote the unit ball of the dual norm $\|\cdot\|^*$; observe that
$\partial B$ is a smooth convex curve in $(\R^2)^*$.  Identify
$(\R^2)^*=\R^2$ using the usual coordinates $p_1,p_2$.  Suppose
$(p_1,p_2)\in \partial B$.  There is a unique $\theta\in\R/2\pi\Z$
such that the outward unit normal vector to $\partial B$ at
$(p_1,p_2)$ (with respect to the Euclidean metric) is given by
$(\cos\theta,\sin\theta)$.  The Reeb vector field at
$(q_1,q_2,p_1,p_2)$ is then
\[
R = \left(p_1\cos\theta +
  p_2\sin\theta\right)^{-1}\left(\cos\theta\frac{\partial}{\partial q_1} +
  \sin\theta\frac{\partial}{\partial q_2}\right).
\]
It follows that for every pair of relatively prime integers $(m,n)$
there is a Morse-Bott circle of embedded Reeb orbits $\mc{O}_{m,n}$,
sweeping out $T^2\times\{(p_1,p_2)\}$ where $(p_1,p_2)$ corresponds as
above to the unique $\theta$ satisfying \eqref{eqn:theta}.  There are
no other embedded Reeb orbits.  Each Reeb orbit
$\gamma\in\mc{O}_{m,n}$ has symplectic action
\[
\mc{A}(\gamma) = p_1m + p_2n.
\]
Now observe that since $\|\cdot\|$ is the dual norm of $\|\cdot\|^*$,
we have
\[
\left\|(m,n)\right\| = \max\left\{\langle\zeta,(m,n)\rangle\;\big|\; \zeta\in
  B\right\}.
\]
By the definition of $\theta$, this maximum is realized by
$\zeta=(p_1,p_2)$.  In conclusion, each Reeb orbit
$\gamma\in\mc{O}(m,n)$ has symplectic action
\begin{equation}
\label{eqn:dualaction}
\mc{A}(\gamma) = \left\|(m,n)\right\|.
\end{equation}

The rest of the proof is now the same as the proof of
Proposition~\ref{prop:standardt3}, with equation \eqref{eqn:aomn}
replaced by \eqref{eqn:dualaction}, and $\ell$ replaced by
$\ell_{\|\cdot\|}$.
\end{proof}

\section{The polydisk}
\label{sec:pd}

\subsection{The ECH capacities of a polydisk}

We now prove Theorem~\ref{thm:polydisk} on the (distinguished) ECH
capacities of a polydisk.  One can calculate the ECH capacities of a
polydisk by understanding the ECH chain complex of an appropriately
smoothed polydisk, similarly to the calculations in \cite{t3} for
$T^3$ as outlined in \S\ref{sec:dest3}.  However this is a long story,
and we will instead take a shortcut using Theorems~\ref{thm:main} and
\ref{thm:dual}.

\begin{proof}[Proof of Theorem~\ref{thm:polydisk}.]
The proof has two steps.

{\em Step 1.\/}
Define a norm $\|\cdot\|$ on $\R^2$ by
\begin{equation}
\label{eqn:manhattan}
\|(q_1,q_2)\| = \frac{a|q_1|}{2}+\frac{b|q_2|}{2}.
\end{equation}
The dual norm is then
\[
\|(p_1,p_2)\|^* = \max\left(\frac{2|p_1|}{a},\frac{2|p_2|}{b}\right),
\]
so that
\[
T_{\|\cdot\|^*} = \left\{(q_1,q_2,p_1,p_2)\in T^*T^2 \;\big|\; |p_1|
  \le a/2, \; |p_2| \le b/2\right\}.
\]
Denote this by $T(a,b)$.

Observe now that for any $\epsilon>0$, there is a symplectic embedding
$P(a,b)\to T(a+\epsilon,b+\epsilon)$ defined by
\[
(z_1,z_2) \longmapsto (\phi_1(z_1),\phi_2(z_2)),
\]
where $\phi_1=(p_1,q_1)$ is an area-preserving embedding of the disc of area
$a$ into the cylinder $[-(a+\epsilon)/2,(a+\epsilon)/2]\times\R/\Z$,
and $\phi_2=(p_2,q_2)$ is an area-preserving embedding of the disc of area
$b$ into the cylinder $[-(b+\epsilon)/2,(b+\epsilon)/2]\times\R/\Z$.
There is also a symplectic embedding $T(a-\epsilon,b-\epsilon)\to
P(a,b)$ defined by
\[
(q_1,q_2,p_1,p_2) \longmapsto
\pi^{-1/2}\left((a/2+p_1)^{1/2}e^{2\pi i q_1},
  (b/2+p_2)^{1/2}e^{2\pi i q_2}\right).
\]

Consequently, for any given $k$, applying Theorem~\ref{thm:main} and
taking $\epsilon\to 0$ shows that
\[
c_k(P(a,b)) = c_k(T(a,b)).
\]
So by Theorem~\ref{thm:dual}, we need to show that
\begin{equation}
\label{eqn:minmin}
\min\left\{am+bn\;\big|\;
  (m+1)(n+1)\ge k+1\right\}
=
\min\left\{\ell_{\|\cdot\|}(\Lambda) \,\big|\, |P_\Lambda\cap\Z^2|=k+1
\right\},
\end{equation}
where in the first minimum $(m,n)\in\N^2$, and in the second minimum
$\Lambda$ is a convex polygon in $\R^2$ with vertices in $\Z^2$.

{\em Step 2.\/} We now prove \eqref{eqn:minmin}.
Given a convex polygon $\Lambda$ in $\R^2$ with vertices in $\Z^2$,
let $m$ denote the horizontal displacement between the rightmost and
leftmost vertices, and let $n$ denote the vertical displacement
between the top and bottom vertices. Then $\Lambda$ is contained in a
rectangle of side lengths $m$ and $n$, so
\[
|P_\Lambda\cap\Z^2|\le (m+1)(n+1).
\]
On the other hand it follows from \eqref{eqn:manhattan} that
\[
\ell_{\|\cdot\|}(\Lambda) = am+bn.
\]
Hence the left hand side of \eqref{eqn:minmin} is less than or equal
to the right hand side.  But the reverse inequality also holds,
because if $k+1\le(m+1)(n+1)$, then inside a rectangle of side lengths
$m$ and $n$ one can find a convex polygon $\Lambda$ with
$|P_\Lambda\cap\Z^2|=k+1$.
\end{proof}

\subsection{Obstructions to embedding polydisks into balls}
\label{sec:simplecalculations}

Let us now try to more explicitly understand the bound \eqref{eqn:po}
(which we have now justified) for the function $g$ defined in
\S\ref{sec:pb} that measures the obstruction to symplectically
embedding a polydisk into a ball.
The bound \eqref{eqn:po} can be written as $g(a)\ge \sup_{d=1,2,\ldots}
g_d(a)$, where
\[
g_d(a) \eqdef \min\left\{\frac{am+n}{d} \;\bigg|\; (m,n)\in\N^2,\;\;
  (m+1)(n+1)\ge \frac{(d+1)(d+2)}{2}\right\}.
\]
Given $d$, one can compute the function $g_d$ as follows.  Let
$\Lambda_d$ denote the boundary of the convex hull of the set of
lattice points $(m,n)\in\N^2$ with $(m+1)(n+1)\ge(d+1)(d+2)/2$.  Then
$g_d(a)=(am+n)/d$, where $(m,n)$ is a (usually unique) vertex of the
polygonal path $\Lambda_d$ incident to edges of slope less than or
equal to $-a$ and slope greater than or equal to $-a$.  Using this
observation, we can now give the:

\begin{proof}[Proof of Proposition~\ref{prop:pb}.]
First consider $d=1$.  The path $\Lambda_1$ has vertices $(0,2)$,
$(1,1)$, and $(2,0)$.  Since the vertex $(0,2)$ is incident to edges
of slope $-1$ and $-\infty$, the above discussion shows that
\[
g_1(a) = 2, \quad\quad a\ge 1.
\]
This proves the first line of \eqref{eqn:ga}.  To prove the rest of
\eqref{eqn:ga}, take $d=6$.  The path $\Lambda_6$ has vertices
$(0,27)$, $(1,13)$, $(2,9)$, $(3,6)$, $(4,5)$, $(5,4)$, $(6,3)$,
$(9,2)$, $(13,1)$, and $(27,0)$.  Since the vertex $(3,6)$ is incident to
edges of slope $-1$ and $-3$, we get
\[
g_6(a) = \frac{3a+6}{6}, \quad 1\le a\le 3.
\]
This implies the second line of \eqref{eqn:ga}.  And since the vertex
$(2,9)$ is incident to edges of slope $-3$ and $-4$, we obtain
\[
g_6(a) = \frac{2a+9}{6}, \quad 3\le a\le 4.
\]
This gives the last line of \eqref{eqn:ga}.
\end{proof}

\section{Volume and quantitative ECH}
\label{sec:volume}

We now discuss and present evidence for
Conjecture~\ref{conj:Liouville} and some variants, relating the
asymptotics of quantitative ECH to volume.

\subsection{Volume conjecture for the distinguished ECH spectrum}

If $(Y,\lambda)$ is a closed contact 3-manifold, define
\[
\op{vol}(Y,\lambda) \eqdef \int_Y\lambda\wedge d\lambda.
\]
Conjecture~\ref{conj:Liouville} is then a special case of the following:

\begin{conjecture}
\label{conj:distinguishedvolume}
Let $(Y,\lambda)$ be a closed contact 3-manifold with nonvanishing ECH
contact invariant.  Suppose that $c_k(Y,\lambda)<\infty$ for all $k$.
Then
\[
\lim_{k\to\infty}\frac{c_k(Y,\lambda)^2}{k} = 2
  \op{vol}(Y,\lambda).
\]
\end{conjecture}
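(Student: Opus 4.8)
The plan is to establish the conjecture by combining a lower bound and an upper bound on $c_k(Y,\lambda)^2/k$, both converging to $2\op{vol}(Y,\lambda)$ as $k\to\infty$. First I would handle the cases where the conjecture is already accessible: by Proposition~\ref{prop:ellipsoidCapacity} and Remark~\ref{rem:ellipsoidVolume} it holds for ellipsoids (in the guise of Conjecture~\ref{conj:Liouville}), and a direct lattice-point count using Theorem~\ref{thm:polydisk} settles polydisks; similarly, Theorem~\ref{thm:dual} together with the asymptotics of the lattice-point-counting function for convex polygons (the number of interior lattice points of a polygon of a given perimeter in $\|\cdot\|$ grows quadratically, with leading coefficient governed by the area of the unit ball of $\|\cdot\|^*$, i.e.\ by $\op{vol}(T_{\|\cdot\|^*})$) settles the cotangent-bundle examples. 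The core of the section is to present these verifications as evidence, together with the two stability statements below.

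Next I would prove that the conjecture is closed under the two operations for which we have computed ECH spectra: disjoint union and passing to subsets. For disjoint unions, Proposition~\ref{prop:dsdu} expresses $c_k(\coprod_i(Y_i,\lambda_i))$ as a max over partitions $\sum k_i = k$ of $\sum_i c_{k_i}(Y_i,\lambda_i)$; assuming $c_{k_i}(Y_i,\lambda_i)^2/k_i \to 2\op{vol}(Y_i,\lambda_i)$ for each $i$, one checks by a Lagrange-multiplier / concavity argument that the optimal partition has $k_i$ asymptotically proportional to $\op{vol}(Y_i,\lambda_i)$, and that $c_k(\coprod_i Y_i,\lambda)^2/k \to 2\sum_i\op{vol}(Y_i,\lambda_i) = 2\op{vol}(\coprod_i Y_i,\lambda)$; this is a routine but slightly fiddly optimization that I would sketch rather than grind through. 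For subsets, if $(X,\omega)$ is a subset of a symplectic four-manifold exhausted from the inside by Liouville domains $X_-$ satisfying the conjecture, then monotonicity (Theorem~\ref{thm:main} / Proposition~\ref{prop:extend}) gives $c_k(X_-,\omega)\le c_k(X,\omega)$, and for the reverse direction one uses that $\op{vol}(X_-,\omega)\to\op{vol}(X,\omega)$ together with a diagonal argument over an exhausting sequence; the subtlety is interchanging the limit in $k$ with the limit over the exhaustion, which I would control using monotonicity of $c_k$ in both arguments.

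The genuinely hard part, and the reason this remains a conjecture rather than a theorem, is the general lower bound: showing $\liminf_{k\to\infty} c_k(Y,\lambda)^2/k \ge 2\op{vol}(Y,\lambda)$ for an arbitrary contact 3-manifold with nonvanishing contact invariant. The only general tool available is Taubes's isomorphism \eqref{eqn:echswf} together with the structure of $ECH$ as a module over the $U$-map; one would want to say that a class $\sigma$ with $U^k\sigma=[\emptyset]$ "sees" all the Reeb dynamics up to action roughly $\sqrt{2k\cdot\op{vol}}$, which morally follows from the asymptotics of the $ECH$ spectrum being controlled by the volume — but this is precisely the content being conjectured, so the argument is circular unless one inputs new analytic information (e.g.\ a volume-type bound on Seiberg-Witten solutions, or Weyl-law asymptotics for the $ECH$ spectrum of the kind later established by Cristofaro-Gardiner--Hutchings--Ramos). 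Accordingly, in this section I would state Conjecture~\ref{conj:distinguishedvolume}, verify it in the families above, prove the two stability results, and explain the connection to a quantitative refinement of the Weinstein conjecture (Remark~\ref{rem:qw}), without claiming a proof in general.
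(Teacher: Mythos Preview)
Your overall plan---verify the conjecture in the explicit families (ellipsoid boundaries, the $T^3$ and $T_{\|\cdot\|^*}$ examples, polydisk-type boundaries), prove stability under disjoint union via Proposition~\ref{prop:dsdu}, and explain why the general case remains open---matches the paper's approach. The disjoint-union argument you sketch is exactly Proposition~\ref{prop:somewhere}.

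However, you have the direction of difficulty reversed. You identify the \emph{lower} bound $\liminf_k c_k(Y,\lambda)^2/k \ge 2\op{vol}(Y,\lambda)$ as ``the genuinely hard part''. In fact the paper proves this inequality unconditionally for any four-dimensional Liouville domain: one packs $(X,\omega)$ from the inside, up to volume~$\epsilon$, by a disjoint union of polydisks $P$, applies Theorem~\ref{thm:main} to get $c_k(P)\le c_k(X)$, and uses that Conjecture~\ref{conj:Liouville} holds for disjoint unions of polydisks (by your own stability step) to conclude $\liminf_k c_k(X)^2/k \ge 4(\op{vol}(X)-\epsilon)$. This is the content of the unnumbered proposition in \S\ref{sec:volume}, part~(a). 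What is genuinely open is the \emph{upper} bound $\limsup_k c_k^2/k \le 2\op{vol}$: to get this one would want to embed $X$ into something whose capacities are known and whose volume is close to that of $X$, and no such embedding is available in general. The paper's part~(b) records exactly this conditional statement---if $X$ embeds into some $(X_1,\omega_1)$ for which the conjecture holds, then it holds for $X$---rather than the ``exhaustion from inside'' you describe, which would again only give the lower bound.

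So your writeup should swap the roles: present the packing argument as establishing $\liminf\ge$ in full generality for Liouville domains, and flag the $\limsup\le$ direction as the missing ingredient (later supplied by the Weyl-law asymptotics you allude to).
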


By Remark~\ref{rem:ellipsoidVolume} and Proposition~\ref{prop:dss3},
this conjecture holds for ellipsoids.  Here are some more examples:

\begin{example}
\label{ex:t3volume}
  Consider $T^3$ as in \eqref{eqn:t3} with the standard contact form
  $\lambda$ in \eqref{eqn:stdt3}.  Let $\Lambda$ be a convex polygon
  as in \eqref{eqn:T3spectrum}.  If $A(\Lambda)$ denotes the area
  enclosed by $\Lambda$, then
\[
|P_\Lambda\cap\Z^2| = A(\Lambda) + O(\ell(\Lambda)).
\]
It then follows from \eqref{eqn:T3spectrum} and the isoperimetric
inequality
\[
\ell(\Lambda)^2 \ge 4\pi A(\Lambda)
\]
that
\[
\liminf_{k\to\infty}\frac{c_k(T^3,\lambda)^2}{k} \ge 4\pi.
\]
On the other hand, approximating a circle with polygons shows that if
$k$ is large, then we can find a polygon $\Lambda$ as in
\eqref{eqn:T3spectrum} with
\[
\ell(\Lambda)^2 \le 4\pi A(\Lambda) + O(\ell(\Lambda)),
\]
so in fact
\[
\lim_{k\to\infty}\frac{c_k(T^3,\lambda)^2}{k} = 4\pi.
\]
Since $\op{vol}(T^3)=2\pi$, Conjecture~\ref{conj:distinguishedvolume} is
confirmed in this case.
\end{example}

\begin{example}
  More generally, let $\|\cdot\|$ be a smooth norm on $\R^2$, let $B$
  denote the unit ball in the dual norm $\|\cdot\|^*$, and consider
  the Liouville domain $T_{\|\cdot\|^*}$ from \S\ref{sec:dualnorm}.
  We have $\op{vol}\left(T_{\|\cdot\|^*}\right)=A(B)$, where $A(B)$
  denotes the area of $B$ (with respect to the Euclidean metric).  So
  it follows from Theorem~\ref{thm:dual} that
  Conjecture~\ref{conj:Liouville} in this case is equivalent to a
  sharp isoperimetric inequality
\begin{equation}
\label{eqn:wulff}
\ell_{\|\cdot\|}(\Lambda)^2\ge 4A(B)A(\Lambda)
\end{equation}
for a smooth convex curve $\Lambda$.  Now \eqref{eqn:wulff} holds
because if $A(\Lambda)$ is fixed, then $\ell_{\|\cdot\|}(\Lambda)$ is
minimized when $\Lambda$ is a scaling of a $90^\circ$ rotation of
$\partial B$, see \cite{bm,wulff}; and one can check directly that in
this case equality holds in \eqref{eqn:wulff}.
\end{example}

\begin{proposition}
\label{prop:somewhere}
If Conjecture~\ref{conj:distinguishedvolume} holds for closed contact
three-manifolds $(Y_i,\lambda_i)$ with nonvanishing contact invariant
for $i=1,\ldots,n$, then it also holds for $(Y,\lambda)\eqdef
\coprod_{i=1}^n(Y_i,\lambda_i)$.
\end{proposition}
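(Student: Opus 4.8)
The plan is to reduce the assertion to an elementary asymptotic optimization, using the disjoint‑union formula of Proposition~\ref{prop:dsdu} together with the obvious additivity $\op{vol}(Y,\lambda)=\sum_{i=1}^n\op{vol}(Y_i,\lambda_i)$. First I would observe that Conjecture~\ref{conj:distinguishedvolume} is even applicable to $(Y,\lambda)$: by \eqref{eqn:tensorchain}--\eqref{eqn:tensor} the ECH contact invariant of $\coprod_i(Y_i,\lambda_i)$ is $[\emptyset]\tensor\cdots\tensor[\emptyset]\neq 0$, and $c_k(Y,\lambda)$ is a finite maximum of finite quantities by Proposition~\ref{prop:dsdu}, hence finite. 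Write $v_i\eqdef\op{vol}(Y_i,\lambda_i)$ and $V\eqdef\sum_{i=1}^nv_i$. The hypothesis is that $c_m(Y_i,\lambda_i)^2/m\to 2v_i$ as $m\to\infty$, and Proposition~\ref{prop:dsdu} gives
\[
c_k(Y,\lambda)=\max\left\{\sum_{i=1}^nc_{k_i}(Y_i,\lambda_i)\;\bigg|\;\sum_{i=1}^nk_i=k\right\}.
\]
I would then establish $\limsup_kc_k(Y,\lambda)^2/k\le 2V$ and $\liminf_kc_k(Y,\lambda)^2/k\ge 2V$ separately; heuristically the optimal allocation is $k_i$ proportional to $v_i$, for which a Lagrange‑multiplier computation predicts $c_k(Y,\lambda)\sim\sqrt{2kV}$.

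For the upper bound, fix $\epsilon>0$. Since there are only finitely many pieces, the per‑piece asymptotics (for large $m$) together with the monotonicity $c_m(Y_i,\lambda_i)\le c_N(Y_i,\lambda_i)$ (for small $m$) yield an $N$ and a constant $M$ with $c_m(Y_i,\lambda_i)\le\sqrt{(2v_i+\epsilon)m}+M$ for all $i$ and all $m\ge 0$. Then for any tuple $(k_i)$ with $\sum_ik_i=k$, the Cauchy--Schwarz inequality gives
\[
\sum_{i=1}^nc_{k_i}(Y_i,\lambda_i)\le\sum_{i=1}^n\sqrt{(2v_i+\epsilon)k_i}+nM\le\sqrt{2V+n\epsilon}\,\sqrt{k}+nM,
\]
so $c_k(Y,\lambda)\le\sqrt{2V+n\epsilon}\,\sqrt{k}+nM$, hence $\limsup_kc_k(Y,\lambda)^2/k\le 2V+n\epsilon$; now let $\epsilon\to 0$.

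For the lower bound I may assume $V>0$ (otherwise there is nothing to prove). Fix $\epsilon>0$ with $\epsilon<2\min\{v_i\mid v_i>0\}$, and choose $N$ so that $c_m(Y_i,\lambda_i)\ge\sqrt{(2v_i-\epsilon)m}$ for all $i$ with $v_i>0$ and all $m\ge N$. Given $k$, set $k_i=\floor{v_ik/V}$ when $v_i>0$ and $k_i=0$ when $v_i=0$, and add the leftover $k-\sum_ik_i\le n$ onto a fixed index with $v_i>0$ (which only increases the corresponding $c_{k_i}$, by monotonicity). For $k$ large, every such $k_i$ is $\ge N$ and satisfies $k_i/k\to v_i/V$, so
\[
c_k(Y,\lambda)\ge\sum_{i:\,v_i>0}c_{k_i}(Y_i,\lambda_i)\ge\sum_{i:\,v_i>0}\sqrt{(2v_i-\epsilon)k_i};
\]
dividing by $\sqrt{k}$ and letting $k\to\infty$ gives $\liminf_kc_k(Y,\lambda)/\sqrt{k}\ge\sum_i\sqrt{(2v_i-\epsilon)v_i/V}$, which tends to $\sqrt{2/V}\sum_iv_i=\sqrt{2V}$ as $\epsilon\to 0$. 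Hence $\liminf_kc_k(Y,\lambda)^2/k\ge 2V$, and combining with the upper bound completes the argument.

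The only genuinely delicate point is turning the transparent Lagrange‑multiplier heuristic into a rigorous argument: the per‑piece volume asymptotics are limits rather than uniform estimates, which forces the additive error terms $M$ above, and one must also cope with the integrality of the $k_i$. Neither issue is serious---Cauchy--Schwarz handles the upper bound outright, and the lower bound only needs the explicit near‑optimal allocation $k_i\approx v_ik/V$---so I expect no obstacle beyond this bookkeeping, and no input beyond Proposition~\ref{prop:dsdu}.
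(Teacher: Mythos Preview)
Your proposal is correct and follows essentially the same route as the paper: both invoke Proposition~\ref{prop:dsdu} and then solve the resulting optimization, with the optimal allocation $k_i\approx v_ik/V$. Your version is more careful about the bookkeeping---using Cauchy--Schwarz for the upper bound and an explicit near-optimal choice for the lower bound---whereas the paper simply asserts the Lagrange-multiplier answer without explicitly justifying the interchange of limit and maximum.
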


\begin{proof}
  By Proposition~\ref{prop:dsdu}, we can assume that
  $c_k(Y_i,\lambda_i)<\infty$ for all $i$ and $k$, and we have
\[
\lim_{k\to\infty}\frac{c_k(Y,\lambda)}{\sqrt{2k}} =
\lim_{k\to\infty}\frac{1}{\sqrt{2k}}
\max_{k_1+\cdots+k_n=k}\sum_{i=1}^n\sqrt{2k_i\op{vol}(Y_i,\lambda_i)},
\]
provided that the limit on the right exists.  If one drops the
integrality requirement on $k_i$, then the maximum on the right is
attained when
\[
k_i = \frac{k \op{vol}(Y_i,\lambda_i)}{\op{vol}(Y,\lambda)}.
\]
We then obtain
\[
\lim_{k\to\infty}\frac{c_k(Y,\lambda)}{\sqrt{2k}} =
\lim_{k\to\infty}
\sum_{i=1}^n(\op{vol}(Y,\lambda))^{-1/2}\op{vol}(Y_i,\lambda_i)
= \sqrt{\op{vol}(Y,\lambda)}
\]
as required.
\end{proof}

There is also (limited) experimental support for a related conjecture:

\begin{conjecture}
\label{conj:qw}
If $(Y,\lambda)$ satisfies the assumptions of
  Conjecture~\ref{conj:distinguishedvolume}, then $ c_k(Y,\lambda) <
  \sqrt{2k\op{vol}(Y,\lambda)} $ for all $k>0$.
\end{conjecture}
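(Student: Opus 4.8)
The plan is to verify Conjecture~\ref{conj:qw} for the contact manifolds whose distinguished ECH spectrum has been computed above --- boundaries of ellipsoids, the standard and twisted three-tori $\partial T_{\|\cdot\|^*}$, and disjoint unions of these --- and to treat the general statement as an open problem. In each family one first reduces to nondegenerate $\lambda$ by the approximation arguments of \S\ref{sec:full}, and disposes of small $k$ by direct computation, leaving only an asymptotic estimate. For $E(a,b)$, since $\op{vol}(\partial E(a,b),\lambda)=ab$ and $c_k(E(a,b))=(a,b)_{k+1}$ by Proposition~\ref{prop:ellipsoid}, writing $L=c_k(E(a,b))$ the claim becomes the arithmetic inequality
\[
N(L)\eqdef\left|\left\{(i,j)\in\N^2 \,\big|\, ai+bj\le L\right\}\right|>\frac{L^2}{2ab}+1,
\]
valid whenever $N(L)\ge 2$. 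This follows from the recursion $N(L)=\floor{L/a}+\floor{L/b}+1+N(L-a-b)$ (separating lattice points on the axes from the rest) together with the elementary bound $N(M)\ge M^2/(2ab)$ for $M\ge 0$, obtained by observing that the unit squares with lower-left corners the points counted by $N(M)$ cover the triangle $\{x,y\ge0,\ ax+by\le M\}$ of area $M^2/(2ab)$; the constant $(a+b)^2/2ab\ge 2$ that appears on unwinding the recursion supplies the extra $+1$.

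The tori are more delicate. By Theorem~\ref{thm:dual}, $\op{vol}(\partial T_{\|\cdot\|^*},\lambda)=2A(B)$ with $B$ the unit ball of $\|\cdot\|^*$, and $c_k=\min\{\ell_{\|\cdot\|}(\Lambda)\mid|P_\Lambda\cap\Z^2|=k+1\}$, so one wants $\ell_{\|\cdot\|}(\Lambda)^2<4kA(B)$ for the minimizing $\Lambda$. The sharp isoperimetric inequality \eqref{eqn:wulff} bounds $\ell_{\|\cdot\|}$ the wrong way, so instead one builds a competitor: take a scaled $90^\circ$-rotation of $\partial B$ --- the Wulff shape, on which \eqref{eqn:wulff} is an equality --- of area slightly more than $k$, pass to the convex hull of the lattice points it contains (which only decreases the $\|\cdot\|$-length), and prune vertices down to exactly $k+1$ lattice points; the point is a lattice-geometry (Jarn\'ik-type) estimate showing this hull has $\|\cdot\|$-perimeter below that of the region by a definite power of $k$, which is exactly what converts the harmless-looking loss of $4A(B)$ from using area $k+1$ instead of $k$ into a strict inequality for all large $k$. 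The small cases (e.g.\ $c_1=2,\ c_2=2+\sqrt{2},\ c_3=4$ for $T^3$, far below $2\sqrt{k\pi}$) are immediate. For the polydisk $P(a,b)$ --- not itself a contact manifold, but $c_k(P(a,b))=c_k(T(a,b))$ for the torus $T(a,b)$ of \S\ref{sec:pd} --- Theorem~\ref{thm:polydisk} reduces the analogous inequality $c_k(P(a,b))^2<4abk$ to an estimate on the integer program $\min\{a(p-1)+b(q-1)\mid p,q\ge 1,\ pq\ge k+1\}$: choosing $p,q$ near $\sqrt{(k+1)b/a}$ and $\sqrt{(k+1)a/b}$ and tracking the rounding shows this holds for all large $k$ (the integrality loss being a negative multiple of $\sqrt{k}$ that eventually dominates the additive $O(ab)$ terms), with the remaining small $k$ checked by hand (e.g.\ $c_1(P(a,b))=\min(a,b)$).

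Closure under disjoint union is the one clean step. By Proposition~\ref{prop:dsdu} one has $c_k(\coprod_i(Y_i,\lambda_i))=\sum_ic_{k_i}(Y_i,\lambda_i)$ for the maximizing $(k_1,\dots,k_n)$ with $\sum_ik_i=k$; since $k\ge 1$, at least one $k_i\ge 1$, so applying the conjecture (strictly) to that factor, together with the Cauchy--Schwarz bound $\sum_i\sqrt{2k_iv_i}\le\sqrt{2(\sum_ik_i)(\sum_iv_i)}$ already used in the proof of Proposition~\ref{prop:somewhere} (with $v_i=\op{vol}(Y_i,\lambda_i)$), gives $c_k(\coprod_iY_i)<\sqrt{2k\sum_iv_i}$.

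The main obstacle is the general case. Theorem~\ref{thm:main} follows formally from the ECH cobordism maps, but Conjecture~\ref{conj:qw} demands a quantitative comparison of $c_k$ with volume for every finite $k$ --- in effect an ``isoperimetric inequality'' internal to ECH --- whereas the machinery of \cite{cc2} delivers monotonicity, not any volume bound, so a genuinely new analytic or geometric input seems to be needed. A related warning: one cannot hope to extend the statement to arbitrary subsets of symplectic four-manifolds via the supremum in Definition~\ref{def:extendck}, since a supremum of strict inequalities need not be strict; a robust proof therefore seems to require not merely $c_k^2<2k\op{vol}$ but a uniform gap, which the computed examples support (there the ratio $c_k^2/2k\op{vol}$ tends to $1$ only in the limit) but which one does not know how to control in general.
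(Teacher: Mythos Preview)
This statement is presented in the paper as a \emph{conjecture}, not a theorem: the paper offers no proof, only the remark that there is ``(limited) experimental support'' and an explanation (Remark~\ref{rem:qw}) of why it would be interesting. So there is no paper proof to compare your proposal against.

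Your proposal therefore goes further than the paper: you sketch verifications in the explicitly computed families (ellipsoid boundaries, the tori $\partial T_{\|\cdot\|^*}$, polydisk boundaries, and disjoint unions), and you correctly flag the general case as open, needing new input beyond the cobordism machinery of \cite{cc2}. The ellipsoid argument via the recursion $N(L)=\lfloor L/a\rfloor+\lfloor L/b\rfloor+1+N(L-a-b)$ together with the area bound $N(M)\ge M^2/(2ab)$ is sound and does yield the strict inequality (the constant $(a+b)^2/(2ab)\ge 2$ absorbs the ``$+1$'', with the borderline $a=b$ case handled directly from Corollary~\ref{cor:ball}). The disjoint-union step via Proposition~\ref{prop:dsdu} and Cauchy--Schwarz is clean. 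The torus argument is the weakest link: you correctly identify that one needs a competitor polygon beating the isoperimetric bound by a definite amount, but the Jarn\'ik-type estimate you invoke (that the convex hull of the lattice points in a Wulff shape has $\|\cdot\|$-perimeter strictly below that of the shape by a power of $k$) is asserted rather than proved, and is the genuine content of that case.

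In short: the paper does not attempt a proof, your partial verifications are a reasonable contribution of supporting evidence, and your assessment that the full conjecture requires a new idea is exactly the paper's own stance.
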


\begin{remark}
\label{rem:qw}
Conjecture~\ref{conj:qw} implies quantitative refinements of the
three-dimensional Weinstein conjecture, since by definition, if
$\lambda$ is nondegenerate, then $(Y,\lambda)$ has at least $k$
nonempty ECH generators of action at most $c_k(Y,\lambda)$.  For
example, the $k=1$ case of Conjecture~\ref{conj:qw} implies that if
$(Y,\lambda)$ satisfies the hypotheses of
Conjecture~\ref{conj:distinguishedvolume}, then $\lambda$ has a Reeb
orbit of symplectic action at most $\sqrt{2\op{vol}(Y,\lambda)}$.
\end{remark}

\subsection{Volume conjecture for Liouville domains}

We now confirm Conjecture~\ref{conj:Liouville} in some more cases.

\begin{proposition}
Let $(X_0,\omega_0)$ be a 4-dimensional Liouville domain.  Then:
\begin{description}
\item{(a)}
\begin{equation}
\label{eqn:liminf}
\liminf_{k\to \infty}\frac{c_k(X_0,\omega_0)^2}{k} \ge
4\op{vol}(X_0,\omega_0).
\end{equation}
\item{(b)} Suppose that $(X_0,\omega_0)$ can be symplectically
  embedded into a 4-dimensional Liouville domain $(X_1,\omega_1)$ such that
  $c_k(X_1,\omega_1)<\infty$ for all $k$ and
  Conjecture~\ref{conj:Liouville} holds for $(X_1,\omega_1)$.  Then
  Conjecture~\ref{conj:Liouville} holds for $(X_0,\omega_0)$.
\end{description}
\end{proposition}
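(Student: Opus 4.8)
The plan is to establish the lower bound (a) by packing $\op{int}(X_0)$ with a disjoint union of symplectic balls that fills almost all of its volume, and to establish (b) by packing the part of $X_1$ not occupied by $X_0$ with balls and then balancing the ball capacities of $X_0$ against those of this complementary region. Beyond the results already proved above, the only external input is the following standard packing fact: if $U$ is a finite-volume open subset of a symplectic four-manifold, then for every $\epsilon>0$ there is a symplectic embedding of a disjoint union of balls $\coprod_{i=1}^N B(a_i)$ into $U$ with $\sum_{i=1}^N a_i^2\ge 2(\op{vol}(U)-\epsilon)$; this follows from Darboux's theorem together with the Vitali covering theorem.

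To prove (a), fix $\epsilon>0$ and choose an embedding $\coprod_{i=1}^N B(a_i)\hookrightarrow\op{int}(X_0)$ with $\sum_i a_i^2\ge 2(\op{vol}(X_0,\omega_0)-\epsilon)$. By Proposition~\ref{prop:extend}, the extension of Proposition~\ref{prop:du} to subsets, and Corollary~\ref{cor:ball} (which gives $c_j(B(a))=da$ with $d\ge\sqrt{2j}-2$), for every positive integer $k$ and every tuple of nonnegative integers $(k_i)$ with $\sum_i k_i=k$ we have
\[
c_k(X_0,\omega_0)\ \ge\ \sum_{i=1}^N c_{k_i}(B(a_i))\ \ge\ \sum_{i=1}^N a_i\bigl(\sqrt{2k_i}-2\bigr).
\]
Choosing each $k_i$ as close as possible to $k\,a_i^2/\sum_j a_j^2$ (and then adjusting so that the $k_i$ sum to $k$), the Cauchy--Schwarz inequality gives $\sum_i a_i\sqrt{2k_i}=\sqrt{2k\sum_i a_i^2}+o(1)$ as $k\to\infty$. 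Hence $c_k(X_0,\omega_0)\ge 2\sqrt{k(\op{vol}(X_0,\omega_0)-\epsilon)}-C$ for a constant $C$ depending only on $N$ and the $a_i$, so $\liminf_{k\to\infty}c_k(X_0,\omega_0)^2/k\ge 4(\op{vol}(X_0,\omega_0)-\epsilon)$; now send $\epsilon\to 0$.

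To prove (b), write $V_i=\op{vol}(X_i,\omega_i)$ and $W=V_1-V_0\ge 0$; we may assume $V_0<V_1$, since if $V_0=V_1$ the assertion follows directly from Theorem~\ref{thm:main} and the conjecture for $X_1$ (and $V_0>0$ automatically, as $\omega_0\wedge\omega_0$ is a positive volume form on $X_0$). Let $\varphi\colon X_0\to\op{int}(X_1)$ be the given embedding. By Theorem~\ref{thm:main}, $c_k(X_0,\omega_0)\le c_k(X_1,\omega_1)<\infty$ for all $k$, so Conjecture~\ref{conj:Liouville} is applicable to $X_0$; in view of (a) it suffices to prove $\limsup_{k\to\infty}c_k(X_0,\omega_0)^2/k\le 4V_0$. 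Fix $\epsilon\in(0,W)$ and apply the packing fact to $U=\op{int}(X_1)\setminus\varphi(X_0)$, which has volume $W$, to obtain a symplectic embedding $X_0\sqcup\coprod_{i=1}^N B(a_i)\hookrightarrow\op{int}(X_1)$ with $\sum_i a_i^2\ge 2(W-\epsilon)$. Then Proposition~\ref{prop:extend} and the extension of Proposition~\ref{prop:du} give, for all integers $k\ge k_0\ge 0$,
\[
c_{k_0}(X_0,\omega_0)+\max\Bigl\{\sum_i c_{k_i}(B(a_i)) \;\Big|\; \sum_i k_i=k-k_0\Bigr\}\ \le\ c_k(X_1,\omega_1).
\]
As in (a), the maximum is $\ge 2\sqrt{(k-k_0)(W-\epsilon)}$ up to an error independent of $k$, while the conjecture for $X_1$ gives $c_k(X_1,\omega_1)=2\sqrt{kV_1}+o(\sqrt k)$. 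Taking $k=\ceil{k_0/t}$ for a fixed $t\in(0,1)$ and letting $k_0\to\infty$ yields
\[
\limsup_{k_0\to\infty}\frac{c_{k_0}(X_0,\omega_0)}{\sqrt{k_0}}\ \le\ \frac{2}{\sqrt t}\Bigl(\sqrt{V_1}-\sqrt{(1-t)(W-\epsilon)}\Bigr).
\]
Sending $\epsilon\to 0$ and minimizing the right-hand side over $t\in(0,1)$ --- an elementary calculation whose minimum is attained at $t=V_0/V_1$ and equals $2\sqrt{V_0}$ --- gives $\limsup_{k\to\infty}c_k(X_0,\omega_0)^2/k\le 4V_0$, as required.

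The step I expect to require the most care is the error analysis in (b): one must check that the bounded losses coming from the integrality of the $k_i$ and from the $O(1)$ term in Corollary~\ref{cor:ball}, together with the $o(\sqrt k)$ error from applying the conjecture to $X_1$ and the fact that $k=\ceil{k_0/t}$ differs from $k_0/t$ by $O(1)$, all become negligible after division by $\sqrt{k_0}$, so that they do not disturb the optimization over $t$. A secondary point worth isolating as a lemma is the packing fact itself --- that a finite-volume open subset of a symplectic four-manifold is exhausted, up to a set of arbitrarily small measure, by a disjoint union of symplectically embedded balls --- which is standard but should be stated explicitly.
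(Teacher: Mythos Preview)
Your proof is correct and follows the same skeleton as the paper's: pack the interior (respectively, the complement) with standard pieces whose ECH capacities are known, invoke monotonicity and the disjoint-union formula, and extract the volume asymptotics. The differences are only in packaging. The paper packs with (smoothed) polydisks rather than balls, and instead of carrying out the Cauchy--Schwarz optimization over the split $(k_i)$ by hand each time, it cites Proposition~\ref{prop:somewhere}, which already encodes that optimization: once the conjecture is known for each polydisk, it holds for their disjoint union, and then a single application of Theorem~\ref{thm:main} gives the liminf bound in (a). For (b), the paper's one-line proof (``fill the complement with polydisks and apply Theorem~\ref{thm:main} again'') unpacks to exactly your argument: from $c_{k_0}(X_0)+c_{k-k_0}(P)\le c_k(X_1)$ with the conjecture known for both $P$ and $X_1$, optimize over $k_0/k$; the paper just suppresses the minimization over $t$ that you write out.

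What your explicit version buys is that the error terms are visible and verified; what the paper's version buys is brevity, since the Lagrange-multiplier step is done once in Proposition~\ref{prop:somewhere}. One small remark: in (b) you could avoid singling out the degenerate case $V_0=V_1$ by letting $\epsilon$ range over $(0,W]$ when $W>0$ and simply skipping the ball-packing when $W=0$; and strictly speaking the hypothesis says ``embedded into $X_1$'' rather than ``into $\op{int}(X_1)$'', so you should note that one can shrink $X_0$ slightly (or compose with the flow of a Liouville vector field on $X_1$) before applying Theorem~\ref{thm:main}.
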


\begin{proof}
  (a) For any $\epsilon>0$, by using a finite cover of $X_0$ by
  Darboux charts, we can fill all but $\epsilon$ of the volume of
  $(X_0,\omega_0)$ with products of smoothed squares which are
  symplectomorphic to polydisks.  Since
  Conjecture~\ref{conj:Liouville} is true for a polydisk, by
  Proposition~\ref{prop:somewhere} (applied to boundaries of smoothed
  polydisks) it is also true for a disjoint union of polydisks.
  Applying Theorem~\ref{thm:main} then gives
\[
\liminf_{k\to \infty}\frac{c_k(X_0,\omega_0)^2}{k} \ge
4\left(\op{vol}(X_0,\omega_0) - \epsilon\right).
\]
Since $\epsilon>0$ was abitrary, this proves \eqref{eqn:liminf}.

(b) 
Fill all but volume $\epsilon$ of the complement of $X_0$ in $X_1$
by polydisks and apply Theorem~\ref{thm:main} again.
\end{proof}

\subsection{A more general volume conjecture}

Conjecture~\ref{conj:distinguishedvolume} is a special case of the
following more general conjecture.  Let $(Y,\lambda)$ be a closed
contact 3-manifold.  Recall that if $\Gamma\in H_1(Y)$ is such that
$c_1(\xi)+2\op{PD}(\Gamma)\in H^2(Y;\Z)$ is torsion, then
$ECH(Y,\lambda,\Gamma)$ has a relative $\Z$-grading, which can be
arbitrarily normalized to an absolute $\Z$-grading.  We then denote
the grading of a generator $x$ by $I(x)\in\Z$.  Recall the notation
$c_\sigma$ from Definition~\ref{def:csigma}.

\begin{conjecture}
\label{conj:general}
Let $(Y,\lambda)$ be a closed connected contact 3-manifold, let
$\Gamma\in H_1(Y)$, suppose that $c_1(\xi)+2\op{PD}(\Gamma)\in
H^2(Y;\Z)$ is torsion, and choose an absolute $\Z$-grading as above on
$ECH(Y,\lambda,\Gamma)$.  Let $\{\sigma_k\}_{k=1,2,\ldots}$ be a
sequence of elements of $ECH(Y,\lambda,\Gamma)$ with definite gradings
satisfying $\lim_{k\to\infty}I(\sigma_k)=\infty$.  Then
\begin{equation}
\label{eqn:general}
\lim_{k\to\infty}\frac{c_{\sigma_k}(Y,\lambda)^2}{I(\sigma_k)}=
\op{vol}(Y,\lambda).
\end{equation}
\end{conjecture}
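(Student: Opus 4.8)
The plan is to establish the two asymptotic inequalities $\limsup_k c_{\sigma_k}(Y,\lambda)^2/I(\sigma_k)\le\op{vol}(Y,\lambda)$ and $\liminf_k c_{\sigma_k}(Y,\lambda)^2/I(\sigma_k)\ge\op{vol}(Y,\lambda)$ separately. First I would reduce to the case where $\lambda$ is nondegenerate, using the scaling isomorphism \eqref{eqn:scaling} and the comparison of supremum and infimum over perturbations as in Lemma~\ref{lem:ckpm}, and reduce to the case where each $\sigma_k$ is homogeneous of grading $I_k\to\infty$. Fixing the absolute $\Z$-grading and a homogeneous action-minimizing basis $\{\tau_j\}$ (the analogue of Lemma~\ref{lem:amb} in the relevant $\Gamma$-sector), part~(b) of that lemma shows that $c_{\sigma_k}(Y,\lambda)$ equals $c_{\tau_j}(Y,\lambda)$ for some basis element $\tau_j$ with $I(\tau_j)=I_k$; hence $c_{\sigma_k}(Y,\lambda)$ is squeezed between $\min\{c_\sigma(Y,\lambda)\mid 0\ne\sigma\in ECH_{I_k}\}$ and $\max\{c_{\tau_j}(Y,\lambda)\mid I(\tau_j)=I_k\}$. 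So it suffices to prove that both of these are $\sqrt{I\op{vol}(Y,\lambda)}\,(1+o(1))$ as $I\to\infty$ --- equivalently, writing $m(I)$ for the minimal symplectic action of an ECH generator of grading $I$, and $M(I)$ for the least $L$ such that every nonzero class of grading $\le I$ lies in the image of $ECH^L\to ECH$, it suffices to prove
\[
m(I) \;=\; M(I) \;=\; \sqrt{I\op{vol}(Y,\lambda)}\;(1+o(1)),
\]
since the liminf is controlled by $m$ and the limsup by $M$.

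The heart of the matter is the statement about $m(I)$, an asymptotic ``isoperimetric'' inequality for the ECH index: every generator $\alpha=\prod_i\gamma_i^{m_i}$ should satisfy $I(\alpha)\le \mc{A}(\alpha)^2/\op{vol}(Y,\lambda)+o(\mc{A}(\alpha)^2)$, with this bound asymptotically sharp in each grading. I would attack this through the ECH index formula $I(\alpha)=c_\tau(\alpha)+Q_\tau(\alpha)+\sum_i\sum_{k=1}^{m_i}\op{CZ}_\tau(\gamma_i^k)$: the relative self-intersection term $Q_\tau$ and the Conley-Zehnder sums are each, to leading order, quadratic forms in the weighted multiplicities $(m_i\mc{A}(\gamma_i))$, and the constant $\op{vol}(Y,\lambda)=\int_Y\lambda\wedge d\lambda$ should emerge as the optimal constant in the resulting Cauchy-Schwarz/isoperimetric estimate once one controls the asymptotic distribution of Reeb orbits and their rotation numbers. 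The flat $T^3$ computation of \S\ref{sec:dest3} is the model case --- there this is exactly the Euclidean isoperimetric inequality $\ell(\Lambda)^2\ge 4\pi A(\Lambda)$ for lattice polygons --- and the ellipsoid computation, where $I(\gamma_1^m\gamma_2^n)\approx(am+bn)^2/ab$, is the sharp case. Granting the $m(I)$ estimate, the lower bound follows quickly: if $c_{\sigma_k}=c_{\tau_j}$ with $I(\tau_j)=I_k$, then for $\epsilon>0$ the class $\tau_j$ is represented by a homogeneous cycle of action $<c_{\sigma_k}+\epsilon$, which must contain a generator of grading $I_k$, so $I_k\le(c_{\sigma_k}+\epsilon)^2/\op{vol}(Y,\lambda)+o(I_k)$ using the a priori bound $c_{\sigma_k}=O(\sqrt{I_k})$.

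The main obstacle is the upper bound, i.e.\ controlling $M(I)$: one must show that \emph{every} homology class of grading $\le I$ has a representative of action at most $\sqrt{I\op{vol}(Y,\lambda)}\,(1+o(1))$, which is precisely the assertion that the action-minimizing generators in each grading span the ECH of $(Y,\lambda)$. This requires genuine information about the ECH differential --- it is not enough to know that many low-action generators exist; one must rule out that they are all exact or cancel in homology --- and I expect this is where the serious analysis lies. I see two plausible routes: a direct count of pseudoholomorphic curves, using the sharp index estimate to constrain the geometry of the curves contributing to the differential in the relevant grading and action range; or transporting the problem, via Taubes's isomorphism \eqref{eqn:echswf} together with the grading and action/energy correspondences in his work, to Seiberg-Witten Floer cohomology, where the analogue of the upper bound should follow from Weyl-type asymptotics for the spectral flow of the family of Dirac operators entering the grading, the relevant volume being that of the metric on $Y$ determined by $\lambda$.

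Finally, I would exploit the cases already understood. The volume property holds for ellipsoids and polydisks by explicit computation, is preserved under disjoint unions by Proposition~\ref{prop:somewhere}, and propagates through weakly exact cobordisms via Theorem~\ref{thm:cob} and Proposition~\ref{prop:cobtilde}. In particular, packing a piece $[0,T]\times Y$ of the symplectization by disjoint smoothed polydisks of total volume close to $\op{vol}([0,T]\times Y)=\tfrac12(e^{2T}-1)\op{vol}(Y,\lambda)$ --- possible by a Darboux-chart argument as in the proof after Conjecture~\ref{conj:Liouville} --- produces a weakly exact cobordism from $(Y,e^T\lambda)$ to $(Y,\lambda)\sqcup\coprod_i\partial P_i$, so Propositions~\ref{prop:cobtilde} and \ref{prop:dsdu} give $\liminf_k c_k(Y,\lambda)^2/k\ge 2(1-e^{-2T})\op{vol}(Y,\lambda)$, and $T\to\infty$ yields the lower bound for the distinguished ECH spectrum whenever $c(\xi)\ne 0$. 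The remaining work --- the general index estimate for $m(I)$ and the upper bound on $M(I)$ --- is what I expect to be hard.
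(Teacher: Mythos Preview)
The statement you are attempting to prove is \emph{Conjecture~\ref{conj:general}}, and the paper does not prove it. After stating the conjecture, the paper only remarks that its validity is independent of the choice of absolute $\Z$-grading, and records that ``Cliff Taubes has suggested to me that it may be possible to prove Conjecture~\ref{conj:general} using the spectral flow estimates involved in the proof of \eqref{eqn:echswf}.'' There is no proof to compare your proposal against.

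That said, your proposal is a reasonable research outline rather than a proof, and you correctly flag where the real difficulties lie. Your reduction to the quantities $m(I)$ and $M(I)$ is sound, and the model cases you cite (ellipsoids, $T^3$) are exactly the evidence the paper presents. Notably, one of the two routes you propose for the hard upper bound --- transporting the problem via Taubes's isomorphism to Seiberg-Witten Floer cohomology and using Weyl-type spectral flow asymptotics --- is precisely the approach the paper attributes to Taubes as a suggestion. Your other route, a direct holomorphic-curve argument controlling the ECH differential, is not mentioned in the paper and would be genuinely new if it worked, but you give no indication of how to carry it out. The packing argument at the end of your proposal recovers only the lower bound for the \emph{distinguished} spectrum under the hypothesis $c(\xi)\neq 0$, which the paper already proves (Proposition following Conjecture~\ref{conj:Liouville}); it does not address the general $\Gamma$ or the upper bound, as you acknowledge.

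In short: there is no gap to diagnose because you have not claimed a proof; you have correctly identified the conjecture as open and sketched the same line of attack the paper itself points to.
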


Note that the validity of \eqref{eqn:general} does not depend on the
choice of absolute $\Z$-grading.  Cliff Taubes has suggested to me
that it may be possible to prove Conjecture~\ref{conj:general} using
the spectral flow estimates involved in the proof of
\eqref{eqn:echswf}.

\end{document}